    \setlist[enumerate,1]{nosep,font=\normalfont}
\pgfplotsset{compat=newest,compat/show suggested version=false}
\numberwithin{equation}{section}
\newcommand{\R}{\mathbb{R}}
\newcommand{\Q}{\mathbb{Q}}
\newcommand{\Z}{\mathbb{Z}}
\renewcommand{\P}{\mathcal{P}}
\newcommand{\diam}{\mathrm{diam}}
\newcommand{\supp}{\mathrm{supp}}
\newcommand{\Rh}{\mathrm{Rh}}
\newcommand{\Up}{\mathrm{Up}}
\newcommand{\pert}{\mathrm{pert}}
\newcommand{\descent}{\textsc{Descending Local Search}}
\newcommand{\ascent}{\textsc{Ascending Local Search}}
\newcommand{\supermaj}{\prec^{\mathrm{w}}}
\newcommand{\submaj}{\prec_{\mathrm{w}}}
\newcommand{\maj}{\prec}
\newcommand{\floor}[1]{\left\lfloor #1\right\rfloor}
\newcommand{\ceil}[1]{\left\lceil #1\right\rceil}
\newcommand{\spn}{\textrm{span}}
\renewcommand{\d}{d^*}
\renewcommand{\emptyset}{\varnothing}
\newcommand{\st}{:}
\renewcommand{\mod}{\bmod}
\newcommand{\spec}{\sigma^*}
\newcommand{\cw}[3]{#1^{cw}\left\{#2,\ldots,#3\right\}}
\newcommand{\mult}[2]{\mu\left(#1,#2\right)}
\definecolor{mycolor}{RGB}{252,150,255}
\newtheorem{theorem}{Theorem}[section]
\newtheorem{lemma}[theorem]{Lemma}
\newtheorem{corollary}[theorem]{Corollary}
\newtheorem{proposition}[theorem]{Proposition}
\theoremstyle{definition}
\newtheorem{question}[theorem]{Question}
\newtheorem{remark}[theorem]{Remark}
\newtheorem{definition}[theorem]{Definition}
\newtheorem{example}[theorem]{Example}
\definecolor{gren}{RGB}{20, 140, 20}
\thanks{All three authors were partially supported by a Virginia Commonwealth University Seed Award during 2023 and 2024.}
\date{\today}
\begin{document}

\title{Sets of vertices with extremal energy}

\author[Neal Bushaw]{Neal Bushaw}
\address[Neal Bushaw]{ 
Virginia Commonwealth University,
Department of Mathematics and Applied Mathematics,
1015 Floyd Avenue, PO Box 842014, Richmond, Virginia 23284, United States
} 
\email[N. ~Bushaw]{nobushaw@vcu.edu} 
\urladdr{http://thenealon.github.io/}

\author[Brent Cody]{Brent Cody}
\address[Brent Cody]{ 
Virginia Commonwealth University,
Department of Mathematics and Applied Mathematics,
1015 Floyd Avenue, PO Box 842014, Richmond, Virginia 23284, United States
} 
\email[B. ~Cody]{bmcody@vcu.edu} 
\urladdr{http://www.people.vcu.edu/~bmcody/}

\author[Chris Leffler]{Chris Leffler}
\address[Chris Leffler]{ 
Virginia Commonwealth University,
Department of Mathematics and Applied Mathematics,
1015 Floyd Avenue, PO Box 842014, Richmond, Virginia 23284, United States
} 
\email[C. ~Leffler]{lefflerc@vcu.edu} 

\thanks{}


\begin{abstract}
We define various notions of energy of a set of vertices in a graph, which generalize two of the most widely studied graphical indices: the Wiener index and the Harary index. We provide a new proof of a result due to Douthett and Krantz, which says that for cycles, the sets of vertices which have minimal energy among all sets of the same size are precisely the \emph{maximally even sets}, as defined in Clough and Douthett's work on music theory. Generalizing a theorem of Clough and Douthett, we prove that a finite, simple, connected graph is distance degree regular if and only if whenever a set of vertices has minimal energy, its complement also has minimal energy. We also provide several characterizations of sets of vertices in finite paths and cycles for which the sum of all pairwise distances between vertices in the set is maximal among all sets of the same size.
\end{abstract}

\subjclass[2020]{05C09, 05C12, 05C35, 05C69}

\keywords{maximally even, Wiener index, Harary index}

\maketitle


\section{Introduction}\label{section_intro}

If a finite number of unit point charges are constrained to move on the surface of a sphere, the electrostatic force will be repulsive, and in the presence of an appropriate viscous force, the system will evolve so that as the kinetic energy dissipates, the positions of the particles will approach those of a configuration of minimal potential energy. The task of identifying such minimal energy configurations on the $2$-sphere $S^2\subseteq\mathbb{R}^3$, known as Thomson's problem \cite{MR2555698,MR1439152,MR3047910}, has proven to be quite difficult and is still largely unfinished with exact solutions known in only a handful of cases. A variation of Thomson's problem appears 7th on Steve Smale's list \cite{MR1754762, MR1631413} of mathematical problems for the 21st century. Generalizations of Thomson's problem to higher dimensions, and to interactions other than the electrostatic force, have deep connections \cite{MR2257398} with the widely popular subject of sphere packing \cite{MR1662447}. In this article, we introduce a discrete version of this kind of energy minimization problem in which particles are constrained to the vertices of a finite connected graph and distance calculations are done in an associated finite metric space.


Suppose $G$ is a finite simple connected graph. For vertices $u$ and $v$ in $G$, the \emph{distance} from $u$ to $v$, denoted by $d(u,v)$, is the length of a shortest path from $u$ to $v$ in $G$. The \emph{diameter} of $G$, written $\diam(G)$, is the largest distance between two vertices in $V(G)$ (we use standard graph theoretic terminology and notation throughout, see e.g., \cite{MGT}). Suppose $g:[1,\infty)\to \R$ is a function. For a set $A$ of vertices in $G$, we let the \emph{$g$-energy of $A$ in $G$} be the quantity 
\[E_g(A)=\sum_{\{u,v\}\in\binom{A}{2}}g\left(d\left(u,v\right)\right),\]
where $\binom{A}{2}=\{X\subseteq A\st |A|=2\}$ and
where $E_g(A)=0$ if $|A|\leq 1$. When the context is clear, we refer to $E_g(A)$ as the \emph{energy of $A$}. By choosing $g$ appropriately, we obtain several well-studied graphical indices as $E_g(V(G))$. When $g$ is the identity function on $\{1,\ldots,\diam(G)\}$ the quantity $E_g(V(G))$ is the \emph{Wiener index of $G$}, denoted by $W(G)$, and when $g$ is the function defined by $g(r)=\frac{1}{r}$ the quantity $E_g(V(G))$ is the \emph{Harary index of $G$}, denoted by $H(G)$. Thus, for a set of vertices $A$, we define the \emph{Wiener index of $A$} (in $G$) to be $W(A)=E_g(A)$ where $g(r)=r$, and similarly, the \emph{Harary index of $A$} (in $G$) is $H(A)=E_g(A)$ where $g(r)=\frac{1}{r}$. Despite the fact that both the Wiener index \cite{MR1843259,MR1916949,MR1461039,MR3570470} and the Harary index \cite{MR1219576,MR2823920,Xu_Das_Trinajstic_harary_index} of graphs have been widely studied in both mathematics and chemistry, there seems to be little to no literature on generalizations of these indices to subsets of the vertex set of a graph, or generalizations involving functions $g$ other than the identity function and $g(r)=\frac{1}{r}$.

Suppose $G$ is a finite simple connected graph and let $A$ and $B$ be sets of vertices in $G$ with $|A|=|B|$. Then $W(A)>W(B)$ indicates that the average distance between pairs of elements of $A$ is greater than the average distance between pairs of elements of $B$. Furthermore, when $g:[1,\infty)\to \R$ is a strictly decreasing strictly convex function, such as $g(r)=\frac{1}{r}$, the expression defining $E_g(A)$ resembles the electric potential energy of a system of unit point charges, and $E_g(A)<E_g(B)$ indicates that the vertices in $A$ are ``more spread out'' than those of $B$. Hence, intuitively, when $E_g(A)=\min\{E_g(B)\st B\subseteq V(G)\land |B|=|A|\}$, the vertices in $A$ should be ``spread out as much as possible'' in $G$ (see Figure \ref{figure_examples} and Figure \ref{figure_mobius}). As we will see below $E_g$, where $g$ is strictly decreasing and strictly convex, tends to be a more discerning measure of how spread out a set of vertices is, as opposed to $W$, since it is often the case that there will be many sets of the same cardinality with the same Wiener index, but which all have different $E_g$-values (see Theorem \ref{theorem_min_energy_cycles} and Corollary \ref{corollary_relationship}). For example, up to rotations and reflections there are three maximizers of $W$ on the $8$-cycle $C_8$, see Figure \ref{figure_WI_max_c8}, all of which have different values of $E_g$ where $g(r)=\frac{1}{r}.$


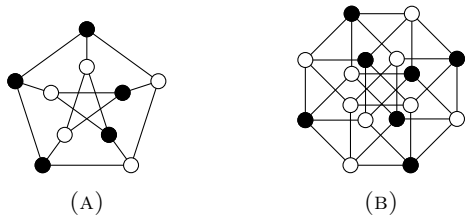
\begin{figure}
\centering
\begin{subfigure}{.3\textwidth}
    \centering
\begin{tikzpicture}[scale=0.5]

\coordinate (A) at (18:1);
\coordinate (B) at (90:1);
\coordinate (C) at (162:1);
\coordinate (D) at (234:1);
\coordinate (E) at (306:1);
\coordinate (F) at (18:2);
\coordinate (G) at (90:2);
\coordinate (H) at (162:2);
\coordinate (I) at (234:2);
\coordinate (J) at (306:2);

\foreach \x/\y in {F/G, G/H, H/I, I/J, J/F,  A/F, B/G, C/H, D/I, E/J,  A/C, B/D, C/E, D/A, E/B}
    \draw (\x) -- (\y);

\foreach \x in {A,B,C,D,E,F,G,H,I,J}
    \node[draw,circle,inner sep=2, fill=white] at (\x) {};

\foreach \x in {A,E,G,H,I}
    \node[draw,circle,inner sep=2, fill=black] at (\x) {};
    
\end{tikzpicture}
    \caption{\small }
    \label{figure_peter}
\end{subfigure}
\begin{subfigure}{.3\textwidth}
    \centering
\begin{tikzpicture}[scale = 0.4]

\coordinate (1) at (0,0,0);
\coordinate (2) at (2,0,0);
\coordinate (3) at (2,2,0);
\coordinate (4) at (0,2,0);
\coordinate (5) at (0,0,4);
\coordinate (6) at (2,0,4);
\coordinate (7) at (2,2,4);
\coordinate (8) at (0,2,4);

\coordinate (a) at (0+1.5,0-1.5,0);
\coordinate (b) at (2+1.5,0-1.5,0);
\coordinate (c) at (2+1.5,2-1.5,0);
\coordinate (d) at (0+1.5,2-1.5,0);
\coordinate (e) at (0+1.5,0-1.5,4);
\coordinate (f) at (2+1.5,0-1.5,4);
\coordinate (g) at (2+1.5,2-1.5,4);
\coordinate (h) at (0+1.5,2-1.5,4);

\foreach \i/\j in {1/2,2/3,3/4,4/1, 5/6,6/7,7/8,8/5, 1/5,2/6,3/7,4/8}
    \draw (\i) -- (\j);

\foreach \i/\j in {a/b,b/c,c/d,d/a, e/f,f/g,g/h,h/e, a/e,b/f,c/g,d/h}
    \draw (\i) -- (\j);

\foreach \i/\j in {1/a,2/b,3/c,4/d,5/e,6/f,7/g,8/h}
    \draw (\i) -- (\j);

\foreach \i in {1,2,3,4,5,6,7,8}
    \node[draw,circle,inner sep=2, fill=white] at (\i) {};

\foreach \i in {a,b,c,d,e,f,g,h}
    \node[draw,circle,inner sep=2, fill=white] at (\i) {};

\foreach \i in {2,4,5,7}
    \node[draw,circle,inner sep=2, fill=black] at (\i) {};

\foreach \i in {a,c,f}
    \node[draw,circle,inner sep=2, fill=black] at (\i) {};

\end{tikzpicture}
    \caption{\small }
    \label{figure_hyper}
\end{subfigure}
\caption{\small \small Examples of minimizers of $E_g$, where $g(r)=\frac{1}{r}$, in the Petersen graph (A) and the hypercube (B). 
}\label{figure_examples}
\end{figure}

\begin{definition} Suppose $G$ is a finite simple connected graph and  $F:P(V(G))\to\R$ is a real-valued function defined on the powerset of the vertex set of $G$. We say that a set of vertices $A$ is a \emph{global minimizer of $F$} (or just a \emph{minimizer of $F$}) if 
\[F(A)=\min\{F(B)\st B\subseteq V(G)\text{ and } |B|=|A|\},\]
and similarly, $A$ is a \emph{global maximizer of $F$} (or just a \emph{maximizer of $F$}) if 
\[F(A)=\max\{F(B)\st B\subseteq V(G)\text{ and } |B|=|A|\}.\]
\end{definition}
Let us note that one can determine the minimizers of $F$ on $G$ using the obvious brute force algorithm, which takes as input $(G,F,m)$, computes $F(X)$ for all $X\in\binom{V(G)}{m}$ and outputs the sets of $X$ for which $F(X)$ is minimal. After completing the present article the authors were made aware of the work of Barrett \cite{Barrett}, who considered the problem of finding minimizers of $E_g$ on various graphs in the particular case in which $g(r)=\frac{1}{r}$. For example, Barrett showed \cite[Section 3.3]{Barrett} that in this setting, the problem of finding minimizers of $E_g$ is NP-complete. Many of the results in the current article provide faster algorithms for determining the minimizers of functions of the form $E_g$ on various classes of graphs and various functions $g$.

\begin{definition}
For two sets $A$ and $B$ of vertices in $G$, we say that $B$ is a \emph{perturbation of $A$ in $G$} if $B=(A\setminus\{u\})\cup \{v\}$ for some $u\in A$ and some $v\in V(G)\setminus A$ with $\{u,v\}\in E(G)$. Let $\pert_G(A)$ denote the set of all perturbations of $A$ in $G$. We say that a set of vertices $A$ in $G$ is a \emph{local minimizer of $F$} if 
\[F(A)=\min\{F(B)\st B\in \pert_G(A)\},\]
and $A$ is a \emph{local maximizer of $F$} if
\[F(A)=\max\{F(B)\st B\in \pert_G(A)\}.\]
\end{definition}
Clearly every minimizer (maximizer) of $F$ is a local minimizer (maximizer), but, as demonstrated by the following example, the converse does not hold in general. However, in what follows, we prove that for paths (Proposition \ref{proposition_wiener_path}) and cycles (Theorem \ref{theorem_w}), every local maximizer of $W$ is a global maximizer of $W$.
\begin{figure}
\centering
\begin{subfigure}{.22\textwidth}
    \centering
\begin{tikzpicture}
\tikzset{highlight/.style={preaction={
draw,mycolor,-,
double=mycolor,
double distance=10\pgflinewidth,opacity=0.3
}}}

\def \rotation {360/7};
\def \diameter {1};

\foreach \i [count=\xi from 0, remember=\i as \iprev] in {0,...,4} {
    \node[draw,circle,inner sep=2] (\i) at ({\diameter*cos(360*\i/5-\rotation)},{(\diameter/1.5)*sin(360*\i/5-\rotation)}) {};
    \ifthenelse{\i>0}{
        \draw[-] (\iprev) -- (\i);}{}
        }

\foreach \i [count=\xi from 0, remember=\i as \iprev] in {0,...,4}{
  \node[draw,circle,inner sep=2, fill=white] (a\i) at ({\diameter*cos(360*\i/5-\rotation)},{(\diameter/1.5)*sin(360*\i/5-\rotation)+0.7}) {};

  \draw[-] (\i) -- (a\i);

  \ifthenelse{\i>0}{
    \draw[-] (a\iprev) -- (a\i);
  }{}

}

\draw[-] (0) -- (4);
\draw[-] (a0) -- (a4);

\foreach \i in {0,1,3}{
    \node[draw,circle,inner sep=2, fill=black] (b\i) at ({\diameter*cos(360*\i/5-\rotation)},{(\diameter/1.5)*sin(360*\i/5-\rotation)+0.7}) {};
}

\foreach \i in {2,4}{
    \node[draw,circle,inner sep=2,fill=black] (c\i) at ({\diameter*cos(360*\i/5-\rotation)},{(\diameter/1.5)*sin(360*\i/5-\rotation)}) {};
}

\end{tikzpicture}
    \caption{\small }
    \label{figure_locnotglo_b}
\end{subfigure}
\begin{subfigure}{.22\textwidth}
    \centering
\begin{tikzpicture}
\tikzset{highlight/.style={preaction={
draw,mycolor,-,
double=mycolor,
double distance=10\pgflinewidth,opacity=0.3
}}}

\def \rotation {360/7};
\def \diameter {1};

\foreach \i [count=\xi from 0, remember=\i as \iprev] in {0,...,4} {
    \node[draw,circle,inner sep=2] (\i) at ({\diameter*cos(360*\i/5-\rotation)},{(\diameter/1.5)*sin(360*\i/5-\rotation)}) {};
    \ifthenelse{\i>0}{
        \draw[-] (\iprev) -- (\i);}{}
        }

\foreach \i [count=\xi from 0, remember=\i as \iprev] in {0,...,4}{
  \node[draw,circle,inner sep=2, fill=white] (a\i) at ({\diameter*cos(360*\i/5-\rotation)},{(\diameter/1.5)*sin(360*\i/5-\rotation)+0.7}) {};

  \draw[-] (\i) -- (a\i);

  \ifthenelse{\i>0}{
    \draw[-] (a\iprev) -- (a\i);
  }{}

}

\draw[-] (0) -- (4);
\draw[-] (a0) -- (a4);

\foreach \i in {0,1,3}{
    \node[draw,circle,inner sep=2, fill=black] (b\i) at ({\diameter*cos(360*\i/5-\rotation)},{(\diameter/1.5)*sin(360*\i/5-\rotation)+0.7}) {};
}

\foreach \i in {0,3}{
    \node[draw,circle,inner sep=2,fill=black] (c\i) at ({\diameter*cos(360*\i/5-\rotation)},{(\diameter/1.5)*sin(360*\i/5-\rotation)}) {};
}

\end{tikzpicture}
    \caption{\small }
    \label{figure_locnotglo_a}
\end{subfigure}
\begin{subfigure}{.22\textwidth}
    \centering
\begin{tikzpicture}
\tikzset{highlight/.style={preaction={
draw,mycolor,-,
double=mycolor,
double distance=10\pgflinewidth,opacity=0.3
}}}

\def \rotation {360/5+180};
\def \diameter {1};

\foreach \i [count=\xi from 0, remember=\i as \iprev] in {0,...,3} {
    \node[draw,circle,inner sep=2] (\i) at ({\diameter*cos(360*\i/4-\rotation)},{(\diameter/1.5)*sin(360*\i/4-\rotation)}) {};
    \ifthenelse{\i>0}{
        \draw[-] (\iprev) -- (\i);}{}
        }

\foreach \i [count=\xi from 0, remember=\i as \iprev] in {0,...,3}{
  \node[draw,circle,inner sep=2, fill=white] (a\i) at ({\diameter*cos(360*\i/4-\rotation)},{(\diameter/1.5)*sin(360*\i/4-\rotation)+0.7}) {};

  \draw[-] (\i) -- (a\i);

  \ifthenelse{\i>0}{
    \draw[-] (a\iprev) -- (a\i);
  }{}

}

\draw[-] (0) -- (3);
\draw[-] (a0) -- (a3);

\foreach \i in {0,2}{
    \node[draw,circle,inner sep=2, fill=black] (b\i) at ({\diameter*cos(360*\i/4-\rotation)},{(\diameter/1.5)*sin(360*\i/4-\rotation)+0.7}) {};
}

\foreach \i in {1,3}{
    \node[draw,circle,inner sep=2,fill=black] (c\i) at ({\diameter*cos(360*\i/4-\rotation)},{(\diameter/1.5)*sin(360*\i/4-\rotation)}) {};
}

\end{tikzpicture}
    \caption{\small }
    \label{figure_locnotglo_c}
\end{subfigure}
\begin{subfigure}{.22\textwidth}
    \centering
\begin{tikzpicture}
\tikzset{highlight/.style={preaction={
draw,mycolor,-,
double=mycolor,
double distance=10\pgflinewidth,opacity=0.3
}}}

\def \rotation {360/5+180};
\def \diameter {1};

\foreach \i [count=\xi from 0, remember=\i as \iprev] in {0,...,3} {
    \node[draw,circle,inner sep=2] (\i) at ({\diameter*cos(360*\i/4-\rotation)},{(\diameter/1.5)*sin(360*\i/4-\rotation)}) {};
    \ifthenelse{\i>0}{
        \draw[-] (\iprev) -- (\i);}{}
        }

\foreach \i [count=\xi from 0, remember=\i as \iprev] in {0,...,3}{
  \node[draw,circle,inner sep=2, fill=white] (a\i) at ({\diameter*cos(360*\i/4-\rotation)},{(\diameter/1.5)*sin(360*\i/4-\rotation)+0.7}) {};

  \draw[-] (\i) -- (a\i);

  \ifthenelse{\i>0}{
    \draw[-] (a\iprev) -- (a\i);
  }{}

}

\draw[-] (0) -- (3);
\draw[-] (a0) -- (a3);

\foreach \i in {0,1}{
    \node[draw,circle,inner sep=2, fill=black] (b\i) at ({\diameter*cos(360*\i/4-\rotation)},{(\diameter/1.5)*sin(360*\i/4-\rotation)+0.7}) {};
}

\foreach \i in {2,3}{
    \node[draw,circle,inner sep=2,fill=black] (c\i) at ({\diameter*cos(360*\i/4-\rotation)},{(\diameter/1.5)*sin(360*\i/4-\rotation)}) {};
}

\end{tikzpicture}
    \caption{\small }
    \label{figure_locnotglo_d}
\end{subfigure}
\caption{\small \small The shaded vertices indicate a global maximizer of $W$ (A), a local maximizer of $W$ that is not global (B), a global minimizer of $E_g$ (C) and a local minimizer of $E_g$ that is not global (D), where $g(r)=\frac{1}{r}$ (see Example \ref{example_locnotglo}).}\label{figure_locnotglo}
\end{figure}
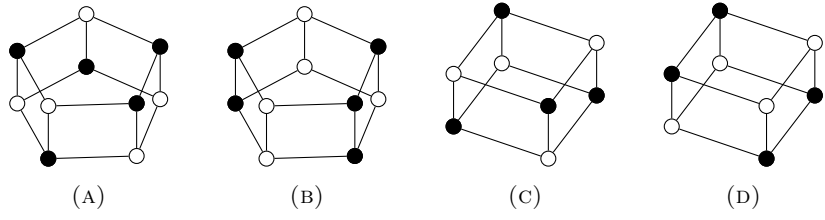 \begin{example}\label{example_locnotglo}
Let $A$ and $B$ be the sets of vertices of the cylinder graph $G=P_2\, \Box\, C_5$ depicted in Figure \ref{figure_locnotglo_b} and Figure \ref{figure_locnotglo_a}, respectively. One can readily check that $A$ is a global maximizer of $W$ with $W(A)=21$ and $B$ is a local maximizer of $W$ with $W(B)=20$. Thus, $B$ is a local maximizer of $W$ that is not a global maximizer of $W$. Let $C$ and $D$ be the sets of vertices of the cube graph $Q_8$ depicted in Figure \ref{figure_locnotglo_c} and Figure \ref{figure_locnotglo_d}, respectively. Then, taking $g(r)=\frac{1}{r}$, one can check that $C$ is a global minimizer of $E_g$ with $E_g(G)=3$ and $D$ is a local minimizer of $E_g$ with $E_g(D)=\frac{11}{3}$. Hence $D$ is a local minimizer of $E_g$ that is not a global minimzer of $E_g$.
\end{example}

Let us point out that the usual notion of \emph{local search} provides a relatively fast way to find local minimizers (maximizers) of $F$. For example, we define an algorithm $\descent$, which takes as input a finite simple connected graph $G$, a real-valued function $F$ defined on the powerset of the vertices of $G$ together with a set of vertices $A$ in $G$, and outputs a local minimizer of $F$ of cardinality $|A|$ after finitely many steps.

\bigskip

\noindent $\descent(G,F,A)$
\begin{enumerate}
\item Let $X=A$.
\item\label{item_list_perts} Let $\vec{L}$ be the list of all perturbations of $X$ in $G$.
\item If $F(\vec{L}(i))<F(X)$ for some $i<\textrm{length}(\vec{L})$ then set $X=\vec{L}(j)$ where $j$ is the least such $i$ and go to (\ref{item_list_perts}).
\item Otherwise, if $F(\vec{L}(i))\geq F(X)$ for all $i<\textrm{length}(\vec{L})$ then return $X$.
\end{enumerate}

\bigskip

\noindent An algorithm $\ascent$ for finding local maximizers of such functions $F$ can be defined in a similar maner.

In Section \ref{section_minimizers_on_cycles}, we provide a new proof of a slight modification of a result due to Douthett and Krantz, which states that for any strictly decreasing strictly convex function $g:\{1,\ldots,\diam(C_n)\}\to\R$ a set of vertices $A$ of an $n$-cycle $C_n$ is a minimizer of $E_g$ if and only if it is \emph{maximally even}\footnote{Let us note that maximally even sets have become popularized due to Toussaint's discovery \cite{MR2212108} that such sets, which Toussint called \emph{Euclidean rhythms}, manifest as rhythms in musical traditions from all over the world; it is worth noting that Toussaint's work has led to the widespread use of \emph{Euclidean rhythm generators} by composers and electronic music producers in many musical genres. As an additional musical connection with the present work, let us note that the first and second author, together with Luke Freeman and Tobias Whitaker \cite{BCFW}, found that certain minimizers of $E_g$ on M\"obius ladder graphs correspond to well-known rhythms in one of Puerto Rico's oldest musical traditions of African origin called bomba. For example, the shaded set of vertices depicted in Figure \ref{figure_bomba_yuba} corresponds to the bomba yuba and that depicted in Figure \ref{figure_bomba_sica} corresponds to the bomba sic\'a (see \cite[Example 3]{BCFW} for details). 
} (see Section \ref{section_me}), as defined in Clough and Douthett's work on music theory. Our proof uses the powerful notion of \emph{majorization} (see Section \ref{section_majorization}), which was introduced independently by Schur \cite{MR0462892, Schur1923} and by Hardy, Littlewood and P\'{o}lya \cite{HLP}. Let us note that the majorization order was previously applied in graph theory by several authors. Folkman and Fulkerson \cite{MR262112} (and others \cite{MR1782037, MR1771389}) used the majorization order on tuples of cardinalities of color classes to study the \emph{feasibility} of certain edge colorings. Dahl \cite{MR2364586} used the majorization order on tuples of distances of trees to define the \emph{majorization-center} of a tree.

Clough and Douthett established many salient properties of maximally even subsets of $C_n$. For example, they proved that the complement of a maximally even subset of $C_n$ is again maximally even (see Theorem \ref{theorem_complements_cd} or \cite[Theorem 3.3]{CloughDouthett}). In Section \ref{section_complements}, we generalize Clough and Douthett's result by proving that the following three conditions are equivalent:
\begin{enumerate}
\item $G$ is \emph{distance degree regular} (see Definition \ref{definition_ddr}).
\item For every function $g:\{1,\ldots,\diam(G)\}\to\R$, the complement of every minimizer of $E_g$ is again a minimizer of $E_g$. 
\item For some function $g:\{1,\ldots,\diam(G)\}\to \R$ such that the set of real numbers $\{g(1),\ldots,g(\diam(G))\}$ is linearly independent over $\Q$, the complement of every minimizer of $E_g$ is a minimizer of $E_g$.
\end{enumerate}
In fact, one can replace the word ``minimizer'' can be changed to ``local minimizer,'' ``maximizer'' or ``local maximizer'' and the equivalence still holds. In particular, since the M\"obius ladder on $2n$ vertices is distance degree regular, the complement of any minimizer of $E_g$ on $M_{2n}$ will again be a minimizer of $E_g$ (see Figure \ref{figure_mobius}).

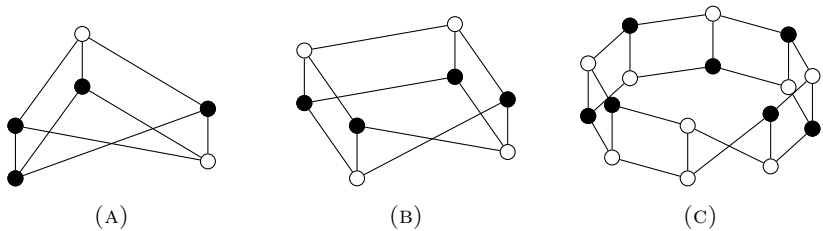
\begin{figure}
\centering
\begin{subfigure}{.3\textwidth}
    \centering
\begin{tikzpicture}
\tikzset{highlight/.style={preaction={
draw,mycolor,-,
double=mycolor,
double distance=10\pgflinewidth,opacity=0.3
}}}

\foreach \i [count=\xi from 0, remember=\i as \iprev] in {0,...,2} {
    \node[draw,circle,inner sep=2] (\i) at ({1.5*cos(360*\i/3-360/18)},{0.75*sin(360*\i/3-360/18)}) {};
    \ifthenelse{\i>0}{
        \draw[-] (\iprev) -- (\i);}{}
        }

\foreach \i [count=\xi from 0, remember=\i as \iprev] in {0,...,2}{
  \node[draw,circle,inner sep=2, fill=white] (a\i) at ({1.5*cos(360*\i/3-360/18)},{0.75*sin(360*\i/3-360/18)+0.7}) {};

  \draw[-] (\i) -- (a\i);

  \ifthenelse{\i>0}{
    \draw[-] (a\iprev) -- (a\i);
  }{}

}

\draw[-] (0) -- (a2);
\draw[-] (a0) -- (2);

\foreach \i in {0,2}{
    \node[draw,circle,inner sep=2, fill=black] (b\i) at ({1.5*cos(360*\i/3-360/18)},{0.75*sin(360*\i/3-360/18)+0.7}) {};
}

\foreach \i in {1,2}{
    \node[draw,circle,inner sep=2,fill=black] (c\i) at ({1.5*cos(360*\i/3-360/18)},{0.75*sin(360*\i/3-360/18)}) {};
}

\end{tikzpicture}
    \caption{\small }
    \label{figure_bomba_yuba}
\end{subfigure}
\begin{subfigure}{.3\textwidth}
    \centering
\begin{tikzpicture}
\tikzset{highlight/.style={preaction={
draw,mycolor,-,
double=mycolor,
double distance=10\pgflinewidth,opacity=0.3
}}}

\foreach \i [count=\xi from 0, remember=\i as \iprev] in {0,...,3} {
    \node[draw,circle,inner sep=2] (\i) at ({1.5*cos(360*\i/4-360/14)},{0.75*sin(360*\i/4-360/14)}) {};
    \ifthenelse{\i>0}{
        \draw[-] (\iprev) -- (\i);}{}
        }

\foreach \i [count=\xi from 0, remember=\i as \iprev] in {0,...,3}{
  \node[draw,circle,inner sep=2, fill=white] (a\i) at ({1.5*cos(360*\i/4-360/14)},{0.75*sin(360*\i/4-360/14)+0.7}) {};

  \draw[-] (\i) -- (a\i);

  \ifthenelse{\i>0}{
    \draw[-] (a\iprev) -- (a\i);
  }{}

}

\draw[-] (0) -- (a3);
\draw[-] (a0) -- (3);

\foreach \i in {0,3}{
    \node[draw,circle,inner sep=2, fill=black] (b\i) at ({1.5*cos(360*\i/4-360/14)},{0.75*sin(360*\i/4-360/14)+0.7}) {};
}

\foreach \i in {1,2}{
    \node[draw,circle,inner sep=2,fill=black] (c\i) at ({1.5*cos(360*\i/4-360/14)},{0.75*sin(360*\i/4-360/14)}) {};
}

\end{tikzpicture}
    \caption{\small }
    \label{figure_bomba_sica}
\end{subfigure}
\begin{subfigure}{.3\textwidth}
    \centering
\begin{tikzpicture}

\foreach \i [count=\xi from 0, remember=\i as \iprev] in {0,...,7} {
    \node[draw,circle,inner sep=2] (\i) at ({1.5*cos(360*\i/8-360/7)},{0.75*sin(360*\i/8-360/7)}) {};
    \ifthenelse{\i>0}{
        \draw[-] (\iprev) -- (\i);}{}
        }

\foreach \i [count=\xi from 0, remember=\i as \iprev] in {0,...,7}{
  \node[draw,circle,inner sep=2, fill=white] (a\i) at ({1.5*cos(360*\i/8-360/7)},{0.75*sin(360*\i/8-360/7)+0.7}) {};

  \draw[-] (\i) -- (a\i);

  \ifthenelse{\i>0}{
    \draw[-] (a\iprev) -- (a\i);
  }{}

}

\draw[-] (0) -- (a7);
\draw[-] (a0) -- (7);

\foreach \i in {0,2,4,6}{
    \node[draw,circle,inner sep=2, fill=black] (b\i) at ({1.5*cos(360*\i/8-360/7)},{0.75*sin(360*\i/8-360/7)+0.7}) {};
}

\foreach \i in {1,3,5}{
    \node[draw,circle,inner sep=2,fill=black] (c\i) at ({1.5*cos(360*\i/8-360/7)},{0.75*sin(360*\i/8-360/7)}) {};
}

\end{tikzpicture}
    \caption{\small }
    \label{}
\end{subfigure}
\caption{\small \small Minimizers of $E_g$ on various M\"obius ladders where $g(r)=\frac{1}{r}$.}\label{figure_mobius}
\end{figure}

In Section \ref{section_wiener}, we turn our attention to the special case of $E_g$ in which $g(r)=r$, namely, the Wiener index. As mentioned above, in this case, rather than writing $E_g(A)$, where $A$ is a set of vertices, we will write
\[W(A)=\sum_{\{u,v\}\in\binom{A}{2}}d(u,v).\]
In Proposition \ref{proposition_wiener_path}, we provide a characterization of the maximizers of $W$ and the local maximizers of $W$ on paths, which are, in fact, one and the same. In Theorem \ref{theorem_w}, on the cycle $C_n$, we show that the maximizers of $W$ coinside with the local maximizers of $W$, and provide a characterization of such maximizers in terms of the chromatic and diatonic distances studied by Clough and Douthett. We note that, as a consequence of these results, on paths and cycles, the algorithm $\ascent$, defined above, produces maximizers of $W$ (see Corollary \ref{corollary_ascent_on_paths} and Corollary \ref{corollary_ascent_on_cycles}). Finally, generalizing a result of Jiang \cite{MR2394474}, in Theorem \ref{theorem_balanced}, we characterize the maximizers of the Wiener index on $C_n$ using two balanced conditions. Let us note that the notion of \emph{balanced} set of vertices in $C_n$ (Definition \ref{definition_balanced}(1)), which is a straightforward generalization of a notion used by Jiang, is not sufficient for characterizing maximizers of $W$ on $C_n$ (this follows from Theorem \ref{theorem_balanced}), and for this reason we introduce the notion of \emph{weakly balanced} set of vertices in $C_n$ (Definition \ref{definition_balanced}(2)). Let us emphasize that the results of Section \ref{section_wiener}, together with Theorem \ref{theorem_min_energy_cycles}, establishes that for cycles, maximizing with Wiener index $W$ is not the same as minimizing the Harray index $H$: for example, there are many Wiener index maximizers of W on $C_7$ and $C_8$ (see Figure \ref{figure_WI_max_c7} and Figure \ref{figure_WI_max_c8}) that are not maximially even sets.

In Section \ref{section_future}, we discuss a few open questions.

\section{Minimizers on cycles}\label{section_minimizers_on_cycles}

\subsection{Maximal evenness and J-representations}\label{section_me}

Suppose $u$ and $v$ are vertices in $C_n$. Following the convention used in \cite{CloughDouthett,MR2512671,MR2366388,MR2408358,MR1643317}, we will typically assume that the vertices of $C_n$ are arranged around a circle in clockwise order (see Figure \ref{figure_ME_set}). The \emph{clockwise distance} from $u$ to $v$, denoted by $d^*(u,v)$, is the least non-negative integer congruent to $v-u$ modulo $n$. As defined in Section \ref{section_intro}, the \emph{distance} (or the \emph{geodesic distance}) from $u$ to $v$, denoted by $d(u,v)$ is the length of the shortest path from $u$ to $v$. The unique clockwise path from $u$ to $v$ in $C_n$ is denoted by $\cw{C_n}{u}{v}$ and is sometimes referred to as \emph{the interval from $u$ to $v$}. The \emph{chromatic length} of the interval $\cw{C_n}{u}{v}$ is defined to be $d^*(u,v)$. Notice that $|\cw{C_n}{u}{v}|=d^*(u,v)+1$.

For any set $A=\{a_0,\ldots,a_{m-1}\}$ of vertices in $C_n$ with $|A|=m\leq n$, arranged in increasing order, and for $i,j\in\{0,\ldots,m-1\}$, we let $\cw{A}{a_i}{a_j}$ be the collection of elements of $A$ enumerated in clockwise order from $a_i$ to $a_j$; that is,
\[\cw{A}{a_i}{a_j}=\{a_{i+\ell}\st 0\leq\ell\leq d_{C_m}^*(i,j)\},\]
where we assume indices are reduced modulo $m$. Notice that, when we take $A=C_n$, this notation reduces to the usual clockwise interval from $a_i$ to $a_j$ in $C_n$. For $a_i,a_j\in A$, the \emph{diatonic length} of the inteval $\cw{A}{a_i}{a_j}$, also referred to as the \emph{$A$-span} of the ordered pair $(a_i,a_j)$ and written as $\spn_A(a_i,a_j)$, is the least positive integer congruent to $j-i$ modulo $m$. Notice that $|\cw{A}{a_i}{a_j}|=\spn_A(a_i,a_j)+1$. We define two multisets: the \emph{$k$-multispectrum of clockwise distances} of $A$ is the multiset
\[\sigma^*_k(A)=\left[\,d^*(u,v)\st \text{$u,v\in A$, $u\neq v$ \text{ and }$\spn_A(u,v)=k\,$}\right]\]
and the \emph{$k$-multispectrum of geodesic distances} of $A$ is the multiset
\[\sigma_k(A)=\left[\,d(u,v)\st\text{$u,v\in A$, $u\neq v$ \text{ and }$\spn_A(u,v)=k\,$}\right].\]
For a multiset $X$ we define $\supp(X)$ to be the set whose elements are those of $X$. 

\begin{definition}[{Clough and Douthett \cite{CloughDouthett}}]\label{definition_me} A set $A$ of vertices in $C_n$ with $|A|=m$ is \emph{maximally even} if for each integer $k$ with $1\leq k \leq m-1$ the set $\supp(\sigma_k^*(A))$ consists of either a single integer or two consecutive integers.
\end{definition}

If we are given a particular set $A$ of vertices in $C_n$, for some particular $n$, we can easily check whether or not $A$ is maximally even using Definition \ref{definition_me}. However, if we are given integers $n$ and $m$ with $0\leq m\leq n$, it is not immediately clear, especially for large values of $n$ and $m$, which sets of vertices in $C_n$ are maximally even. For this reason, Clough and Douthett gave a more explicit characterization of maximal evenness in terms of what they called $J$-representations.

\begin{definition}[{Clough and Douthett \cite{CloughDouthett}}]
Suppose $m$, $n$ and $r$ are integers with $1\leq m\leq n$ and $0\leq r\leq n-1$. We define
\[J^r_{n,m}=\left\{\floor{\frac{ni+r}{m}}\st i\in \Z \text{ and } 0\leq i < m\right\}.\]
Sets of the form $J^r_{n,m}$ are called \emph{$J$-representations}.
\end{definition}

\begin{theorem}[{Clough and Douthett \cite[Theorem 1.2 and Theorem 1.5]{CloughDouthett}}]\label{theorem_J_is_maximally_even}
Suppose $m$ and $n$ are integers with $1\leq m\leq n$. Then a set of vertices is maximally even in $C_n$ if and only if it is of the form $J^r_{n,m}$ for some integer $r$ with $0\leq r\leq n$.
\end{theorem}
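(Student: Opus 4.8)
The plan is to prove the two implications separately, using as a pivot a reformulation of maximal evenness. Fix a set $A=\{a_0,\ldots,a_{m-1}\}$ of vertices in $C_n$ listed so that $0\le a_0<\cdots<a_{m-1}\le n-1$. I would first record two elementary facts. The first is that for each $k$ with $1\le k\le m-1$ the multiset $\sigma^*_k(A)$ has exactly $m$ entries whose sum is $kn$, since each of the $m$ gaps $d^*(a_i,a_{i+1})$ is counted exactly $k$ times as $i$ ranges over the cyclic index set. Combined with the sum being $kn$, this forces that if $\supp(\sigma^*_k(A))$ lies in two consecutive integers, those integers must be $\floor{kn/m}$ and $\ceil{kn/m}$. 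Hence $A$ is maximally even if and only if $d^*(u,v)\in\{\floor{kn/m},\ceil{kn/m}\}$ whenever $\spn_A(u,v)=k$. The second fact is the identity $\floor{(x+y)/m}-\floor{x/m}\in\{\floor{y/m},\ceil{y/m}\}$, valid for all integers $x,y$ and $m\ge 1$.

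For the direction that every $J$-representation is maximally even, I would compute the span-$k$ clockwise distances of $J^r_{n,m}$ directly. Writing $a_i=\floor{(ni+r)/m}$ and treating the wrap-around case by adding $n$, one checks that the formula $d^*(a_i,a_{i+k})=\floor{(n(i+k)+r)/m}-\floor{(ni+r)/m}$ holds uniformly for every $i$. Applying the identity above with $x=ni+r$ and $y=nk$ shows each such distance lies in $\{\floor{nk/m},\ceil{nk/m}\}$, so $\supp(\sigma^*_k(A))$ is contained in two consecutive integers and $J^r_{n,m}$ is maximally even.

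For the converse, suppose $A$ is maximally even. By the reformulation, for all $0\le i<j\le m-1$ we have $a_j-a_i=d^*(a_i,a_j)\in\{\floor{(j-i)n/m},\ceil{(j-i)n/m}\}$. Setting $t_i=ma_i-ni$, a short case analysis shows that $t_j-t_i=m(a_j-a_i)-(j-i)n$ lies in $\{-(m-1),\ldots,m-1\}$ in either case, so $|t_i-t_j|\le m-1$ for all $i,j$ and therefore $\max_i t_i-\min_i t_i<m$. Consequently the half-open intervals $[t_i,t_i+m)$ share a common integer $r$ (for instance $r=\max_i t_i$), and for this $r$ one has $ma_i\le ni+r<ma_i+m$, i.e.\ $a_i=\floor{(ni+r)/m}$ for every $i$; thus $A=J^r_{n,m}$. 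Finally, since $J^{r+n}_{n,m}=J^r_{n,m}$ as sets of vertices of $C_n$, I can reduce $r$ modulo $n$ to land in the stated range $0\le r\le n$.

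I expect the converse to be the crux. The maximal-evenness hypothesis is a family of purely local, pairwise conditions on the differences $a_j-a_i$, and the real work is to package them into a single global parameter $r$. The device of passing to the quantities $t_i=ma_i-ni$ and proving that they all fit inside a window of length less than $m$ is where the argument actually happens; the preliminary step that forces the two spectral values to be precisely $\floor{kn/m}$ and $\ceil{kn/m}$, via the identity that the entries of $\sigma^*_k(A)$ sum to $kn$, is exactly what makes the window bound tight enough to work, while the concluding reduction of $r$ into $[0,n]$ is routine bookkeeping.
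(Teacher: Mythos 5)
Your proposal is correct, but note that the paper does not prove this theorem at all: it is imported verbatim from Clough and Douthett, whose original argument runs through a sequence of auxiliary lemmas about the spectra of $J$-functions and a fairly long case analysis. What you give instead is a self-contained proof by what is essentially the classical ``balanced implies mechanical'' window argument from the theory of Sturmian/Beatty sequences, and every step checks out. Your Fact 1 is right because the $m$ ordered pairs of span $k$ telescope into $k$ copies of each of the $m$ consecutive gaps, and since $k\leq m-1$ each such sum of $k$ gaps is strictly between $0$ and $n$ and hence literally equals the clockwise distance; together with the sum being $kn$, the one-or-two-consecutive-values condition then pins the support inside $\left\{\floor{kn/m},\ceil{kn/m}\right\}$, exactly as in the paper's Lemma \ref{lemma_consecutive}. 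The forward direction is the standard floor-difference computation, where your wrap-around normalization is justified by $\floor{(n(i+k)+r)/m}=n+\floor{(n(i+k-m)+r)/m}$. The converse is the crux, as you say, and your device works: from $a_j-a_i\in\left\{\floor{(j-i)n/m},\ceil{(j-i)n/m}\right\}$ one gets $|t_i-t_j|\leq m-1$ for $t_i=ma_i-ni$, so $r=\max_i t_i$ lies in every interval $[t_i,t_i+m)$, which unwinds to $a_i=\floor{(ni+r)/m}$ for all $i$; the final reduction of $r$ into $[0,n]$ is legitimate because the substitution $r\mapsto r+n$ merely shifts the index $i\mapsto i+1$ and wraps the last element modulo $n$, so $J^{r+n}_{n,m}=J^r_{n,m}$ as subsets of $V(C_n)$. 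Two points deserve one explicit line each in a polished write-up — the inequality $k\leq m-1$ underlying the telescoping identity, and the index-shift verification of $J^{r+n}_{n,m}=J^r_{n,m}$ — but both are routine, and your route is shorter and arguably more transparent than the original, at the cost of being specific to the clockwise-distance formulation rather than developing the spectra machinery Clough and Douthett reuse elsewhere.
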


\begin{figure}
\begin{subfigure}[c]{.3\textwidth}
    \centering
\begin{tikzpicture}
  \foreach \i in {0,2,4,7,9} {
    \coordinate (v\i) at ({90-(360/12)*\i}:1.5);
    \node[draw, circle, fill=black, text=white, inner sep=0pt, minimum width = 12pt] at (v\i) {\small $\i$};
  }
  
  \foreach \i in {1,3,5,6,8,10,11} {
    \coordinate (v\i) at ({90-(360/12)*\i}:1.5);
    \node[draw, circle, fill=white, inner sep=0pt, minimum width = 12pt] at (v\i) {\small $\i$};
  }
  
  \begin{pgfonlayer}{background}
    \foreach \i in {0,...,10} {
      \draw (v\i) -- (v\the\numexpr\i+1\relax);
    }
    \draw (v11) -- (v0); 
  \end{pgfonlayer}
\end{tikzpicture}
\caption{\small }
\end{subfigure}
\hspace{0.025\textwidth}
\begin{subfigure}[c]{.3\textwidth}
    \centering
\vfill
\includegraphics[width=\textwidth]{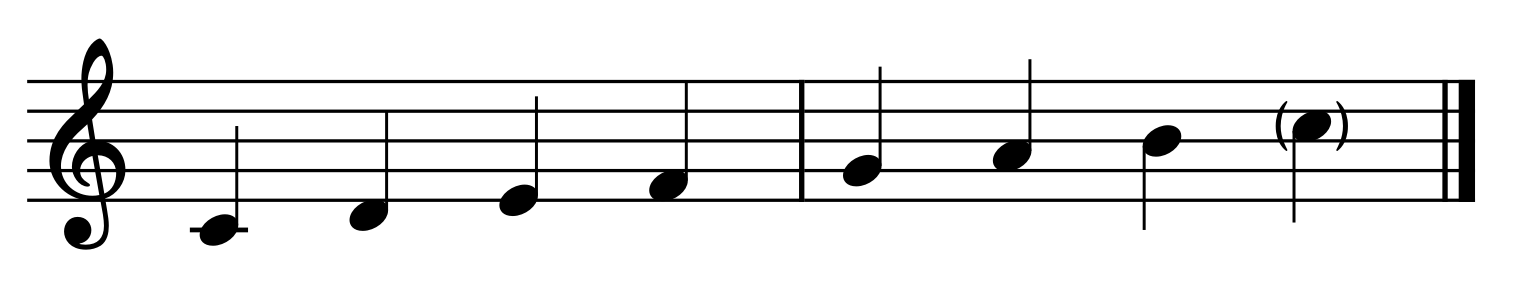}
\vfill
\includegraphics[width=\textwidth]{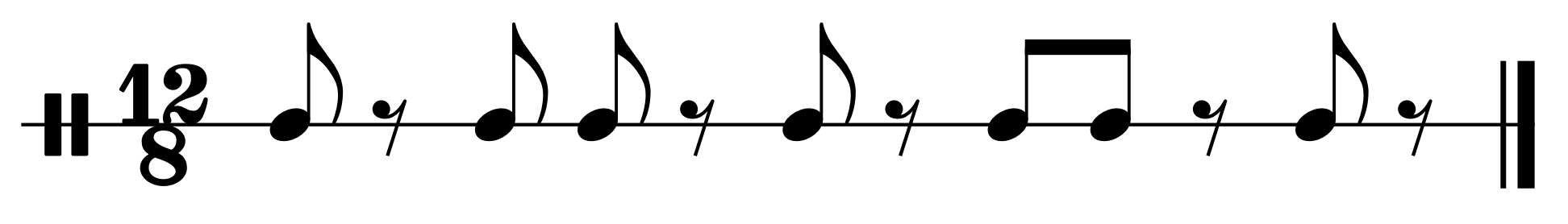}
\vfill
\caption{\small }
\end{subfigure}
\hspace{0.025\textwidth}
\begin{subfigure}[c]{0.3\textwidth}
    \centering
\vfill
\begin{tikzpicture}[scale=0.5]
    \def\highlightWhiteKeys{{0,0,0,0,0,0,0}}
    \def\highlightBlackKeys{{0,0,0,0,0,0}}
    \def\WkeyLabels{{0,2,4,5,7,9,11}} 
    \def\BkeyLabels{{1,3,0,6,8,10}}
    \foreach \i in {0,...,6} {
        \pgfmathparse{\highlightWhiteKeys[\i]}
        \ifnum\pgfmathresult=1
            \fill[Pinkish] (\i,0) rectangle (\i+1,4); 
        \fi
        \draw (\i,0) rectangle (\i+1,4); 
        \pgfmathsetmacro{\label}{\WkeyLabels[\i]}
    }

    \foreach \i [count=\j] in {0,1,3,4,5} {
        \pgfmathparse{\highlightBlackKeys[\j]}
        \ifnum\pgfmathresult=1
            \draw[fill=Pinkish!70!black] (\i+0.65,1.61) rectangle (\i+1.35,4); 
        \fi
        \ifnum\pgfmathresult=0
            \draw[fill=black] (\i+0.65,1.61) rectangle (\i+1.35,4);
        \fi
        \pgfmathsetmacro{\label}{\BkeyLabels[\i]}
    }
\end{tikzpicture}
\vfill
\caption{\small }
\end{subfigure}
\caption{\small \small The maximally even set $J^0_{12,5}$ is shown in black (A), a rotation of the complement of $J^0_{12,5}$ is written in standard notation as both a musical scale and rhythm (B) and $J^0_{12,5}$ as well as its complement can be visualized as the black keys and white keys, respectively, on a piano keyboard.}\label{figure_ME_set}
\end{figure}

\begin{example}
If the vertices of $C_{12}$ are viewed as representing the 12 standard pitch classes, the set $A=J^0_{12,5}=\{a_0,a_1,a_2,a_3,a_4\}=\{0,2,4,7,9\}$, shown in Figure \ref{figure_ME_set}, can be identified with the pitch classes corresponding to the black keys on a piano keyboard. Then the complement $V(C_n)\setminus J^0_{12,5}=J^{11}_{12,7}$ corresponds to the diatonic scale, i.e., the white keys on a piano keyboard. Notice that 
\[\spec_1(A)=\sigma_1(A)=[\,2,2,2,3,3\,]\]
and
\[\spec_2(A)=\sigma_2(A)=[\,4,5,5,5,5\,],\]
and for $k\in\{3,4\}$, we have $\spec_3(A)=[\,7,7,7,7,8\,]$, $\sigma_3(A)=[\,4,5,5,5,5\,]$, $\spec_4(A)=[\,9,9,10,10,10\,]$ and 
$\sigma_4(A)=[\,2,2,2,3,3\,].$ Let us note that, it is not a coincidence that in this example there is a redundancy in the multispectra $\sigma_k(A)$ of geodesic distances in the sense that $\sigma_3(A)=\sigma_2(A)$ and $\sigma_4(A)=\sigma_1(A)$. Indeed, Corollary \ref{corollary_spec_characterization} explicitly states that we can characterize maximally even sets by considering the sets $\sigma_k(A)$ just for $1\leq k\leq\floor{\frac{m}{2}}$.
\end{example}

For the reader's convenience, let us collect some straight-forward corollaries of results of Clough and Douthett. Let us emphasize that the corollaries below differ from the results of Clough and Douthett in that they involve geodesic distances rather than clockwise distances.

\begin{corollary}\label{corollary_spec_of_J_rep}
Suppose $k$, $m$, $n$ and $r$ are integers with $1\leq m\leq n$ and $0\leq r\leq n-1$. 
\begin{enumerate}[\normalfont(1)]
\item If $0<k<\frac{m}{2}$ or $n$ is even then 
\[\supp(\sigma_k(J^r_{n,m}))=\left\{\floor{\frac{nk}{m}},\ceil{\frac{nk}{m}}\right\}.\]
\item If $k=\frac{m}{2}$ and $n$ is odd then 
\[\supp(\sigma_k(J^r_{n,m}))=\left\{\floor{\frac{n}{2}}\right\}.\]
\end{enumerate}
\end{corollary}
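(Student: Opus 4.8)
The plan is to reduce everything to the clockwise multispectrum $\sigma^*_k$ and then pass to geodesic distances via the identity $d(u,v)=\min(d^*(u,v),n-d^*(u,v))$ on $C_n$. Since the geodesic distance of a pair is a function of its clockwise distance, applying this function term by term gives $\sigma_k(J^r_{n,m})=[\,\min(x,n-x)\st x\in\sigma^*_k(J^r_{n,m})\,]$, so that
\[\supp(\sigma_k(J^r_{n,m}))=\{\min(x,n-x)\st x\in\supp(\sigma^*_k(J^r_{n,m}))\}.\]
The problem then splits into (i) identifying $\supp(\sigma^*_k(J^r_{n,m}))$ and (ii) understanding how $x\mapsto\min(x,n-x)$ acts on it. Throughout I would use that the corollary is only asserted in the range $1\le k\le\floor{\frac m2}$ relevant to Corollary \ref{corollary_spec_characterization}; the stated hypotheses then partition this range into the cases ``$0<k<\frac m2$'', ``$k=\frac m2$ with $n$ even'', and ``$k=\frac m2$ with $n$ odd''.

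For step (i) I would work with the lift $a_i=\floor{\frac{ni+r}{m}}$ defined for all $i\in\Z$, which enumerates $J^r_{n,m}$ for $0\le i<m$ and satisfies $a_{i+m}=a_i+n$. Writing $\frac{ni+r}{m}=a_i+\fraction{\frac{ni+r}{m}}$ and adding $\frac{nk}{m}$ shows that each clockwise span-$k$ gap satisfies
\[a_{i+k}-a_i=\floor{\fraction{\tfrac{ni+r}{m}}+\tfrac{nk}{m}}\in\left\{\floor{\tfrac{nk}{m}},\ceil{\tfrac{nk}{m}}\right\},\]
and that the $d^*$ of the corresponding pair equals exactly this gap, so $\sigma^*_k(J^r_{n,m})=[\,a_{i+k}-a_i\st 0\le i<m\,]$. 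The quantitative input is the periodicity identity $\sum_{i=0}^{m-1}(a_{i+k}-a_i)=kn$, obtained from $a_{i+m}=a_i+n$. Since every summand is one of the two consecutive integers $\floor{nk/m}\le\ceil{nk/m}$, counting forces exactly $nk\bmod m$ of them to equal $\ceil{nk/m}$ and the rest to equal $\floor{nk/m}$. Hence $\supp(\sigma^*_k(J^r_{n,m}))=\{\floor{nk/m},\ceil{nk/m}\}$, a singleton precisely when $m\mid nk$ and a genuine pair of consecutive integers otherwise, recovering the clockwise version of Clough and Douthett's result.

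For step (ii) and part (1), when $0<k<\frac m2$ I would show $\ceil{\frac{nk}{m}}\le\frac n2$: from $2k\le m-1$ one gets $\frac{nk}{m}\le\frac n2-\frac{n}{2m}$, so $2\ceil{nk/m}<n+2-\frac nm\le n+1$, where $m\le n$ is used in the last step, whence $2\ceil{nk/m}\le n$. Then both elements of $\{\floor{nk/m},\ceil{nk/m}\}$ lie at or below $\frac n2$, so $\min(x,n-x)=x$ on this set and the support is unchanged. When $k=\frac m2$ and $n$ is even, $\frac{nk}{m}=\frac n2$ is an integer fixed by $x\mapsto\min(x,n-x)$, so the support is the singleton $\{n/2\}=\{\floor{nk/m},\ceil{nk/m}\}$, again matching part (1). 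Finally, for part (2) with $k=\frac m2$ and $n$ odd, the clockwise support is the true pair $\{\frac{n-1}{2},\frac{n+1}{2}\}$ (here $nk\equiv\frac m2\pmod m$ is nonzero, guaranteeing both values occur), and since $\min(\frac{n+1}{2},\frac{n-1}{2})=\frac{n-1}{2}=\min(\frac{n-1}{2},\frac{n+1}{2})$, both clockwise values collapse onto $\floor{n/2}$, yielding the singleton $\{\floor{n/2}\}$.

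The only genuinely delicate points are the two attainment facts extracted from the sum identity, namely that the counting pins down exactly how many gaps take the larger value so that the clockwise support is the full two-element set whenever $m\nmid nk$, and the elementary inequality $\ceil{nk/m}\le n/2$ for $k<\frac m2$, which is where the hypothesis $m\le n$ enters. Everything else is bookkeeping with floors, the periodicity $a_{i+m}=a_i+n$, and the collapsing behavior of $x\mapsto\min(x,n-x)$ at the antipode $n/2$.
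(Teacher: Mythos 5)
Your proof is correct, and it takes the route the paper intends but never writes down: the paper states this corollary without proof, as a ``straight-forward'' consequence of Clough and Douthett's clockwise-distance results, leaving implicit the passage from $\sigma^*_k$ to $\sigma_k$ via $d(u,v)=\min\left(d^*(u,v),\,n-d^*(u,v)\right)$ applied termwise to the multiset. What you add is a self-contained derivation of the clockwise input rather than a citation: the lift $a_i=\floor{\frac{ni+r}{m}}$ with $a_{i+m}=a_i+n$, the identity $a_{i+k}-a_i=\floor{\fraction{\frac{ni+r}{m}}+\frac{nk}{m}}\in\left\{\floor{\frac{nk}{m}},\ceil{\frac{nk}{m}}\right\}$, and the sum identity $\sum_{i=0}^{m-1}(a_{i+k}-a_i)=kn$ forcing exactly $nk\bmod m$ occurrences of the larger value, so that both values genuinely occur whenever $m\nmid nk$. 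This is essentially Clough and Douthett's own computation, and your counting step is the same mechanism that drives the paper's Lemma \ref{lemma_consecutive}, so the two approaches differ only in self-containedness. Two points you handle that deserve explicit credit: first, you correctly read the statement as restricted to $1\le k\le\floor{\frac{m}{2}}$, which matches the paper's surrounding discussion and Corollary \ref{corollary_spec_characterization} and is in fact necessary --- without it the ``$n$ even'' clause of part (1) would be false, e.g.\ $\supp(\sigma_4(J^0_{12,5}))=\{2,3\}$, not $\left\{\floor{\frac{48}{5}},\ceil{\frac{48}{5}}\right\}=\{9,10\}$; second, you actually verify $\ceil{\frac{nk}{m}}\le\frac{n}{2}$ for $0<k<\frac{m}{2}$, correctly isolating the one place where the hypothesis $m\le n$ enters so that $x\mapsto\min(x,n-x)$ fixes the clockwise support, rather than taking this for granted. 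The endgame at $k=\frac{m}{2}$ --- the fixed point $\frac{n}{2}$ when $n$ is even, and the collapse of the genuine pair $\left\{\frac{n-1}{2},\frac{n+1}{2}\right\}$ onto $\floor{\frac{n}{2}}$ when $n$ is odd (using $nk\equiv\frac{m}{2}\pmod{m}$ to guarantee the pair is attained) --- is also correct.
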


\begin{corollary}\label{corollary_sum_of_spec_of_J_rep}
Suppose $k,m,n$ and $r$ are integers such that $1\leq m\leq n$ and $1\leq k\leq\floor{\frac{m}{2}}$. Then
\[\sum\sigma_k(J^r_{n,m})=\begin{cases}
nk & \text{if $k<\frac{m}{2}$ or $n$ is even}\\
(n-1)k & \text{if $k=\frac{m}{2}$ and $n$ is odd.}
\end{cases}\]
\end{corollary}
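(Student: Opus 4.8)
The plan is to pass to clockwise distances, where the sum is easy to compute for an arbitrary set, and then transfer back to geodesic distances using Corollary \ref{corollary_spec_of_J_rep}. The main simplification is that the clockwise analogue of the sum behaves uniformly, and only the boundary case $k=\frac{m}{2}$ with $n$ odd forces a correction.

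First I would record a lemma valid for \emph{any} $m$-subset $A=\{a_0,\ldots,a_{m-1}\}$ of $C_n$, enumerated in increasing order with indices read modulo $m$: for every $1\le k\le m-1$,
\[\sum\sigma^*_k(A)=\sum_{i=0}^{m-1}d^*(a_i,a_{i+k})=nk.\]
To prove this, write $g_j=d^*(a_j,a_{j+1})$ for the $m$ consecutive gap lengths, so that $\sum_j g_j=n$ and $d^*(a_i,a_{i+k})=\sum_{\ell=0}^{k-1}g_{i+\ell}$. Interchanging the order of summation, each gap $g_j$ is counted exactly $k$ times (once for each $\ell\in\{0,\ldots,k-1\}$), which yields $k\sum_j g_j=kn$. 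Along the way I would note that there are exactly $m$ ordered pairs of $A$-span $k$, one for each $a_i$, so $\sigma^*_k(A)$ and $\sigma_k(A)$ are both multisets of size $m$.

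Next I would dispose of case (1), i.e.\ when $k<\frac m2$ or $n$ is even, by showing that $\sigma_k(J^r_{n,m})=\sigma^*_k(J^r_{n,m})$ as multisets; the desired value $nk$ then follows immediately from the lemma. Since $d(u,v)=\min\{d^*(u,v),\,n-d^*(u,v)\}$, it is enough to check that every clockwise distance of span $k$ is at most $\floor{n/2}$. By Clough and Douthett's clockwise distance spectrum for $J$-representations (the form of \cite{CloughDouthett} underlying Corollary \ref{corollary_spec_of_J_rep}), these clockwise distances lie in $\{\floor{nk/m},\ceil{nk/m}\}$, so it suffices to verify $\ceil{nk/m}\le\floor{n/2}$. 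A short estimate using $k\le\floor{m/2}$ together with $n\ge m$ shows that $nk/m\le\frac{n-1}{2}$ when $k<\frac m2$ and $n$ is odd, while $nk/m\le\frac n2$ when $n$ is even; in both situations $\ceil{nk/m}\le\floor{n/2}$, so each clockwise distance coincides with its geodesic distance.

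Finally, for case (2), when $k=\frac m2$ and $n$ is odd, I would invoke Corollary \ref{corollary_spec_of_J_rep}(2) directly: every geodesic distance of span $k$ equals $\floor{n/2}=\frac{n-1}{2}$. Since $\sigma_k(J^r_{n,m})$ has exactly $m$ elements, its sum is $m\cdot\frac{n-1}{2}=(n-1)k$, as claimed. The main obstacle is precisely the transfer in case (1): the clockwise sum identity is clean and general, but equating the geodesic and clockwise multisets hinges on the sharp inequality $\ceil{nk/m}\le\floor{n/2}$, which is tight exactly as $k$ approaches $\frac m2$ and whose failure for $n$ odd and $k=\frac m2$ is what produces the second case of the formula. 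I would therefore treat the parity of $n$ carefully in that estimate.
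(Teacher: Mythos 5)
Your proof is correct. There is no internal proof to compare against: the paper states this corollary without argument, listing it among the ``straight-forward corollaries'' of Clough and Douthett's results, so what you have written is a legitimate and complete filling-in, close in spirit to the computations of Douthett and Krantz.

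Two features of your route are worth noting. First, your opening lemma --- that $\sum\sigma^*_k(A)=nk$ for \emph{every} $m$-subset $A$ of $C_n$ and every $1\le k\le m-1$, proved by counting each gap $g_j$ exactly $k$ times --- is strictly more general than the corollary itself, and that generality is genuinely useful: in the proof of Theorem \ref{theorem_balanced} the paper invokes Corollary \ref{corollary_sum_of_spec_of_J_rep} to conclude $\sum\spec_\ell(A)=\ell n$ for an \emph{arbitrary} Wiener-index maximizer $A$, a step which the corollary as literally stated (about $J$-representations and geodesic spectra) does not quite cover, but which your gap-counting lemma does. Second, the only imported fact in your argument --- that the clockwise span-$k$ distances of $J^r_{n,m}$ lie in $\left\{\floor{\frac{nk}{m}},\ceil{\frac{nk}{m}}\right\}$ --- is the right citation and can even be verified in one line from the defining formula: in all cases (including wrap-around, after adding $n$) one has $d^*(a_i,a_{i+k})=\floor{\frac{n(i+k)+r}{m}}-\floor{\frac{ni+r}{m}}$, and $\floor{x}+\floor{y}\le\floor{x+y}\le\floor{x}+\ceil{y}$. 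Your transfer inequality is also correct exactly where it matters: for $n$ odd and $k<\frac{m}{2}$ one has $2nk\le n(m-1)\le m(n-1)$ using $m\le n$, whence $\ceil{\frac{nk}{m}}\le\frac{n-1}{2}=\floor{\frac{n}{2}}$, and this bound is tight, correctly isolating the exceptional case $k=\frac{m}{2}$ with $n$ odd, which you then settle via Corollary \ref{corollary_spec_of_J_rep}(2) together with the count of $m$ ordered span-$k$ pairs (noting, as you do, that the multisets range over ordered pairs, so the size-$m$ count is what makes $m\cdot\frac{n-1}{2}=(n-1)k$ come out).
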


\begin{corollary}\label{corollary_spec_characterization}
Suppose $m$ and $n$ are integers with $1\leq m\leq n$. Then a set $A$ of vertices in $C_n$ is maximially even if and only if the following conditions hold.
\begin{enumerate}[\normalfont(1)]
\item If $0<k<\frac{m}{2}$ or $n$ is even then 
\[\supp(\sigma_k(A))=\left\{\floor{\frac{nk}{m}},\ceil{\frac{nk}{m}}\right\}.\]
\item If $k=\frac{m}{2}$ and $n$ is odd then 
\[\supp(\sigma_k(A))=\left\{\floor{\frac{n}{2}}\right\}.\]
\end{enumerate}
\end{corollary}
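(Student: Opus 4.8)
The plan is to prove the two implications separately, using Theorem~\ref{theorem_J_is_maximally_even} to pass between maximal evenness and $J$-representations. The forward implication is immediate: if $A$ is maximally even then $A=J^r_{n,m}$ for some $r$ by Theorem~\ref{theorem_J_is_maximally_even}, and conditions (1) and (2) are then precisely the conclusion of Corollary~\ref{corollary_spec_of_J_rep}. All of the work lies in the converse.

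For the converse, suppose (1) and (2) hold for all $1\le k\le\floor{m/2}$; by Definition~\ref{definition_me} it suffices to show that $\supp(\sigma^*_k(A))$ consists of one or two consecutive integers for every $1\le k\le m-1$. I would first record the telescoping identity $\sum\sigma^*_k(A)=nk$, obtained by summing $d^*(a_i,a_{i+k})=\sum_{j=i}^{i+k-1}g_j$ over $i$, where $g_i=d^*(a_i,a_{i+1})$ denotes the $i$-th gap: each gap is counted exactly $k$ times, so the total is $k\sum_i g_i=kn$. The heart of the argument is then the claim that, under (1) and (2), every span-$k$ clockwise distance is at most $\floor{n/2}$, so that $d^*(a_i,a_{i+k})=d(a_i,a_{i+k})$ and the support of $\sigma^*_k(A)$ agrees with that of $\sigma_k(A)$; the desired two-consecutive-integer conclusion for $k\le\floor{m/2}$ is then exactly (1) and (2), while the remaining range $\floor{m/2}<k\le m-1$ follows from the complement symmetry $\sigma^*_{m-k}(A)=[\,n-x\st x\in\sigma^*_k(A)\,]$.

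To prove the claim I would argue as follows. The case $k=1$ should force the gaps to be balanced, i.e.\ $g_i\in\{\floor{n/m},\ceil{n/m}\}$, since a gap exceeding $n/2$ would, together with $\sum_i g_i=n$ and the lower bounds on the remaining gaps, push the total above $n$, contradicting (1). Granting balanced gaps, consecutive gaps differ by at most $1$, so the cyclic sequence $i\mapsto d^*(a_i,a_{i+k})$ changes by at most $1$ at each step. If some value of this sequence exceeded $n/2$ while another fell below $n/2$ (which must happen, since for $k<m/2$ the average $nk/m$ is below $n/2$), a discrete intermediate value argument would force the sequence to take a value strictly between $q+1$ and $n-q-1$, where $q=\floor{nk/m}$; but condition (1) confines every geodesic distance, hence every clockwise distance, to $\{q,q+1,n-q-1,n-q\}$, a contradiction. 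Thus no clockwise distance is large and the claim follows.

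The main obstacle is the boundary regime $k\approx m/2$, where $q+1$ and $n-q-1$ are adjacent or coincide and the intermediate value argument has no forbidden middle to exploit; here I would instead argue quantitatively, combining the identity $\sum\sigma^*_k(A)=nk$ with the exact sums recorded in Corollary~\ref{corollary_sum_of_spec_of_J_rep} and the parity input of condition (2) (the case $k=m/2$, $n$ odd) to bound the number of large distances directly and show there are none. Verifying the balanced-gaps step together with this boundary count in the small-parameter cases is the most delicate part of the proof.
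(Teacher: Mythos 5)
Your forward direction is fine and is exactly the paper's route (Theorem~\ref{theorem_J_is_maximally_even} plus Corollary~\ref{corollary_spec_of_J_rep}; the paper treats the whole corollary as a direct translation of Clough--Douthett from clockwise to geodesic distances). The converse, however, fails at its central claim: conditions (1)--(2) do \emph{not} force every span-$k$ clockwise distance to be at most $\floor{\frac{n}{2}}$, and your balanced-gaps step is where it breaks. Take $n=5$, $m=3$, $A=\{0,3,4\}$, with gaps $(3,1,1)$. Here $\floor{\frac{m}{2}}=1$, and $\sigma_1(A)=[\,d(0,3),d(3,4),d(4,0)\,]=[\,2,1,1\,]$, so $\supp(\sigma_1(A))=\{1,2\}=\left\{\floor{\frac{5}{3}},\ceil{\frac{5}{3}}\right\}$ and the hypotheses hold for every relevant $k$; yet $\sigma^*_1(A)=[\,3,1,1\,]$ has support $\{1,3\}$, so $A$ is not maximally even. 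In your balanced-gaps estimate the totals come out to exactly $n$ rather than exceeding it: a gap of $5-\ceil{\frac{5}{3}}=3$ plus two gaps of $\floor{\frac{5}{3}}=1$ gives $3+1+1=5=n$, so there is no contradiction. Thus the ``delicate small-parameter case'' you flagged is not a technicality you can grind out: it is a counterexample, and indeed a counterexample to the corollary as literally printed (with the implicit quantifier $1\le k\le\floor{\frac{m}{2}}$ that the paper's surrounding example confirms), not merely to your method. (A smaller remark: your claim also fails structurally at $k=\frac{m}{2}$ with $n$ odd, since complementary span-$\frac{m}{2}$ pairs have clockwise distances summing to $n$, so some exceed $\floor{\frac{n}{2}}$; but there condition (2) already yields the two-consecutive-integers conclusion with no work, so your ``show there are none'' is not what is needed.)

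What is missing is control of the sums, and it cannot be recovered from the stated hypotheses. The support conditions say nothing about $\sum\sigma_k(A)$: geodesic reflection ($d=n-d^*$ when $d^*>\frac{n}{2}$) can strictly lower the sum below $nk$ while preserving the support, which is exactly what happens above ($\sum\sigma_1(A)=4<5$). The correct hypothesis --- and the one the paper actually invokes when it uses this corollary inside the proof of Theorem~\ref{theorem_min_energy_cycles} --- is multiset equality $\sigma_k(A)=\sigma_k(J^0_{n,m})$ for $1\le k\le\floor{\frac{m}{2}}$, equivalently the stated support conditions together with $\sum\sigma_k(A)=nk$ for $k<\frac{m}{2}$ (by Lemma~\ref{lemma_consecutive} and Corollary~\ref{corollary_sum_of_spec_of_J_rep}, support plus sum pins down the multiset). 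With that strengthening, your outline closes immediately and more simply than you feared: since $d(u,v)\le d^*(u,v)$ termwise and your telescoping identity gives $\sum\sigma^*_k(A)=nk$, equality of the sums forces $d=d^*$ for every span-$k$ pair, so $\supp(\sigma^*_k(A))=\supp(\sigma_k(A))$ is one or two consecutive integers for $k<\frac{m}{2}$; the case $k=\frac{m}{2}$ is immediate from (1)/(2) as noted; and your complement symmetry $\sigma^*_{m-k}(A)=[\,n-x\st x\in\sigma^*_k(A)\,]$ handles $\floor{\frac{m}{2}}<k\le m-1$. No balanced-gaps lemma and no intermediate-value argument are needed --- your quantitative instinct at the boundary was pointing at the right tool, but the sum identity must enter as a hypothesis (or via multiset equality), not as a consequence of the support conditions.
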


The following lemma is folklore and is implicit in \cite{MR1401228} and \cite[Lemma 1.9]{CloughDouthett}.

\begin{lemma}\label{lemma_consecutive}
Suppose $m$ and $S$ are integers with $m\geq 1$. There exists a unique multiset of integers $D(m,S)=\left[\,x_1,\ldots,x_m\,\right]$ of cardinality $m$ such that 
\begin{enumerate}[\normalfont(1)]
\item $\displaystyle\sum_{i=1}^mx_i=S$ and
\item $\displaystyle\max_{1\leq i\leq m}x_i-\min_{1\leq i\leq m} x_i\leq 1$.
\end{enumerate}
Furthermore, we have $\supp(D(m,S))=\left\{\floor{\frac{S}{m}},\ceil{\frac{S}{m}}\right\}$. Moreover, if $\max(x)-\min(x)=1$, then the number of occurrences of $\floor{\frac{S}{m}}$ in $D(m,S)$ is 
$m(\floor{\frac{S}{m}}+1)-S$ and the number of occurrences of $\ceil{\frac{S}{m}}$ is 
$S-m\floor{\frac{S}{m}}$.
\end{lemma}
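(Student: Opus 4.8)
The plan is to prove existence by writing $S$ in the form supplied by the division algorithm and exhibiting the multiset explicitly, then to prove uniqueness by showing that any multiset satisfying (1) and (2) is forced into exactly this shape. The support statement and the multiplicity counts then fall out of the explicit description, so these are handled simultaneously with existence.

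First I would apply the division algorithm to write $S=qm+t$, where $q=\floor{S/m}$ and $t=S-qm$ is the unique integer with $0\le t\le m-1$ (this works for every integer $S$, including negatives, since $m\geq 1$). I then define $D(m,S)$ to be the multiset containing $q+1$ with multiplicity $t$ and $q$ with multiplicity $m-t$. This multiset has cardinality $m$, and its sum is $t(q+1)+(m-t)q=qm+t=S$, which gives (1). Since every entry lies in $\{q,q+1\}$, condition (2) holds, with $\max-\min\le 1$ and equality precisely when $t>0$. For the support and multiplicity claims I would split on whether $t=0$: when $t=0$ we have $m\mid S$, so $\floor{S/m}=\ceil{S/m}=q$ and $\supp(D(m,S))=\{q\}$ is a single integer; when $t>0$ we have $\ceil{S/m}=q+1$, so $\supp(D(m,S))=\{q,q+1\}=\{\floor{S/m},\ceil{S/m}\}$. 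In the latter case the number of occurrences of $\floor{S/m}=q$ is $m-t=m(q+1)-S=m(\floor{S/m}+1)-S$, and the number of occurrences of $\ceil{S/m}=q+1$ is $t=S-mq=S-m\floor{S/m}$, exactly as stated.

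For uniqueness, I would take an arbitrary multiset $[x_1,\ldots,x_m]$ satisfying (1) and (2) and set $a=\min_i x_i$. Condition (2) forces every entry into $\{a,a+1\}$, so if $c$ denotes the number of entries equal to $a+1$, the sum condition yields $S=(m-c)a+c(a+1)=ma+c$. Because $a$ is actually attained as a value, at least one entry equals $a$, so $c\le m-1$; together with $c\ge 0$ this gives $0\le c\le m-1$. Thus $S=ma+c$ is a division-algorithm representation of $S$ with remainder in $\{0,\ldots,m-1\}$, and uniqueness of that representation forces $a=\floor{S/m}=q$ and $c=t$. Hence the multiset contains $q+1$ with multiplicity $t$ and $q$ with multiplicity $m-t$, i.e., it equals $D(m,S)$, proving uniqueness.

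The computations here are genuinely routine; the one point I would flag as the main (if mild) obstacle is the degenerate case $t=0$, where $m\mid S$ and all entries coincide. There one must check both that the stated support $\{\floor{S/m},\ceil{S/m}\}$ collapses to the single value $q$, keeping it consistent with condition (2) allowing $\max=\min$, and that in the uniqueness argument $c=0$ correctly forces every entry to equal $a=q$. Insisting that $a$ be an attained value (which rules out $c=m$) is precisely what pins the remainder $c$ into the range $\{0,\ldots,m-1\}$ and legitimizes the appeal to uniqueness of the Euclidean representation.
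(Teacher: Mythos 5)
Your proof is correct and complete. Note that the paper itself offers no proof of this lemma at all---it is stated as folklore, ``implicit in'' Douthett--Krantz and Clough--Douthett \cite[Lemma 1.9]{CloughDouthett}---so there is no in-paper argument to compare against; your division-algorithm argument is precisely the standard one that the folklore attribution presupposes. All the key points check out: writing $S=qm+t$ with $q=\floor{\frac{S}{m}}$ and $0\le t\le m-1$ handles negative $S$ correctly; the sum $t(q+1)+(m-t)q=S$ verifies existence; integrality plus $\max-\min\le 1$ forces every entry of an arbitrary solution into $\{a,a+1\}$ with $a$ the minimum; and your observation that $a$ is attained (so the count $c$ of entries equal to $a+1$ satisfies $c\le m-1$) is exactly what legitimizes invoking uniqueness of the Euclidean representation to conclude $a=q$, $c=t$. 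Your handling of the degenerate case $t=0$, where $\left\{\floor{\frac{S}{m}},\ceil{\frac{S}{m}}\right\}$ collapses to a singleton, and your verification of the two multiplicity formulas $m\left(\floor{\frac{S}{m}}+1\right)-S$ and $S-m\floor{\frac{S}{m}}$ in the case $t>0$ are both accurate. Nothing is missing.
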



\subsection{Majorization}\label{section_majorization}
Let us define several variants of the \emph{majorization} order on tuples of real numbers. For background and more details on majorization, see \cite{MR2759813}. Suppose $x$ is a finite tuple $(x_1,\ldots,x_n)$ of real numbers. Let $x_{(1)}\leq \cdots\leq x_{(n)}$ denote the components of $x$ in non-decreasing order and let $x^{\uparrow}=(x_{(1)},\ldots,x_{(n)}).$
Similarly, let $x_{[1]}\geq\cdots\geq x_{[n]}$
denote the components of $x$ in non-increasing order and we let $x^{\downarrow}=(x_{[1]},\ldots,x_{[n]})$.


\begin{definition}\label{definition_maj} Suppose $x,y\in\R^n$. We define 
\[x\maj y \ \ \text{if}\ \ \begin{cases}\displaystyle\sum_{i=1}^kx_{[i]}\leq\sum_{i=1}^ky_{[i]}$ for $k=1,\ldots,n-1,\text{ and}\vspace{3mm}\\ \displaystyle\sum_{k=1}^nx_{[i]}=\sum_{k=1}^ny_{[i]}.\end{cases}\] 
When $x\maj y$ we say that $x$ is \emph{majorized} by $y$, or that $y$ \emph{majorizes} $x$. Similarly, we define 
\[x\submaj y \ \ \text{if}\ \ \sum_{i=1}^kx_{[i]}\leq\sum_{i=1}^ky_{[i]} \text{ for } k=1,\ldots,n,\] and when $x\submaj y$ we say that $x$ is \emph{weakly submajorized} by $y$, or that $y$ \emph{weakly submajorizes} $x$. Furthermore, we let \[x\supermaj y \ \ \text{if}\ \ \sum_{i=1}^kx_{(i)}\geq\sum_{i=1}^ky_{(i)} \text{ for } k=1,\ldots,n,\] and when $x\supermaj y$ we say that $x$ is \emph{weakly supermajorized} by $y$, or that $y$ \emph{weakly supermajorizes} $x$. \end{definition}

\begin{remark}
Let us note that the notation $x\maj y$, for tuples $x,y\in\R^n$, may be somewhat counterintuitive because $\maj$ is not a strict ordering since $x\maj x$ for all $x\in\R^n$. Furthermore, the notation $x\supermaj y$ and $x\submaj y$ may also be misleading because, for example $(5,5)\supermaj (4,4)$. Nonetheless, this notation is widely used (see \cite{MR2759813}), and so we adopt it here.
\end{remark}

\begin{remark}
For a \emph{multiset} $x=[x_1,\ldots,x_n]$ of real numbers with $n$ elements, as above, we let $x^\uparrow=(x_{(1)},\ldots,x_{(n)})$ and $x^{\downarrow}=(x_{[1]},\ldots,x_{[n]})$. Notice that for two multisets $x$ and $y$ with $n$ elements, the definitions of $x\maj y$, $x\submaj y$ and $x\supermaj y$ make sense as stated in Definition \ref{definition_maj}, because these notions are defined for tuples without mentioning the particular indexing. In what follows, for such multisets of real numbers we will use the notation $x\maj y$, $x\submaj y$ and $x\supermaj y$ without further comment.
\end{remark}

Classical results of Schur \cite{Schur1923, MR0462892} (see \cite[Proposition I.3.C.1.a]{MR2759813}), Hardy Littlewood and P\'olya \cite{HLP} (see \cite[Proposition I.4.B.1]{MR2759813}) and Tomi\'c \cite{MR0031006} (see \cite[Proposition I.4.B.2]{MR2759813}), established the first connections between the majorization order on tuples and certain sums obtained from convex functions. From these results, we easily obtain the following, where for a set of vertices $A$ in a graph $G$ we let $D(A)$ be the \emph{multiset} of all pairwise distances between vertices in $A$, that is,
\[D(A)=\left[\,d(u,v)\st \{u,v\}\in \binom{A}{2}\,\right].\] 

\begin{corollary}\label{corollary_maj_consistent_with_energy}
Suppose $G$ is a finite simple connected graph and $A$ and $B$ are sets of vertices in $G$ with $|A|=|B|$.
\begin{enumerate}[\normalfont(1)]
\item $D(A)\maj D(B)$ if and only if $E_g(A)\leq E_g(B)$ for all convex functions $g:\{1,\ldots,\diam(G)\}\to \R$.
\item $D(A)\submaj D(B)$ if and only if $E_g(A)\leq E_g(B)$ for all increasing convex functions $g:\{1,\ldots,\diam(G)\}\to\R$.
\item $D(A)\supermaj D(B)$ if and only if $E_g(A)\leq E_g(B)$ for all decreasing convex functions $g:\{1,\ldots,\diam(G)\}\to\R$.
\end{enumerate}
\end{corollary}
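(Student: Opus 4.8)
The plan is to recognize the statement as the finite-domain incarnation of the classical majorization--convexity duality, and to transfer the three classical equivalences (Hardy--Littlewood--P\'olya for $\maj$, and Tomi\'c's weak versions for $\submaj$ and $\supermaj$) to functions defined only on the integer grid $\{1,\ldots,\diam(G)\}$. Write $d=\diam(G)$ and $N=\binom{|A|}{2}=\binom{|B|}{2}$, and regard the multisets $D(A)$ and $D(B)$ as their non-increasing rearrangements $x=D(A)^{\downarrow}$ and $y=D(B)^{\downarrow}$ in $\{1,\ldots,d\}^N$. The observation to record at the outset is that $E_g(A)=\sum_{i=1}^N g(x_i)$ and $E_g(B)=\sum_{i=1}^N g(y_i)$ for every $g\colon\{1,\ldots,d\}\to\R$, so that all three parts ask precisely when $\sum_i g(x_i)\le \sum_i g(y_i)$ holds for a prescribed class of $g$. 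I would prove the three parts in parallel, since they differ only in the class of test functions (all convex; increasing convex; decreasing convex).

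For the forward implications I would use the cited classical theorems directly. The point is that every $g\colon\{1,\ldots,d\}\to\R$ that is convex on the grid extends to a continuous convex function $\tilde g\colon\R\to\R$ by linear interpolation on $[1,d]$ and affine continuation outside; this extension preserves monotonicity, so an increasing (resp.\ decreasing) grid-convex $g$ extends to an increasing (resp.\ decreasing) continuous convex function. Since $x$ and $y$ take values in $\{1,\ldots,d\}$, we have $E_g(A)=\sum_i\tilde g(x_i)$ and $E_g(B)=\sum_i\tilde g(y_i)$, so the required inequality $E_g(A)\le E_g(B)$ is exactly the conclusion of the classical characterization applied to $\tilde g$, given $x\maj y$ (resp.\ $x\submaj y$, $x\supermaj y$).

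The converse implications are where the finite domain must be handled with care, and this is the step I expect to be the crux. Here I cannot simply invoke the classical theorems, since their converse directions a priori require testing against \emph{all} continuous convex functions, whereas I am only given the inequality for grid-convex $g$. Instead I would exhibit explicit grid-convex test functions and run the standard support-function argument. For each integer $t$ with $1\le t\le d$ set $p_t(s)=\max(s-t,0)$ and $q_t(s)=\max(t-s,0)$; these are grid-convex, with $p_t$ nondecreasing and $q_t$ nonincreasing. The elementary identities $\sum_{i=1}^k z_{[i]}=\min_{t}\bigl(kt+\sum_{i=1}^N (z_i-t)_+\bigr)$ and $\sum_{i=1}^k z_{(i)}=\max_{t}\bigl(kt-\sum_{i=1}^N(t-z_i)_+\bigr)$, whose optima are attained at the \emph{integers} $t=z_{[k]}$ and $t=z_{(k)}$ respectively, let me convert the hypothesized inequalities $E_{p_t}(A)\le E_{p_t}(B)$ and $E_{q_t}(A)\le E_{q_t}(B)$ into the partial-sum inequalities defining $\submaj$ and $\supermaj$. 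The decisive point is that, because all entries of $x$ and $y$ are integers in $\{1,\ldots,d\}$, the optimal shift $t$ in each identity already lies in $\{1,\ldots,d\}$, so testing against the grid-convex functions $p_t,q_t$ alone suffices. Finally, for part (1) I would additionally feed in the affine (hence convex) functions $g(s)=s$ and $g(s)=-s$ to force $\sum_i x_i=\sum_i y_i$; combined with the weak-submajorization inequalities this upgrades $\submaj$ to $\maj$, completing the argument.
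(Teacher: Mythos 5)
Your proposal is correct and takes essentially the same route as the paper: the paper derives the corollary directly from the cited classical results of Schur, Hardy--Littlewood--P\'olya and Tomi\'c (via \cite{MR2759813}), which is exactly the transfer you carry out. The extra care you supply---piecewise-linear extension of grid-convex $g$ for the forward implications, and the integer-shift test functions $p_t$, $q_t$ together with the affine functions $\pm s$ for the converses---simply makes rigorous the finite-domain details that the paper's ``from these results, we easily obtain'' leaves implicit.
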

Additionally applying \cite[Theorem I.3.A.8.a]{MR2759813} we obtain the following corollary.
\begin{corollary}\label{corollary_we_need}
Suppose $I\subseteq\R$ is an interval, $g:I\to\R$ is a function and
\[\phi(x)=\sum_{i=1}^n g(x_i).\]
\begin{enumerate}
\item Suppose $g$ is strictly convex on $I$. Then for all $x,y\in I^n$, whenever $x\maj y$ and $x$ is not a permutation of $y$, it follows that $\phi(x)<\phi(y)$.
\item Suppose $g$ is strictly decreasing and strictly convex on $I$. Then for all $x,y\in I^n$, whenever $x\supermaj y$ and $x$ is not a permutation of $y$, it follows that $\phi(x)<\phi(y)$.
\item Suppose $g$ is strictly increasing and strictly convex on $I$. Then for all $x,y\in I^n$, whenever $x\submaj y$ and $x$ is not a permutation of $y$, it follows that $\phi(x)<\phi(y)$.
\end{enumerate}
\end{corollary}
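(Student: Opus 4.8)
The plan is to obtain part (1) directly from the cited strict Schur-convexity result and then to bootstrap parts (2) and (3) from part (1) using the standard reduction of weak majorization to ordinary majorization.

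For part (1), since $g$ is strictly convex on $I$, the symmetric function $\phi$ is strictly Schur-convex, and this is exactly the assertion of \cite[Theorem I.3.A.8.a]{MR2759813}: whenever $x\maj y$ and $x$ is not a permutation of $y$, one has $\phi(x)<\phi(y)$. So (1) requires nothing beyond quoting the theorem.

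For part (3), I would first recall the classical fact (see \cite{MR2759813}) that $x\submaj y$ holds if and only if there is a $u$ with $x\le u$ componentwise and $u\maj y$. Moreover, the first majorization inequality forces $u_{[1]}\le y_{[1]}$, so that $x_i\le u_i\le\max_j y_j$ for every $i$; since $x_i$ and $\max_j y_j$ both lie in the interval $I$, this shows $u\in I^n$, so that $\phi(u)$ is defined. I would then split on whether $u$ is a permutation of $y$. If it is, then $u$ is not a permutation of $x$ (as $x$ is not a permutation of $y$), so $x\le u$ with $x\ne u$; strict monotonicity of $g$ gives $\phi(x)<\phi(u)=\phi(y)$. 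If $u$ is not a permutation of $y$, then part (1) yields $\phi(u)<\phi(y)$, while $x\le u$ together with monotonicity of $g$ gives $\phi(x)\le\phi(u)$, so again $\phi(x)<\phi(y)$. Part (2) is entirely symmetric: here $x\supermaj y$ is equivalent to the existence of a $v\in I^n$ with $v\le x$ and $v\maj y$ (equivalently $-x\submaj -y$), and since $g$ is now strictly decreasing the inequality $v\le x$ gives $\phi(x)\le\phi(v)$; the same two-case analysis, using strict convexity of $g$ in one branch and strict decrease in the other, delivers $\phi(x)<\phi(y)$.

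The routine ingredients are the monotonicity estimates of the form $\phi(x)\le\phi(u)$ and the reduction lemma for weak majorization, both of which I would simply quote. The one place that requires care is the strictness bookkeeping: I must guarantee that in each branch at least one of the two inequalities chaining $\phi(x)$ to $\phi(y)$ is strict. This is precisely where the hypothesis that $x$ is not a permutation of $y$ is consumed, since it forces either the auxiliary vector to differ from $x$ (activating strict monotonicity) or the auxiliary vector to differ from $y$ (activating the strict convexity conclusion of part (1)).
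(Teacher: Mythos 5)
Your proposal is correct, but it is worth noting that the paper itself does not write out a proof at all: Corollary~\ref{corollary_we_need} is obtained there purely by citation, combining the classical propositions quoted just before Corollary~\ref{corollary_maj_consistent_with_energy} with the strictness theorem \cite[Theorem I.3.A.8.a]{MR2759813}. What you have supplied is the standard argument that this citation outsources: part (1) is exactly the strict Schur-convexity statement for $\phi(x)=\sum_i g(x_i)$, and you reduce the weak-majorization parts (2) and (3) to it via the Hardy--Littlewood--P\'olya reduction lemma ($x\submaj y$ iff $x\le u$ componentwise for some $u$ with $u\maj y$, and dually for $\supermaj$), with the two-case split on whether the auxiliary vector is a permutation of $y$ handling strictness. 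Your bookkeeping is right where it needs to be: the case analysis correctly consumes the hypothesis that $x$ is not a permutation of $y$ (it forces either $x\ne u$, activating strict monotonicity, or $u$ not a permutation of $y$, activating part (1)), and your verification that $u\in I^n$ --- via $u_{[1]}\le y_{[1]}$ so that $x_i\le u_i\le\max_j y_j$ with both endpoints in the interval $I$ --- is the one genuinely non-quotable step, which you carry out. For part (2) you assert symmetry without detail; to make it explicit, the needed bound is $v_{(1)}\ge y_{(1)}$, which follows from $v\maj y$ because the bottom partial sums of $v$ dominate those of $y$, giving $\min_j y_j\le v_i\le x_i$ and hence $v\in I^n$. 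In short, the paper's route is shorter (a pointer into Marshall--Olkin--Arnold), while yours is self-contained given only the strict result for $\maj$, and it makes visible exactly where strict convexity versus strict monotonicity of $g$ is used in each of the three parts.
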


\subsection{Maximal evenness and majorization}

Let us define two operations that we will need to perform on multisets to prove Theorem \ref{theorem_min_energy_cycles}.

\begin{definition} The \emph{up transfer} $\Up$ and the \emph{Robin Hood\footnote{The Robin Hood transfer is so named since, as noted in \cite[Page 7]{MR2759813} it ``mirrors precisely an operation by that `worthy' outlaw in Sherwood Forest.'' The same function is also called a \emph{$T$-transform}, a \emph{Dalton transfer} or a \emph{pinch}.} transfer} $\Rh$ are functions from the collection of all finite multisets of real numbers to itself. We define $\Rh(\emptyset)=\Up(\emptyset)=\emptyset$. Suppose $x$ is a finite multiset of real numbers with $|x|=m\geq 1$.
\begin{enumerate}
\item If $m=1$ then $\Rh(x)=x$. If $m\geq 2$, let 
\[\Rh(x)=[\,x_{(1)}+1,x_{(2)},\ldots,x_{(m-1)},x_{(m)}-1\,].\]
\item Let $\Up(x)=[\,y_1,\ldots,y_m\,]$ where $y_{(1)}=x_{(1)}+1$ and $y_{(i)}=x_{(i)}$ for $1<i\leq m$.
\end{enumerate}
\end{definition}

Recall (see Lemma \ref{lemma_consecutive}) that for integers $S$ and $m$ with $m\geq 1$, $D(m,S)$ is the unique multiset of integers with cardinality $m$ whose sum is $S$ such that no two members of the multiset differ by more than $1$.

\begin{lemma}\label{lemma_simple_operations}
If $S$ is an integer and $x=[x_1,\ldots,x_m]$ is a nonempty multiset of integers with cardinality $m\geq 1$ such that $\sum_{i=1}^mx_i\leq S$, then $D(m,S)\supermaj x$.
\end{lemma}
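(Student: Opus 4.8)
The plan is to unwind the definition of $\supermaj$ and reduce the whole statement to a one-parameter inequality for each truncation level. By Definition \ref{definition_maj}, the assertion $D(m,S)\supermaj x$ means exactly that for every $k\in\{1,\dots,m\}$ the sum of the $k$ smallest entries of $D(m,S)$ is at least the sum of the $k$ smallest entries of $x$. Writing $f_z(k)=\sum_{i=1}^k z_{(i)}$ for a multiset $z$, the goal is therefore to prove $f_x(k)\le f_{D(m,S)}(k)$ for each such $k$. It is tempting to bound $f_x(k)\le\frac{k}{m}\sum_i x_i\le kS/m$, using that the average of the $k$ smallest entries is at most the overall average; but this one-sided estimate is too weak (it ignores that $x$ must also carry $m-k$ \emph{large} entries) and in general does not reach $f_{D(m,S)}(k)$.

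Instead I would fix $k$ and set $t:=x_{(k)}$, the $k$-th smallest entry of $x$. Two elementary observations bound $f_x(k)$ from above. First, each of the $k$ smallest entries is at most $t$, so $f_x(k)\le kt$. Second, each of the remaining $m-k$ entries is at least $t$, so $\sum_{i>k}x_{(i)}\ge (m-k)t$, and since $\sum_i x_i\le S$ this yields $f_x(k)\le S-(m-k)t$. Writing $\psi(s)=\min\{ks,\;S-(m-k)s\}$, these combine to $f_x(k)\le\psi(t)$ with $t\in\Z$, whence $f_x(k)\le\max_{s\in\Z}\psi(s)$.

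It then remains to identify this integer maximum with $f_{D(m,S)}(k)$. The function $\psi$ is concave and piecewise linear (the minimum of an increasing and a decreasing affine function), with peak at $s=S/m$, so its maximum over $\Z$ is attained at $\floor{S/m}$ or $\ceil{S/m}$. Using the explicit description of $D(m,S)$ from Lemma \ref{lemma_consecutive}—its entries are $\floor{S/m}$ and $\ceil{S/m}$ with the stated multiplicities—I would compute $f_{D(m,S)}(k)$ directly and verify both that $f_{D(m,S)}(k)=\psi\bigl(D(m,S)_{(k)}\bigr)$ and that $D(m,S)_{(k)}$ is precisely the integer maximizer of $\psi$. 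Concretely, with $q=\floor{S/m}$ and $r=S-mq$, one has $D(m,S)_{(k)}=q$ when $k\le m-r$ and $D(m,S)_{(k)}=q+1$ when $k>m-r$; in the first case $f_{D(m,S)}(k)=kq$, in the second $f_{D(m,S)}(k)=kq+(k+r-m)$, and a short check shows each equals $\max_{s\in\Z}\psi(s)$ (the case $k=m$ being immediate since $f_x(m)=\sum_i x_i\le S=f_{D(m,S)}(m)$). Combining the two parts gives $f_x(k)\le\max_{s\in\Z}\psi(s)=f_{D(m,S)}(k)$ for every $k$, which is exactly $D(m,S)\supermaj x$.

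The main obstacle—really the only nonroutine point—is recognizing that the naive averaging estimate is insufficient and replacing it with the two-sided bound $\psi(t)=\min\{kt,\;S-(m-k)t\}$ assembled from both the upper bound on the small entries and the lower bound on the necessarily large discarded entries. Once this tent-shaped bound is in hand, the integrality of $t=x_{(k)}$ forces the maximum to land exactly on the balanced multiset $D(m,S)$, and everything that remains is bookkeeping with the multiplicity formulas of Lemma \ref{lemma_consecutive}.
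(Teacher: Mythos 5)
Your proof is correct, and it takes a genuinely different route from the paper's. The paper argues dynamically: starting from $x$, it applies the up transfer $\Up$ until the sum reaches $S$, then the Robin Hood transfer $\Rh$ until the multiset stabilizes, identifies the result with $D(m,S)$ via Lemma \ref{lemma_consecutive}, and concludes from the monotonicity facts $\Up(y)\supermaj y$ and that $z\supermaj w$ implies $\Rh(z)\supermaj w$. You instead verify the defining partial-sum inequalities of $\supermaj$ statically: fixing $k$ and $t=x_{(k)}$, the two-sided tent bound $f_x(k)\le\psi(t)=\min\{kt,\,S-(m-k)t\}$, together with integrality of $t$, gives $f_x(k)\le\max_{s\in\Z}\psi(s)$, and your bookkeeping does check out: with $q=\floor{S/m}$ and $r=S-mq$ one gets $\psi(q)=kq$ and $\psi(q+1)=kq+(k+r-m)$, so the integer maximum is $kq+\max\{0,\,k+r-m\}$, which by the multiplicity count in Lemma \ref{lemma_consecutive} is exactly the sum of the $k$ smallest entries of $D(m,S)$ (and $k=m$ is the trivial case $f_x(m)\le S$). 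Your remark that the naive averaging bound $f_x(k)\le kS/m$ is insufficient is also on target: that real-valued maximum of $\psi$ strictly exceeds $f_{D(m,S)}(k)$ whenever $m\nmid S$ and $k<m$, so the integrality of $x_{(k)}$ is doing genuine work. As for what each approach buys: the paper's transfer argument is shorter and sits squarely in the classical majorization tradition, but it carries dynamical obligations --- termination of the $\Rh$-iteration and the two monotonicity facts --- and, as literally defined, $\Rh$ does not fix constant multisets (e.g.\ $\Rh([c,c])=[c+1,c-1]$), so the stabilization step there requires a small repair in the case $m\mid S$; your static argument has no dynamics to control, is fully self-contained, and yields as a byproduct the clean extremal identity $\sum_{i=1}^k D(m,S)_{(i)}=\max_{s\in\Z}\min\{ks,\,S-(m-k)s\}$, which characterizes $D(m,S)$ as the multiset with pointwise-largest lower partial sums among integer multisets of size $m$ summing to at most $S$.
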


\begin{proof}
Since $\sum x\leq S$, we may let $\ell_1$ be the least integer $i\geq 0$ such that $\sum\Up^i(x)=S$. Notice that for each integer $i\geq 0$ the multiset $\Rh^i\Up^{\ell_1}(x)$ has cardinality $m$ and sums to $S$. Let $\ell_2$ be the least integer $i\geq 0$ such that $\Rh^{i+1}\Up^{\ell_1}(x)=\Rh^i\Up^{\ell_1}(x)$. Notice that $\Rh^{\ell_2}\Up^{\ell_1}(x)$ must consist of either a single integer or two consecutive integers because otherwise,
\[\max(\Rh^{\ell_2}\Up^{\ell_1}(x))-\min(\Rh^{\ell_2}\Up^{\ell_1}(x))\geq 2\]
and hence $\Rh\Rh^{\ell_2}\Up^{\ell_1}(x)\neq \Rh^{\ell_2}\Up^{\ell_1}(x)$, a contradiction. Therefore, by Lemma \ref{lemma_consecutive}, we obtain $\Rh^{\ell_2}(\Up^{\ell_1}(x))=D(m,S)$. Since $\Up(y)\supermaj y$ for any finite multiset of real numbers $y$ and since $z\supermaj w$ implies $\Rh(z)\supermaj w$ for any finite multisets of real numbers $z$ and $w$ of the same cardinality, it follows that \[D(m,S)=\Rh^{\ell_2}\Up^{\ell_1}(x)\supermaj x.\] \end{proof}

The following theorem is due to Barrett \cite{Barrett} in the case where $g(r)=\frac{1}{r}$ and to Douthett and Krantz (see \cite[Theorem 2]{MR1401228} and \cite[Corollary 3]{MR2408358}) in a modified form wherein the energy of a set of vertices in $C_n$ is taken to be a sum over all ordered pairs and clockwise distances are used (as opposed to geodesic distances), whereas here we sum over all unordered pairs. Although the following theorem was essentailly already known, our proof is new, and the use of the majorization order leads to a simpler presentation which could potentially find applications elsewhere.

\begin{theorem}\label{theorem_min_energy_cycles}Suppose $ g:[1,\diam(C_n)]\to\R$ is strictly decreasing and strictly convex, and let $A$ be a set of vertices of $C_n$ with $|A|=m$. Then $A$ is a minimizer of $E_g$ if and only if it is maximally even.\end{theorem}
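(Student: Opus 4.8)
The plan is to localize the problem to a single ``span'' at a time and then recombine. Writing $\phi_g(X)=\sum_{d\in X}g(d)$ for a finite multiset $X$ of integers, the starting point is the bookkeeping identity
\[E_g(A)=\tfrac12\sum_{k=1}^{m-1}\phi_g(\sigma_k(A)),\]
which holds because every ordered pair $(u,v)$ of distinct vertices of $A$ has a unique $A$-span $\spn_A(u,v)\in\{1,\dots,m-1\}$ and contributes $g(d(u,v))$ to exactly one $\sigma_k(A)$, while each unordered pair is counted twice. Since the reverse of an ordered pair has complementary span but the same geodesic distance, $\sigma_k(A)=\sigma_{m-k}(A)$, and this collapses to $E_g(A)=\sum_{k=1}^{\floor{m/2}}c_k\,\phi_g(\sigma_k(A))$ with strictly positive coefficients $c_k$ (equal to $1$, except $c_{m/2}=\tfrac12$ when $m$ is even). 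Thus it suffices to control each multiset $\sigma_k(A)$ of geodesic distances for $1\le k\le\floor{m/2}$, and the tool for comparing these across different sets of size $m$ will be weak supermajorization, together with Corollary \ref{corollary_we_need}(2).

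Next I would produce, for each such $k$, a fixed ``champion'' multiset that is weakly supermajorized by $\sigma_k(A)$ for \emph{every} $A$. Set $S_k=nk$ when $k<\tfrac m2$ or $n$ is even, and $S_k=(n-1)k$ when $k=\tfrac m2$ and $n$ is odd, and let $D(m,S_k)$ be the balanced multiset of Lemma \ref{lemma_consecutive}. The key inequality is $\sum\sigma_k(A)\le S_k$: since $d(u,v)\le d^*(u,v)$ and the clockwise distances of span $k$ satisfy $\sum\sigma^*_k(A)=kn$ (each of the $m$ gaps of $A$ is traversed $k$ times), one gets $\sum\sigma_k(A)\le kn$, which is exactly the bound except in the middle case $k=\tfrac m2$ with $n$ odd, where instead each of the $\tfrac m2$ antipodal pairs contributes a geodesic distance at most $\floor{\tfrac n2}=\tfrac{n-1}2$ and is counted twice, giving $\sum\sigma_{m/2}(A)\le(n-1)\tfrac m2=(n-1)k$. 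By Lemma \ref{lemma_simple_operations} this bound yields $D(m,S_k)\supermaj\sigma_k(A)$ for every $A$. Finally, for a maximally even $M$, Theorem \ref{theorem_J_is_maximally_even} writes $M=J^r_{n,m}$, and Corollary \ref{corollary_sum_of_spec_of_J_rep} together with Corollary \ref{corollary_spec_of_J_rep} show that $\sigma_k(M)$ has sum $S_k$ and support contained in two consecutive integers; the uniqueness clause of Lemma \ref{lemma_consecutive} then forces $\sigma_k(M)=D(m,S_k)$. Hence the champions are attained precisely by the maximally even sets.

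With these in hand both implications fall out at once. For every $A$, applying Corollary \ref{corollary_we_need}(2) to the strictly decreasing strictly convex $g$ (the relevant distances all lie in $[1,\diam(C_n)]$, so the hypothesis applies) gives $\phi_g(D(m,S_k))\le\phi_g(\sigma_k(A))$ for each $k$, with strict inequality unless $\sigma_k(A)=D(m,S_k)$. Summing over $1\le k\le\floor{m/2}$ with the positive coefficients $c_k$ of the first paragraph bounds $E_g(A)$ below by the constant $E^*=\sum_{k}c_k\,\phi_g(D(m,S_k))$, with equality if and only if $\sigma_k(A)=D(m,S_k)$ for all such $k$. By the previous paragraph $E^*=E_g(M)$ for any maximally even $M$, so $E^*$ is the minimum value of $E_g$; and by Corollary \ref{corollary_spec_characterization} the equality condition $\sigma_k(A)=D(m,S_k)$ for all $k\le\floor{m/2}$ is exactly maximal evenness of $A$. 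Therefore $A$ minimizes $E_g$ if and only if it is maximally even.

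The main obstacle I anticipate is not any single inequality but the careful matching of the per-span data. Two alignments require attention. First is the parity bookkeeping around the middle span $k=\tfrac m2$ --- the double counting of antipodal pairs and the replacement of $nk$ by $(n-1)k$ when $n$ is odd --- which must be reconciled with the exact sums recorded in Corollary \ref{corollary_sum_of_spec_of_J_rep}. Second is verifying that the equality case emerging from Corollary \ref{corollary_we_need}(2), namely $\sigma_k(A)=D(m,S_k)$ for every $k$, translates via the support computation of Lemma \ref{lemma_consecutive} and Corollary \ref{corollary_spec_characterization} into precisely the definition of maximal evenness and nothing weaker. Getting these two alignments right is where the real work lies; the majorization inequalities themselves are then essentially immediate from Lemma \ref{lemma_simple_operations}.
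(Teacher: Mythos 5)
Your proposal is correct and takes essentially the same route as the paper's proof: decompose $E_g$ by $A$-span, use Lemma \ref{lemma_consecutive} and Lemma \ref{lemma_simple_operations} to show the balanced multiset $D(m,S_k)$ weakly supermajorizes $\sigma_k(A)$ for every $A$, and apply Corollary \ref{corollary_we_need}(2) together with Corollaries \ref{corollary_spec_of_J_rep}, \ref{corollary_sum_of_spec_of_J_rep} and \ref{corollary_spec_characterization} to identify the equality case with maximal evenness. The only difference is organizational (you phrase it as a single uniform lower bound with equality analysis rather than two directions) and that you make explicit two steps the paper leaves implicit, namely the verification of $\sum\sigma_k(A)\le S_k$ and the factor-$\tfrac12$ bookkeeping at the middle span $k=\tfrac m2$.
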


\begin{proof}
For the forward direction, suppose $A$ is not maximally even. Then, by Corollary \ref{corollary_spec_characterization}, the collection $K$ of all integers $k$ with $1\leq k\leq \frac{m}{2}$ for which $\sigma_k(A)\neq\sigma_k(J^0_{n,m})$ is nonempty. Suppose $1\leq k\leq \frac{m}{2}$. If $k\notin K$, then $\sigma_k(J^0_{n,m})=\sigma_k(A)$ and hence $E_{g,k}(J^0_{n,m})=E_{g,k}(A)$. Suppose that $k\in K$. Then $\sum\sigma_k(A)\leq S_k$ where $S_k=\sum\sigma_k(J^0_{n,m})$. Since $\sigma_k(J^0_{n,m})$ consists of one or two consecutive integers by Corollary \ref{corollary_spec_of_J_rep}, it follows by Lemma \ref{lemma_consecutive} and Lemma \ref{lemma_simple_operations} that $\sigma_k(J^0_{n,m})=D(m,S_k)\supermaj \sigma_k(A)$. Using the fact that $\sigma_k(A)\neq \sigma_k(J^0_{n,m})$, Corollary \ref{corollary_we_need} implies $\sum g \sigma_k(J^0_{n,m})<\sum g \sigma_k(A)$ and hence $E_{g,k}(J^0_{n,m})<E_{g,k}(A)$. Since $K\neq\emptyset$, 
\[E_g(J^0_{n,m})=\sum_{k=1}^{\ell} E_{g,k}(J^0_{n,m})<\sum_{k=1}^{\ell}E_{g,k}(A)=E_g(A),\]
where $\ell=\floor{\frac{m}{2}}$.

For the reverse direction, suppose that $A$ is maximally even. Then for all integers $k$ with $1\leq k \leq\frac{m}{2}$ we have $\sigma_k(A)= \sigma_k(J^0_{n,m})$. We must show that $A$ is a minimizer of $E$. Let $B\subseteq V(C_n)$ with $|B|=m$ be a minimizer of $E$. By the above argument, it follows that $\sigma_k(B)=\sigma_k(J^0_{n,m})$ for $1\leq k\leq\frac{m}{2}$. Thus, for each integer $k$ with $1\leq k\leq\frac{m}{2}$, the multisets $\sigma_k(A)$ and $\sigma_k(B)$ satisfy the conditions of Lemma \ref{lemma_consecutive}, and hence $\sigma_k(A)=\sigma_k(B)=\sigma_k(J^0_{n,m})$. Therefore, $E_g(A)=E_g(B)$, so $A$ is a minimizer of $E_g$.
\end{proof}

Consider the strictly decreasing strictly convex function $g:(0,\infty)\to\R$ defined by $g(r)=\log\left(\frac{1}{r}\right)$. Let $A$ be a set of vertices in a finite simple connected graph $G$. Since
\[E_g(A)=-\sum_{\{u,v\}\in\binom{A}{2}}\log\left(d(u,v)\right)=-\log\left(\prod_{\{u,v\}\in\binom{A}{2}}d(u,v)\right),\]
it easily follows that $A$ is a minimizer of $E_g$ if and only if $A$ is a maximizer of the function $F$ defined by 
\[F(A)=\prod_{\{u,v\}\in\binom{A}{2}}d(u,v).\]
Thus, we obtain the following corollary of Theorem \ref{theorem_min_energy_cycles}.

\begin{corollary}
A set of vertices $A$ in $C_n$ is maxmially even if and only if $A$ is a maximizer of the function $F$ defined above.
\end{corollary}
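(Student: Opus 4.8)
The plan is to apply Theorem \ref{theorem_min_energy_cycles} directly to the specific function $g(r)=\log\!\left(\tfrac{1}{r}\right)$, exactly as set up in the discussion preceding the statement. First I would observe that $g(r)=\log(1/r)=-\log r$ restricts to a strictly decreasing, strictly convex function on $[1,\diam(C_n)]$, so the hypotheses of Theorem \ref{theorem_min_energy_cycles} are met. That theorem then yields, for a set $A$ of vertices of $C_n$, the equivalence that $A$ is \emph{maximally even} if and only if $A$ is a minimizer of $E_g$.

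Next I would translate ``minimizer of $E_g$'' into ``maximizer of $F$'' using the identity already recorded above, namely
\[E_g(A)=-\log F(A),\qquad\text{where } F(A)=\prod_{\{u,v\}\in\binom{A}{2}}d(u,v).\]
Since $t\mapsto e^{-t}$ is a strictly decreasing bijection of $\R$ onto $(0,\infty)$, for any fixed cardinality $m$ and any set $B$ with $|B|=m$ we have $F(B)=e^{-E_g(B)}$; hence on the finite collection $\{B\subseteq V(C_n)\st |B|=m\}$ the quantity $E_g(B)$ is minimized at exactly those $B$ at which $F(B)$ is maximized. Therefore $A$ is a minimizer of $E_g$ if and only if $A$ is a maximizer of $F$.

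Chaining the two equivalences gives the claim. There is essentially no obstacle here: the only point deserving a word of care is the strict monotonicity of $-\log$ (equivalently of $e^{-t}$), which is what guarantees that this order-reversing change of variables carries the set of minimizers of $E_g$ of a given cardinality onto the set of maximizers of $F$ of that cardinality, without creating or destroying any extremizers. One might also note in passing that the convention $F(A)=1$ for $|A|\leq 1$ (the empty product) is consistent with $E_g(A)=0$, so no boundary case needs separate treatment.
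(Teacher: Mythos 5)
Your proof is correct and follows essentially the same route as the paper: the identity $E_g(A)=-\log F(A)$ with $g(r)=\log\left(\frac{1}{r}\right)$ strictly decreasing and strictly convex, combined with Theorem \ref{theorem_min_energy_cycles}, is exactly the paper's argument. Your added remarks on strict monotonicity of $-\log$ and the empty-product convention are fine but not needed beyond what the paper records.
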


\section{Complements of minimizers and maximizers}\label{section_complements}

In this section, building on techniques used by Douthett and Krantz \cite[Theorem 1]{MR2408358} and by Althuis and G\"obel to prove a \emph{generalized hexachordal theorem} \cite[Theorem 1]{hexchord_paper}, we generalize the following theorem of Clough and Douthett which implies that the complement of every maximially even set in a cycle is maximally even.

\begin{theorem}[{Clough and Douthett \cite[Theorem 3.3]{CloughDouthett}}]\label{theorem_complements_cd}
For any positive integers $m\leq n$ and any non-negative integer $r$ with $0\leq r\leq m-1$ we have 
\[C_n\setminus J^r_{n,m}=J^{n-r-1}_{n,n-m}.\]
\end{theorem}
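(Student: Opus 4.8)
The plan is to prove the set identity by comparing, for each $v\in\{0,\ldots,n-1\}$, the cumulative counting functions
\[A(v)=\left|J^r_{n,m}\cap\{0,\ldots,v\}\right|\qquad\text{and}\qquad B(v)=\left|J^{n-r-1}_{n,n-m}\cap\{0,\ldots,v\}\right|,\]
and showing that $A(v)+B(v)=v+1$ for every such $v$. Since both $J^r_{n,m}$ and $J^{n-r-1}_{n,n-m}$ are subsets of $V(C_n)=\{0,\ldots,n-1\}$, taking successive differences of this relation gives $\mathbf 1[v\in J^r_{n,m}]+\mathbf 1[v\in J^{n-r-1}_{n,n-m}]=1$ for each $v$; as each indicator lies in $\{0,1\}$, every vertex then lies in exactly one of the two sets, which is precisely the assertion $C_n\setminus J^r_{n,m}=J^{n-r-1}_{n,n-m}$.

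To compute $A(v)$, I first note that because $m\le n$ the map $i\mapsto\floor{(ni+r)/m}$ is strictly increasing on $\{0,\ldots,m-1\}$ (consecutive values differ by at least $\floor{n/m}\ge 1$), so $J^r_{n,m}$ has exactly $m$ elements and
\[A(v)=\#\{i:0\le i<m,\ \floor{(ni+r)/m}\le v\}=\#\{i:0\le i<m,\ ni<m(v+1)-r\}.\]
Unwinding this with the elementary identity $\#\{i\in\Z_{\ge 0}:i<X\}=\ceil{X}$ (valid for $X>0$), together with a short floor/ceiling manipulation, yields the closed form $A(v)=\floor{(mv+m-1-r)/n}+1$. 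Carrying out the identical computation for $B(v)$, with $m$ replaced by $n-m$ and $r$ replaced by $n-r-1$, and simplifying $\ceil{v-(mv+m-1-r)/n}=v-\floor{(mv+m-1-r)/n}$, gives $B(v)=v-\floor{(mv+m-1-r)/n}$, so that $A(v)+B(v)=v+1$ exactly, as desired.

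A conceptually transparent reformulation, which I would use to sanity-check the bookkeeping, is that $v\in J^r_{n,m}$ if and only if the block of $m$ consecutive integers $\{mv-r,\ldots,mv+m-1-r\}$ contains a multiple of $n$. For $0\le r\le m-1$ these blocks tile the interval $[-r,mn-1-r]$, which contains exactly the $m$ multiples $0,n,\ldots,(m-1)n$, and each block, having length $m\le n$, meets at most one of them; this recovers $|J^r_{n,m}|=m$ and makes the formula for $A(v)$ immediate.

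The main obstacle is bookkeeping rather than conceptual. The subtlety is that the complementary parameter $r'=n-r-1$ does not satisfy $0\le r'\le (n-m)-1$ (indeed $r'\ge n-m$ whenever $r\le m-1$), so the clean tiling picture above does not transfer verbatim to $J^{n-r-1}_{n,n-m}$. I must therefore verify directly that the count $B(v)$ is neither forced below $0$ nor capped at $n-m$ for $v$ in range, which amounts to checking the boundary cases $v=0$ and $v=n-1$ and confirming $0\le B(v)\le n-m$ throughout. I would also dispose of the degenerate case $m=n$, where the complement is empty, separately, and confirm at the outset that $0\le n-r-1\le n-1$ so that $J^{n-r-1}_{n,n-m}$ is well defined.
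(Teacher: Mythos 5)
Your argument is correct, but note that the paper does not prove this statement at all: it quotes it from Clough and Douthett \cite[Theorem 3.3]{CloughDouthett}, so there is no internal proof to compare against, and your write-up is a genuinely self-contained alternative. Your counting strategy is sound: since $i\mapsto\floor{\frac{ni+r}{m}}$ is strictly increasing (consecutive values differ by at least $\floor{\frac{n}{m}}\geq 1$), the cumulative counts satisfy $A(v)-A(v-1)=\mathbf 1[v\in J^r_{n,m}]$, so $A(v)+B(v)=v+1$ for all $v$ does force each vertex into exactly one of the two sets, giving disjointness and covering in a single stroke. I verified your closed forms: $\floor{\frac{ni+r}{m}}\leq v \iff ni<m(v+1)-r$ gives $A(v)=\ceil{\frac{m(v+1)-r}{n}}=\floor{\frac{mv+m-1-r}{n}}+1$ (the cap at $m$ never binds since $m(v+1)-r\leq mn$ for $v\leq n-1$, and positivity holds since $m-r\geq 1$), and the analogous count for the complement set reduces to $\ceil{v-\frac{mv+m-1-r}{n}}=v-\floor{\frac{mv+m-1-r}{n}}$, so the telescoping identity holds exactly. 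You also correctly isolate the one real subtlety, namely that $r'=n-r-1$ lies outside the range $[0,n-m-1]$, so the lower clamp $B(v)\geq 0$ and upper clamp $B(v)\leq n-m$ must be checked by hand; one presentational refinement you should make explicit is that checking only $v=0$ and $v=n-1$ suffices because $B(v)$ is nondecreasing in $v$ (the threshold $\frac{(n-m)(v+1)-n+r+1}{n}$ is nondecreasing), and at those endpoints one gets exactly $0$ and $n-m$. In fact your tiling picture transfers to the complementary parameters more readily than you fear: the blocks of length $n-m$ tile $[-r',(n-m)n-1-r']$, and since $0\leq r'\leq n-1$ the multiples of $n$ in that interval are still exactly $0,n,\ldots,(n-m-1)n$, so the same argument applies verbatim; but your direct verification is equally valid. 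Compared with Clough and Douthett's original treatment, your approach buys a short, purely arithmetic proof that never analyzes membership of individual vertices case by case, at the cost of some floor/ceiling bookkeeping; handling the degenerate case $m=n$ separately, as you do, is necessary since $J^{n-r-1}_{n,0}$ falls outside the paper's definition and must be read as the empty set.
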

\noindent 

We will show that for a rather wide class of graphs $G$ and functions $g:\{1,\ldots,\diam(G)\}\to\R$, the complement of every minimizer of $E_g$ is a minimizer of $E_g$.

\begin{definition}\label{definition_ddr} Suppose $G$ is a finite simple connected graph and $u$ is a vertex in $G$. We define the \emph{distance vector of $u$ in $G$} to be the tuple $\vec{D}(u)=(d_1,\ldots,d_L)$ of length $L=|V(G)|-1$ consisting of all distances from $u$ to other vertices in $G$, where $d_1\leq \cdots\leq d_L$. The graph $G$ is \emph{distance degree regular} if whenever $u$ and $v$ are vertices in $G$ we have $\vec{D}(u)=\vec{D}(v)$. 
\end{definition}

Let us discuss another characterization of distance degree regular graphs. Given a vertex $u$ in $G$ and $i\in\{0,\ldots,\diam(G)\}$, the \emph{sphere centered at $u$ of radius $i$} is the set
\[S(u,i)=\{v\in V(G)\st d(u,v)=i\}.\]
The \emph{$i$-degree} of a vertex $u$ in $G$ is the number of occurances of $i$ in $\vec{D}(u)$, or equivalently, the cardinality of the set $S(u,i)$. For an integer $\ell$, we say that $G$ is \emph{$\ell$-regular at distance $i$} if all vertices in $G$ have $i$-degree $\ell$, and $G$ is \emph{regular at distance $i$} if it is $\ell$-regular at distance $i$ for some $\ell$. It is not hard to see that $G$ is distance degree regular if and only if it is regular at distance $i$ for all $i\in\{1,\ldots,\diam(G)\}$.

Bloom et al. \cite{MR0634519} introduced distance degree regular graphs and pointed out that every vertex transitive graph is distance degree regular, but the converse does not hold. Indeed, they showed that for every $r\geq 5$ there exist $r$-regular distance degree regular graphs with trivial automorphism groups. Huilgol et al. \cite{MR3059157} proved that the class of distance degree regular graphs is closed under cartesian products. By applying these results, one can produce a wide array of distance degree regular graphs.

Given a multiset $X$, we let $\mult{X}{i}$ be the number of occurances of $i$ in $X$. Suppose $G$ is a finite connected graph. Recall that, for $A\subseteq V(G)$, $D(A)$ is the multiset of distances $d(u,v)$ such that $\{u,v\}\subseteq A$ and $u\neq v$. For $A,B\subseteq V(G)$ we let $D(A,B)$ denote the multiset of distances $d(u,v)$ such that $\{u,v\}\subseteq V(G)$, $u\neq v$, $|\{u,v\}\cap A|=1$ and $|\{u,v\}\cap B|=1$.

\begin{lemma}\label{lemma_circle}
Suppose a graph $G$ is $\ell$-regular at distance $i$ where $i\in\{1,\ldots, \diam(G)\}$. Then
\[\ell|G|=2|\mult{D(V(G))}{i}|.\]
\end{lemma}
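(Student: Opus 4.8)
The plan is to count, in two different ways, the number of ordered pairs $(u,v)$ of distinct vertices with $d(u,v)=i$. On one hand, summing the $i$-degrees over all vertices gives $\sum_{u\in V(G)}|S(u,i)|$, and since $G$ is $\ell$-regular at distance $i$, every term equals $\ell$, so this sum is $\ell|G|$. On the other hand, each such ordered pair is counted by its unordered version exactly twice, and the number of unordered pairs $\{u,v\}$ with $d(u,v)=i$ is precisely $\mult{D(V(G))}{i}$, the number of occurrences of $i$ in the distance multiset $D(V(G))$. Equating the two counts yields $\ell|G|=2\,\mult{D(V(G))}{i}$.

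Concretely, I would first observe the identity
\[\sum_{u\in V(G)}|S(u,i)|=\left|\left\{(u,v)\st u,v\in V(G),\ u\neq v,\ d(u,v)=i\right\}\right|,\]
which holds because for each fixed $u$ the set $S(u,i)$ is exactly $\{v\st d(u,v)=i\}$, and these ranges partition the ordered pairs by their first coordinate. Next I would use the hypothesis that $|S(u,i)|=\ell$ for every $u$ (this is the definition of $\ell$-regularity at distance $i$), so the left-hand side equals $\ell|G|$.

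For the right-hand side, since $d$ is symmetric, the ordered pairs $(u,v)$ and $(v,u)$ both have distance $i$ and correspond to the single unordered pair $\{u,v\}\in\binom{V(G)}{2}$; thus the number of ordered pairs at distance $i$ is exactly twice the number of unordered pairs at distance $i$. By the definition of $D(V(G))$ as the multiset of all pairwise distances $d(u,v)$ over $\{u,v\}\in\binom{V(G)}{2}$, the number of unordered pairs with $d(u,v)=i$ is $\mult{D(V(G))}{i}$. Combining gives $\ell|G|=2\,\mult{D(V(G))}{i}$ as claimed.

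This is essentially a double-counting argument and I do not anticipate a genuine obstacle; the only point requiring a touch of care is confirming that no term is lost or double-counted when passing between the ordered and unordered formulations, which is handled by the factor of $2$ arising from the symmetry of $d$ and the fact that $u\neq v$ throughout (so there are no fixed points of the swap to worry about). I would also note that the statement is written with $|\mult{D(V(G))}{i}|$, but since $\mult{X}{i}$ is already a nonnegative integer the absolute value bars are redundant; I would simply treat $\mult{D(V(G))}{i}$ as that integer count.
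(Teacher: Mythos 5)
Your proposal is correct and is essentially identical to the paper's proof: both sum $|S(u,i)|$ over all vertices, observe that this counts each pair at distance $i$ twice (you phrase it via ordered pairs, the paper directly), and then use $|S(u,i)|=\ell$ from the regularity hypothesis. Your side remark that the absolute value bars around $\mult{D(V(G))}{i}$ are redundant is also accurate.
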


\begin{proof}
For any graph $G$ we have  $\sum_{u\in V(G)}|S(u,i)|=2|\mult{D(V(G))}{i}|$ because on the left-hand-side, every pair of vertices that are at distance $i$ from one another is counted exactly twice. Since $G$ is $\ell$-regular at distance $i$ we have $|S(u,i)|=\ell$ for all $u\in V(G)$ and the lemma follows.
\end{proof}

\begin{lemma}\label{lemma_A_mixed_pairs}
Suppose a graph $G$ is $\ell$-regular at distance $i$ where $i\in\{1,\ldots,\diam(G)\}$. For any $A\subseteq V(G)$, we have
\[\mult{D(A,\bar{A})}{i}=\ell|A|-2|\mult{D(A)}{i}|.\]

\end{lemma}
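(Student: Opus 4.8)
The plan is to reprove this by the same double-counting argument used in Lemma \ref{lemma_circle}, but summing sphere sizes over $A$ rather than over all of $V(G)$. First I would form the sum $\sum_{u\in A}|S(u,i)|$. Since $G$ is $\ell$-regular at distance $i$, every term equals $\ell$, so this sum equals $\ell|A|$. The point of the proof is to re-express this same sum as a count of ordered pairs: $\sum_{u\in A}|S(u,i)|$ counts exactly the ordered pairs $(u,v)$ with $u\in A$, $v\in V(G)$, and $d(u,v)=i$.

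Next I would partition these ordered pairs according to whether the second coordinate $v$ lies in $A$ or in $\bar A$. For pairs with $v\in A$, both endpoints lie in $A$ and are at distance $i$, so they correspond to the pairs counted by $D(A)$; crucially, each unordered pair $\{u,v\}\subseteq A$ at distance $i$ arises twice here (once as $(u,v)$ and once as $(v,u)$, both with the first coordinate in $A$), so these contribute $2\,\mult{D(A)}{i}$. For pairs with $v\in\bar A$, we have one endpoint in $A$ and one in $\bar A$ at distance $i$, which are precisely the mixed pairs counted by $D(A,\bar A)$; each such pair is counted exactly once, since the summation ranges only over first coordinates $u\in A$, so these contribute $\mult{D(A,\bar A)}{i}$.

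Combining the two expressions for the same sum yields
\[\ell|A|=2\,\mult{D(A)}{i}+\mult{D(A,\bar A)}{i},\]
and solving for $\mult{D(A,\bar A)}{i}$ gives the claimed identity.

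The step to watch is the asymmetry in the factor of two: within-$A$ pairs are double-counted while mixed pairs are counted once. This is the only subtlety, and it arises precisely because the outer sum runs over $u\in A$ rather than over all of $V(G)$ (unlike Lemma \ref{lemma_circle}, where summing over the whole vertex set makes every distance-$i$ pair appear twice). Verifying this bookkeeping carefully is what produces the coefficient $2$ on the $D(A)$ term and its absence on the mixed term; the rest is routine.
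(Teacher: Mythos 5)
Your proof is correct and is essentially the paper's argument: the paper also double-counts $\sum_{u\in A}|S(u,i)|=\ell|A|$ and splits the distance-$i$ pairs meeting $A$ into within-$A$ pairs (counted twice) and mixed pairs (counted once), merely packaging the bookkeeping via a linear order and the symmetric difference $\mathcal{L}\triangle\mathcal{G}$ rather than your ordered-pair partition. Your version is, if anything, a slightly more direct phrasing of the same count.
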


\begin{proof}
Let $<$ be a linear order on the vertices of $G$. Let $\mathcal{L}$ be the set of all pairs of vertices in $G$ that are at distance $i$ from each other whose $<$-least element is in $A$, and let $\mathcal{G}$ be the set of all pairs of vertices in $G$ that are at distance $i$ from each other whose $<$-greatest element is in $A$. Then symmetric difference $\mathcal{L}\triangle\mathcal{G}=(\mathcal{L}\setminus\mathcal{G})\cup(\mathcal{G}\setminus\mathcal{L})$ is the set of all pairs of vertices in $G$, one of whose elements is in $A$ and one of whose elements is in $\bar{A}$, that are at distance $i$ from each other. We have
\begin{align*}
\mult{D(A,\bar{A})}{i}&=|\mathcal{L}\triangle\mathcal{G}|\\
    &=|\mathcal{L}|+|\mathcal{G}|-2|\mathcal{L}\cap\mathcal{G}|\\
    &=\sum_{u\in A}|S(u,d)| - 2|\P_2(A,d)|\\
    &=\ell |A| - 2|\P_2(A,d)|.
\end{align*}
\end{proof}



We are ready to provide a characterization of distance degree regular graphs in terms of complements of minimizers being minimizers. In the following theorem, the intended meanings of items (3) and (4) is that one can phrase these conditions in terms of minimizers, local minimizers, maximizers or local maximizers, and in each case one obtains a condition that is equivalent to $G$ being distance degree regular.

\begin{theorem}\label{theorem_complements}
Suppose $G$ is a finite connected graph with diameter $D$. Then the following are equivalent.
\begin{enumerate}
    \item\label{item_comp_ddr} $G$ is distance degree regular.
    \item\label{item_comp_const} For every function $g:\{1,\ldots,D\}\to\mathbb{R}$ and every $A\subseteq V(G)$ with $|A|>0$ we have
\begin{align}E_g(A)-E_g(V(G)\setminus A)=\left(\frac{2|A|}{|G|}-1\right) E_g(V(G)).\label{equation_complements}
\end{align}
    \item \label{item_comp_comp} For every function $g:\{1,\ldots,D\}\to\R$ the complement of every (local) minimizer (maximizer) of $E_g$ is a (local) minimizer (maximizer) of $E_g$.
    \item \label{item_comp_lin_indep} For some function $g:\{1,\ldots,D\}\to\R$ such that the set of real numbers $\{g(1),\ldots,g(D)\}$ is linearly independent over $\Q$, the complement of every (local) minimizer (maximizer) of $E_g$ is a (local) minimizer (maximizer) of $E_g$.
\end{enumerate}
\end{theorem}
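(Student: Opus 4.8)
The plan is to establish the cycle of implications \eqref{item_comp_ddr}$\Rightarrow$\eqref{item_comp_const}$\Rightarrow$\eqref{item_comp_comp}$\Rightarrow$\eqref{item_comp_lin_indep}$\Rightarrow$\eqref{item_comp_ddr}, treating the four ``versions'' of \eqref{item_comp_comp} and \eqref{item_comp_lin_indep} (minimizer, local minimizer, maximizer, local maximizer) uniformly. For \eqref{item_comp_ddr}$\Rightarrow$\eqref{item_comp_const}, suppose $G$ is $\ell_i$-regular at distance $i$ for each $i\in\{1,\ldots,D\}$. The starting point is the decomposition of $D(V(G))$ by where the two endpoints of a pair lie: for each $i$ one has $\mult{D(V(G))}{i}=\mult{D(A)}{i}+\mult{D(\bar A)}{i}+\mult{D(A,\bar A)}{i}$, since every pair at distance $i$ lies inside $A$, inside $\bar A$, or crosses. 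Using Lemma~\ref{lemma_A_mixed_pairs} to eliminate $\mult{D(A,\bar A)}{i}$ collapses this to $\mult{D(A)}{i}-\mult{D(\bar A)}{i}=\ell_i|A|-\mult{D(V(G))}{i}$, and then Lemma~\ref{lemma_circle}, which gives $\ell_i=2\mult{D(V(G))}{i}/|G|$, turns the right-hand side into $(2|A|/|G|-1)\mult{D(V(G))}{i}$. Multiplying by $g(i)$, summing over $i$, and using $E_g(X)=\sum_i g(i)\mult{D(X)}{i}$ yields exactly \eqref{equation_complements}.

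For \eqref{item_comp_const}$\Rightarrow$\eqref{item_comp_comp}, the content of \eqref{equation_complements} is that $E_g(X)-E_g(\bar X)$ depends on $X$ only through $|X|$; hence for any two equinumerous sets $A,B$ we get $E_g(\bar B)-E_g(\bar A)=E_g(B)-E_g(A)$. The complementation map $X\mapsto\bar X$ is a bijection from the size-$|A|$ sets onto the size-$|\bar A|$ sets, and since $\overline{(A\setminus\{u\})\cup\{v\}}=(\bar A\setminus\{v\})\cup\{u\}$ it carries $\pert_G(A)$ onto $\pert_G(\bar A)$. Thus the sign of $E_g(B)-E_g(A)$ is preserved under complementation over precisely the correct comparison class in each of the four cases, so $A$ being extremal (globally or locally, a min or a max) forces $\bar A$ to be extremal of the same type. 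The implication \eqref{item_comp_comp}$\Rightarrow$\eqref{item_comp_lin_indep} is then immediate: admissible functions exist (e.g.\ $g(i)=\sqrt{p_i}$ for distinct primes $p_i$ makes $\{g(1),\ldots,g(D)\}$ linearly independent over $\Q$), and \eqref{item_comp_comp} applies to such a $g$ in particular.

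The main obstacle is \eqref{item_comp_lin_indep}$\Rightarrow$\eqref{item_comp_ddr}, and the key device is to feed singletons into the complementation hypothesis. Every singleton $\{u\}$ has $E_g(\{u\})=0$, and every perturbation of it is again a singleton, so $\{u\}$ is simultaneously a global and local minimizer and maximizer of $E_g$; whichever version of \eqref{item_comp_lin_indep} we are handed therefore applies to it and forces $V(G)\setminus\{u\}$ to be extremal of the corresponding type. Set $f(u)=\sum_{i=1}^{D}g(i)|S(u,i)|$, so that $E_g(V(G)\setminus\{u\})=E_g(V(G))-f(u)$. In the two global versions, all the sets $V(G)\setminus\{u\}$ (which are exactly the size-$(|G|-1)$ sets) share the common extremal value, so $f$ is constant on $V(G)$ outright. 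In the two local versions, the perturbations of $V(G)\setminus\{u\}$ are precisely the sets $V(G)\setminus\{w\}$ with $w\sim u$, so extremality gives only $f(u)=f(w)$ for adjacent $u,w$; but connectivity of $G$ upgrades this to $f$ constant.

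Finally, linear independence finishes the argument. For any two vertices $u,v$ we have $\sum_{i=1}^{D}g(i)\bigl(|S(u,i)|-|S(v,i)|\bigr)=f(u)-f(v)=0$ with integer (hence rational) coefficients, so the $\Q$-linear independence of $\{g(1),\ldots,g(D)\}$ forces $|S(u,i)|=|S(v,i)|$ for every $i$. Thus $G$ is regular at every distance, i.e.\ distance degree regular, completing the cycle; note that this last paragraph handles all four versions at once, since each only entered through making $f$ constant.
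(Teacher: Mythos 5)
Your proposal is correct, and its skeleton coincides with the paper's proof: the same two counting lemmas (Lemma~\ref{lemma_circle} and Lemma~\ref{lemma_A_mixed_pairs}) produce $\mult{D(A)}{i}-\mult{D(\bar A)}{i}=\left(\frac{2|A|}{|G|}-1\right)\mult{D(V(G))}{i}$ for (\ref{item_comp_ddr})$\Rightarrow$(\ref{item_comp_const}); the same observations that the right side of \eqref{equation_complements} depends only on $|A|$ and that complementation carries $\pert_G(A)$ onto $\pert_G(\bar A)$ give (\ref{item_comp_const})$\Rightarrow$(\ref{item_comp_comp}); and the same singleton trick (every $\{u\}$ is trivially extremal of all four types, so $V(G)\setminus\{u\}$ is forced to be extremal) drives (\ref{item_comp_lin_indep})$\Rightarrow$(\ref{item_comp_ddr}), with $\Q$-linear independence pinning down the integer distance counts exactly as in the paper. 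The one substantive difference is in your favor: the paper's written proof of (\ref{item_comp_lin_indep})$\Rightarrow$(\ref{item_comp_ddr}) only carries out the global-minimizer reading, where all sets $V(G)\setminus\{v\}$ share one value $K$ outright, and leaves the local readings to the reader; you handle them explicitly by identifying the perturbations of $V(G)\setminus\{u\}$ as precisely the sets $V(G)\setminus\{w\}$ with $w\sim u$ and then using connectivity of $G$ to propagate $f(u)=f(w)$ along edges to a global constant. One small point worth making explicit there: local extremality at $u$ alone yields only the one-sided inequality (e.g.\ $f(w)\leq f(u)$ for all $w\sim u$ in the local-minimizer case); the equality $f(u)=f(w)$ comes from applying the hypothesis at \emph{both} endpoints of each edge, which your argument uses implicitly since every singleton is a local extremizer. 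With that reading, all four versions go through, and your treatment is actually slightly more complete than the paper's on this step.
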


\begin{proof}
Suppose $G$ is a finite connected distance degree regular graph. If $A$ is a set of vertices in $G$, we let $\bar{A}=V(G)\setminus A$.

Let us prove (\ref{item_comp_ddr}) $\implies$ (\ref{item_comp_const}). Suppose $A$ is a set of vertices in $G$ with $|A|=m$. For $i\in\{1,\ldots,D\}$, let $m_i=\mult{D(A)}{i}$, $n_i=\mult{D(\bar{A})}{i}$, $M_i=\mult{D(V(G))}{i}$ and suppose $G$ is $\ell_i$-regular at distance $i$. By Lemma \ref{lemma_A_mixed_pairs} we have
\[M_i = m_i+n_i+\ell_i m-2m_i,\]
and after applying Lemma \ref{lemma_circle}, 
\begin{align*}
M_i&=m_i+n_i+\left(\frac{2M_i}{|G|}\right)m-2m_i\\
m_i-n_i&=\left(\frac{2m}{|G|}-1\right)M_i.
\end{align*}
We have
\begin{align*}
E_g(A)- E_g(\bar{A})&=\sum_{i=1}^D m_i g(i)-\sum_{i=1}^D n_i g(i)\\
    &=\sum_{i=1}^D(m_i-n_i)g(i)\\
    &=\sum_{i=1}^D\left(\frac{2m}{|G|}-1\right)M_i g(i)\\
    &=\left(\frac{2m}{|G|}-1\right)\sum_{i=1}^D M_i g(i)\\
\end{align*}

We obtain (\ref{item_comp_comp}) for maximizers and minimizers as an easy consequence of (\ref{item_comp_const}) since the quantity on the right hand side of equation (\ref{equation_complements}) does not depend on which vertices are in $A$. Furthermore, (\ref{item_comp_comp}) for local maximizers and local minimizers is also easily obtained from (\ref{item_comp_const}) by additionally noting that, using terminology defined in Section \ref{section_intro}, the complement of a perturbation of a set of vertices is a perturbation of the complement of the set of vertices.


Clearly, (\ref{item_comp_comp}) $\implies$ (\ref{item_comp_lin_indep}), so it only remains to prove that (\ref{item_comp_lin_indep}) $\implies$ (\ref{item_comp_ddr}). Fix a $g$ such that the set of real numbers $\{g(1),\ldots,g(D)\}$ is linearly independent over $\Q$. For each $v\in V(G)$ we have $E_g(\{v\})=0$ and hence $\{v\}$ is a minimizer of $E_g$. Thus, by assumption for each $v\in V(G)$ the set $\overline{\{v\}}$ is a minimizer of $E_g$ as well. This implies that, there is a fixed real number $K$ with 
\begin{align}
E_g\left(\overline{\{v\}}\right)=\sum_{i=1}^D\mult{D\left(\overline{\{v\}}\right)}{i}g(i)=K\label{equation_lin_comb}
\end{align}
for all $v\in V(G)$. Since $\{g(1),\ldots,g(D)\}$ is also linearly independent over $\Z$, it follows that there is only one way to write $K$ as a $\Z$-linear combination of reals in $\{g(1),\ldots,g(D)\}$. Thus, for each $i\in\{1,\ldots,D\}$, there is a constant $c_i\in\Z$ such that $\mult{D\left(\overline{\{v\}}\right)}{i}=c_i$ for all $v\in V(G)$.
Now, since for all $A\subseteq V(G)$ and all $i\in\{1,\ldots,D\}$ we have that
\[\mult{D(A)}{i}+\mult{D(\bar{A})}{i}+\mult{D(A,\bar{A})}{i}=\mult{D(V(G)}{i},\]
we see that for $v\in V(G)$ and $i\in\{1,\ldots,D\}$, the quantity
\[\mult{D\left(\{v\},\overline{\{v\}}\right)}{i}=\mult{D(V(G))}{i}-c_i,\]
does not depend on $v$. Therefore, for all $u,v\in V(G)$ we have $\vec{D}(u)=\vec{D}(v)$, so $G$ is distance degree regular.
\end{proof}

\section{Maximizers of the Wiener index on paths and cycles}\label{section_wiener}

Let us establish a simple formula for the maximum value of the Wiener index of a set of vertices in a path $P_n$ with $n$ vertices.

\begin{lemma}\label{lemma_wiener_index_formula}
Suppose $A$ is a set of vertices in $P_n$ where  $2\leq |A|=m\leq n$. We enumerate $A$ in increasing order as $A=\{a_1,\ldots,a_m\}$. Then
\[W(A)=\sum_{i=1}^{m-1}i(m-i)(a_{i+1}-a_i).\]
\end{lemma}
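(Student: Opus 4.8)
The plan is to compute $W(A) = \sum_{\{a_i,a_j\}\in\binom{A}{2}} d(a_i,a_j)$ directly by exploiting the structure of the path. Since $P_n$ is a path with vertices labeled in increasing order, the distance between two vertices equals the absolute difference of their labels, so for $i<j$ we have $d(a_i,a_j) = a_j - a_i$. Rather than summing over pairs in the naive way, I would rewrite each difference $a_j - a_i$ as a telescoping sum of the consecutive gaps $a_{k+1}-a_k$, and then reorganize the double sum so that the coefficient of each gap $(a_{k+1}-a_k)$ is collected. This is the standard "count how many pairs straddle each edge" idea.

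**Key steps.**

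First I would write
\[
W(A) = \sum_{1\le i<j\le m}(a_j-a_i) = \sum_{1\le i<j\le m}\sum_{k=i}^{j-1}(a_{k+1}-a_k),
\]
using the telescoping identity $a_j - a_i = \sum_{k=i}^{j-1}(a_{k+1}-a_k)$. Next I would swap the order of summation so that the gap $(a_{k+1}-a_k)$ is the outer index. For a fixed $k$ with $1\le k\le m-1$, the gap $(a_{k+1}-a_k)$ appears in the term for the pair $(i,j)$ precisely when $i\le k$ and $j\ge k+1$, i.e. when $i\in\{1,\ldots,k\}$ and $j\in\{k+1,\ldots,m\}$. The number of such pairs is $k\cdot(m-k)$, giving
\[
W(A) = \sum_{k=1}^{m-1} k(m-k)(a_{k+1}-a_k),
\]
which is exactly the claimed formula after renaming $k$ to $i$.

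**Anticipated obstacle.**

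I expect no genuine obstacle here; this is a routine combinatorial reindexing argument. The only point requiring care is verifying the counting of pairs straddling each gap, namely that the coefficient of $(a_{k+1}-a_k)$ is exactly the product of the number of "left" indices $\{1,\ldots,k\}$ and the number of "right" indices $\{k+1,\ldots,m\}$. I would double-check this by confirming that each pair $(i,j)$ with $i<j$ contributes the full set of gaps strictly between positions $i$ and $j$, so that interchanging the summation order is valid and every gap is counted with the correct multiplicity. As a sanity check one could verify the formula on a small case such as $m=2$, where it reduces to $W(A) = 1\cdot 1\cdot(a_2-a_1) = a_2-a_1 = d(a_1,a_2)$, as required.
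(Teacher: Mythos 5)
Your proof is correct and takes essentially the same approach as the paper: both decompose $W(A)$ as a weighted sum of the consecutive gaps $a_{i+1}-a_i$ and show the coefficient of each gap is $i(m-i)$ by counting the pairs $(j,k)$ with $j\le i<i+1\le k$ that straddle it. Your telescoping-and-swap presentation is just a more explicit write-up of the paper's counting argument.
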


\begin{proof}
We have, 
\[W(A)=\sum_{i=1}^{m-1}n_i(a_{i+1}-a_i),\]
where $n_i$ is the number of intervals of the form $\{a_j,\ldots,a_k\}$ such that $j<k$ and $\{a_i,\ldots,a_{i+1}\}\subseteq \{a_j,\ldots,a_k\}$. But an interval of the form $\{a_j,\ldots,a_k\}$ will contain $\{a_i,\ldots,a_{i+1}\}$ if and only if $j\in\{1,\ldots,i\}$ and $k\in\{i+1,\ldots,m\}$. Hence $n_i=i(m-i)$.
\end{proof}

The following proposition provides a characterization of the maximizers and local maximizers of the Wiener index $W$ on $P_n$.

\begin{proposition}\label{proposition_wiener_path}
Suppose $A$ is a set of vertices in $P_n$ where  $2\leq |A|=m\leq n$. The following are equivalent.
\begin{enumerate}
\item\label{item_prop_paths_max} $A$ is a maximizer of $W$ on $P_n$.
\item\label{item_prop_paths_loc_max} $A$ is a local maximizer of $W$ on $P_n$.
\item\label{item_prop_paths_cond} If $m$ is even then
\[A=\left\{1,\ldots,\frac{m}{2}\right\}\cup\left\{n-\frac{m}{2}+1,\ldots,n\right\},\] 
and if $m$ is odd then 
\[A=\left\{1,\ldots,\frac{m-1}{2}\right\}\cup\{j\}\cup\left\{n-\frac{m-1}{2}+1,\ldots,n\right\}\] for some integer $j$ with $\frac{m-1}{2}<j<n-\frac{m-1}{2}+1$.
\end{enumerate}
\end{proposition}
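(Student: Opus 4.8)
The plan is to prove the cycle of implications (\ref{item_prop_paths_max}) $\Rightarrow$ (\ref{item_prop_paths_loc_max}) $\Rightarrow$ (\ref{item_prop_paths_cond}) $\Rightarrow$ (\ref{item_prop_paths_max}). The implication (\ref{item_prop_paths_max}) $\Rightarrow$ (\ref{item_prop_paths_loc_max}) is immediate, since a global maximizer is in particular a local maximizer. Throughout, I write $A=\{a_1,\ldots,a_m\}$ in increasing order, set $g_i=a_{i+1}-a_i\geq 1$ for the gaps, and abbreviate $w_i=i(m-i)$, so that Lemma \ref{lemma_wiener_index_formula} reads $W(A)=\sum_{i=1}^{m-1}w_ig_i$. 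The single computation driving everything is $w_i-w_{i-1}=m+1-2i$, which shows that the weights $w_1,\ldots,w_{m-1}$ strictly increase up to the middle and strictly decrease after it; the maximum weight is attained at the unique index $i=\frac{m}{2}$ when $m$ is even, and at the two consecutive indices $i=\frac{m-1}{2}$ and $i=\frac{m+1}{2}$ when $m$ is odd.

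For (\ref{item_prop_paths_cond}) $\Rightarrow$ (\ref{item_prop_paths_max}), I would argue by constrained optimization. Writing $g_i=1+h_i$ with $h_i\geq 0$, we have $W(A)=\sum_i w_i+\sum_i w_i h_i$, where $\sum_i h_i=(a_m-a_1)-(m-1)\leq n-m$. Since all $w_i>0$, the quantity $\sum_i w_i h_i$ is maximized precisely by using the full budget $\sum_i h_i=n-m$ and placing all of it on indices of maximal weight. A direct check shows that the sets described in (\ref{item_prop_paths_cond}) are exactly those with $a_1=1$, $a_m=n$, all gaps equal to $1$ except at the maximal-weight index (for $m$ even) or distributed across the two maximal-weight indices (for $m$ odd), and hence they attain this optimal value; thus they are maximizers.

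The substance of the argument is (\ref{item_prop_paths_loc_max}) $\Rightarrow$ (\ref{item_prop_paths_cond}). First I would show that a local maximizer satisfies $a_1=1$ and $a_m=n$: if $a_1>1$, then moving $a_1$ to $a_1-1$ enlarges $g_1$ and raises $W$ by $w_1>0$, contradicting local maximality, and symmetrically for $a_m$. Next, for an interior vertex $a_i$ (with $1<i<m$), a leftward move is available exactly when $g_{i-1}\geq 2$ and changes $W$ by $w_i-w_{i-1}$, while a rightward move is available exactly when $g_i\geq 2$ and changes $W$ by $w_{i-1}-w_i$. Local maximality therefore forces the implications $g_{i-1}\geq 2\Rightarrow i\geq\frac{m+1}{2}$ and $g_i\geq 2\Rightarrow i\leq\frac{m+1}{2}$. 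Reading these off of a gap $g_k$ (by moving its right endpoint $a_{k+1}$ left when $k\leq m-2$, and its left endpoint $a_k$ right when $k\geq 2$) shows that any interior gap with $g_k\geq 2$ must satisfy $\frac{m-1}{2}\leq k\leq\frac{m+1}{2}$, i.e.\ $k$ is a maximal-weight index.

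The main obstacle, and the place requiring the most care, is the treatment of the boundary gaps $g_1$ and $g_{m-1}$, where only one of the two perturbation moves is available because one endpoint is a fixed boundary vertex. Here I would check directly that $g_1\geq 2$ forces $m\leq 3$ (via the single available move on $a_2$) and symmetrically that $g_{m-1}\geq 2$ forces $m\leq 3$, so that for $m\geq 4$ both boundary gaps equal $1$ and the only possible large gaps sit at the maximal-weight index or pair of indices; the small cases $m=2,3$ are then verified by hand and seen to fit the stated forms. Combining $a_1=1$, $a_m=n$, and the fact that every non-unit gap lies at a maximal-weight position, one reconstructs exactly the sets in (\ref{item_prop_paths_cond}), which closes the cycle.
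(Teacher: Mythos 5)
Your proposal is correct, and for most of its length it follows the paper's own skeleton: the same cycle of implications (1) $\Rightarrow$ (2) $\Rightarrow$ (3) $\Rightarrow$ (1), and for (2) $\Rightarrow$ (3) the same one-step perturbation arithmetic — your $w_{i+1}-w_i=m-2i-1$ is exactly the paper's direct count $-i+(m-i-1)$ for moving $a_{i+1}$ one step left. Where you genuinely diverge is (3) $\Rightarrow$ (1). The paper argues indirectly: it takes an arbitrary maximizer $B$ with $|B|=m$, invokes the already-established implication (1) $\Rightarrow$ (3) to conclude $B$ has the stated form, and then (for $m$ even) uniqueness of that form gives $A=B$; for $m$ odd this additionally needs that $W$ is insensitive to the choice of the middle point $j$, which holds because $w_{(m-1)/2}=w_{(m+1)/2}$ (the paper compresses this into ``the case in which $m$ is odd is similar''). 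You instead prove the global bound directly: writing $g_i=1+h_i$ with budget $\sum_i h_i\leq n-m$, you get $W(A)\leq\sum_i w_i + w_{\max}(n-m)$, with equality exactly when the full budget sits on maximal-weight indices, and you check the sets in (3) attain it. Your route is self-contained (it does not lean on the existence of a maximizer plus the previously proven implication), produces the maximum value of $W$ in closed form, and its equality analysis re-derives (1) $\Rightarrow$ (3) for free, at the cost of slightly more bookkeeping; the paper's route buys brevity. One further point in your favor: your explicit normalization $a_1=1$, $a_m=n$ and your separate treatment of the boundary gaps $g_1,g_{m-1}$ (forcing $m\leq 3$, with $m=2,3$ checked by hand) tightens a spot where the paper is terse — its assertion that failure of (3) yields $d(a_i,a_{i+1})>1$ for some $i\in\{1,\ldots,m\}\setminus\{\frac{m}{2}\}$ tacitly requires a sentinel convention at the endpoints to cover, e.g., the case $a_1>1$, which your argument handles explicitly.
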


\begin{proof}
Throughout the proof we assume $A=\{a_1,\ldots,a_{m}\}$, where the elements of $A$ are enumerated in increasing order. We will prove that (\ref{item_prop_paths_max}) $\implies$ (\ref{item_prop_paths_loc_max}) $\implies$ (\ref{item_prop_paths_cond}) $\implies$ (\ref{item_prop_paths_max}). Clearly (\ref{item_prop_paths_max}) $\implies$ (\ref{item_prop_paths_loc_max}).

Let us prove (\ref{item_prop_paths_loc_max}) $\implies$ (\ref{item_prop_paths_cond}). Suppose (\ref{item_prop_paths_cond}) is false and $m$ is even. Then for some $i\in\{1,\ldots,m\}\setminus\{\frac{m}{2}\}$ we have $d(a_i,a_{i+1})>1$. Suppose $i<\frac{m}{2}$. Since $m$ is even we have $m\geq 2i+2$. Let $A'=(A\setminus\{a_{i+1}\})\cup\{a_{i+1}-1\}$ and notice that 
\[W(A')=W(A)-i+(m-i-1)=W(A)+m-2i-1\geq W(A)+1>W(A).\] 
Thus $A$ is not a local maximizer of $W$. The case in which $m$ is even and $i>\frac{m}{2}$ is similar. 

Now suppose (\ref{item_prop_paths_cond}) is false and $m$ is odd. Then for some $i\in\{1,\ldots,m-1\}\setminus\{\frac{m-1}{2},\frac{m+1}{2}\}$ we have $d(a_i,a_{i+1})>1$. Suppose $i<\frac{m-1}{2}$, then $m>2i+1$ and since $m$ is odd we have $m\geq 2i+3$. Let $A'=(A\setminus\{a_{i+1}\})\cup\{a_{i+1}-1\}$ and notice that
\[W(A')=W(A)-i+(m-i-1)=W(A)+m-2i-1\geq W(A)+2>W(A).\]
Thus $A$ is not a local maximizer of $W$. The case in which $m$ is odd and $i>\frac{m+1}{2}$ is similar. This establishes (\ref{item_prop_paths_loc_max}) $\implies$ (\ref{item_prop_paths_cond}).

Now we show that (\ref{item_prop_paths_cond}) $\implies$ (\ref{item_prop_paths_max}). Suppose $m$ is even. We must show that $A=\left\{1,\ldots,\frac{m}{2}\right\}\cup\left\{n-\frac{m}{2}+1,\ldots,n\right\}$ is a maximizer of $W$. Let $B\subseteq V(P_n)$ with $|B|=m$ be a maximizer of $W$, since we already proved that (\ref{item_prop_paths_max}) $\implies$ (\ref{item_prop_paths_cond}), it follows that $B=\left\{1,\ldots,\frac{m}{2}\right\}\cup\left\{n-\frac{m}{2}+1,\ldots,n\right\}$. But then $A=B$ and hence $A$ is a maximizer of $W$. The case in which $m$ is odd is similar.
\end{proof}

\begin{corollary}\label{corollary_ascent_on_paths}
Suppose $A$ is a set of vertices in $P_n$ where $2\leq |A|=\leq n$. Then the set $\ascent(P_n,W,A)$ is a maximizer of $W$.
\end{corollary}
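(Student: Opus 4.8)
The plan is to observe that the corollary follows almost immediately from Proposition \ref{proposition_wiener_path}, once we check the three routine facts about the algorithm: that $\ascent$ preserves cardinality, that it terminates, and that its output is a local maximizer of $W$. First I would note that, by definition as the ascending analogue of $\descent$, the algorithm $\ascent(P_n,W,A)$ repeatedly replaces the current set $X$ by one of its perturbations whenever some perturbation has strictly larger $W$-value, and halts precisely when no perturbation strictly increases $W$. Since every perturbation deletes one vertex and inserts another, each set $X$ arising during the run has cardinality $m=|A|$; in particular the returned set has cardinality $m$.

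Next I would establish termination. There are only finitely many subsets of $V(P_n)$ of cardinality $m$, so $W$ takes only finitely many values on such sets. Along the run, each step strictly increases $W(X)$, so the sequence of $W$-values is strictly increasing and hence no set can recur; therefore the algorithm halts after finitely many steps. When it halts at the terminal step, the returned set $X$ satisfies $W(X)\geq W(B)$ for every $B\in\pert_{P_n}(X)$, which is exactly the defining condition that $X$ is a local maximizer of $W$ on $P_n$.

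Finally, I would invoke Proposition \ref{proposition_wiener_path}: the implication (\ref{item_prop_paths_loc_max}) $\implies$ (\ref{item_prop_paths_max}) asserts that on $P_n$ every local maximizer of $W$ is in fact a global maximizer. Applying this to $X=\ascent(P_n,W,A)$ yields that $\ascent(P_n,W,A)$ is a maximizer of $W$, completing the proof.

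I do not anticipate a genuine obstacle here, since the mathematical content is entirely carried by Proposition \ref{proposition_wiener_path}; the hardest point is merely the bookkeeping that $\ascent$ behaves as expected, and this reduces to the finiteness of the search space together with the strict monotonicity of $W$ along the run. The only subtlety worth stating explicitly is that "maximizer" is understood among sets of the same cardinality, which is why cardinality preservation under perturbation must be recorded before the appeal to the proposition.
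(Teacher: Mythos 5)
Your proposal is correct and is essentially the paper's argument: the paper treats this corollary as an immediate consequence of Proposition \ref{proposition_wiener_path}, relying on exactly the properties of \ascent{} you verify (cardinality preservation under perturbation, termination via strict increase of $W$ over a finite search space, and the halting condition yielding a local maximizer). Your explicit bookkeeping of these routine facts, which the paper leaves implicit in its description of the local search algorithms, is accurate and complete.
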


Let us characterize the maximizers and local maximizers of $W$ on $C_n$ in terms of the chromatic and diatonic lengths studied by Clough and Douthett (see the beginning of Section \ref{section_me} for definitions). Let us note, that using the following theorem it is straight forward to classify all Wiener index maximizers on any given $n$-cycle; for example, up to rotations and reflections, all Wiener index maximizers on $C_7$ are depicted in Figure \ref{figure_WI_max_c7} and all Wiener index maximizers on $C_8$ are depicted in Figure \ref{figure_WI_max_c8}.

\begin{theorem}\label{theorem_w}
Suppose $A$ is a set of vertices in $C_n$ where  $2\leq|A|=m\leq n$. The following are equivalent.
\begin{enumerate}
\item\label{theorem_w_max} $A$ is a maximizer of $W$ on $C_n$.
\item\label{theorem_w_local} $A$ is a local maximizer of $W$ on $C_n$.
\item\label{theorem_w_top_spectra} If $m$ is odd then 
\[\supp(\spec_{\floor{\frac{m}{2}}}(A))\subseteq\left\{1,\ldots,\floor{\frac{n}{2}}\right\}\] and if $m$ is even then \[\supp(\spec_{\frac{m}{2}}(A))\subseteq\left\{\floor{\frac{n}{2}},\ceil{\frac{n}{2}}\right\}.\]
\item\label{theorem_w_all_spec} For all integers $k$ with $1\leq k< \frac{m}{2}$ we have $\sigma_k(A)=\sigma^*_k(A)$ and, in case $m$ is even, we have \[\supp(\sigma_{\frac{m}{2}}(A))=\left\{\floor{\frac{n}{2}}\right\}.\]
\end{enumerate}
\end{theorem}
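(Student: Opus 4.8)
The plan is to prove the cycle of implications (\ref{theorem_w_max})$\Rightarrow$(\ref{theorem_w_local})$\Rightarrow$(\ref{theorem_w_top_spectra})$\Rightarrow$(\ref{theorem_w_all_spec})$\Rightarrow$(\ref{theorem_w_max}); the implication (\ref{theorem_w_max})$\Rightarrow$(\ref{theorem_w_local}) is immediate. Write $A=\{a_0,\dots,a_{m-1}\}$ in clockwise order and set $c_i^{(k)}=d^*(a_i,a_{i+k})$ (indices mod $m$), so that $\sigma_k^*(A)=[\,c_i^{(k)}:0\le i<m\,]$ and $\sigma_k(A)=[\,\min(c_i^{(k)},n-c_i^{(k)}):0\le i<m\,]$. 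Two elementary facts drive the argument: since consecutive clockwise gaps sum to $n$, one has $\sum\sigma_k^*(A)=kn$ for every $k$; and since the reverse of a span-$k$ pair is a span-$(m-k)$ pair with the same geodesic distance, $\sigma_k(A)=\sigma_{m-k}(A)$ as multisets. The latter lets me decompose $W(A)=\tfrac12\sum_{k=1}^{m-1}\sum\sigma_k(A)$ as $\sum_{k=1}^{(m-1)/2}\sum\sigma_k(A)$ when $m$ is odd and as $\sum_{k=1}^{m/2-1}\sum\sigma_k(A)+\tfrac12\sum\sigma_{m/2}(A)$ when $m$ is even.

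For (\ref{theorem_w_all_spec})$\Rightarrow$(\ref{theorem_w_max}) I would bound each summand separately. For $k<\tfrac m2$, $\sum\sigma_k(A)=\sum_i\min(c_i^{(k)},n-c_i^{(k)})\le\sum_i c_i^{(k)}=kn$, with equality exactly when every $c_i^{(k)}\le\floor{n/2}$, i.e.\ when $\sigma_k(A)=\sigma_k^*(A)$; for even $m$ and $k=\tfrac m2$, each term is at most $\floor{n/2}$, so $\sum\sigma_{m/2}(A)\le m\floor{n/2}$, with equality exactly when $\supp(\sigma_{m/2}(A))=\{\floor{n/2}\}$. Since these bounds hold for every $B$ with $|B|=m$, condition (\ref{theorem_w_all_spec}) forces $W(A)$ to equal this common upper bound, so $A$ maximizes $W$. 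The equivalence (\ref{theorem_w_top_spectra})$\Leftrightarrow$(\ref{theorem_w_all_spec}) is purely combinatorial: because $c_i^{(k)}$ is strictly increasing in $k$ for fixed $i$, controlling the top span controls all shorter ones, so if every top-span clockwise distance is $\le\floor{n/2}$ (odd $m$), resp.\ $\le\ceil{n/2}$ (even $m$), then $c_i^{(k)}\le\floor{n/2}$ for all $k<\tfrac m2$, giving $\sigma_k=\sigma_k^*$; and $\min(c,n-c)=\floor{n/2}$ holds exactly for $c\in\{\floor{n/2},\ceil{n/2}\}$, which matches the two formulations of the top-span condition.

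The crux is (\ref{theorem_w_local})$\Rightarrow$(\ref{theorem_w_top_spectra}), which I would prove by contraposition: assuming (\ref{theorem_w_top_spectra}) fails I produce a single perturbation that strictly increases $W$. A one-step move of a vertex $a_p$ changes $W$ by $\sum_{l}\bigl(f(c_p^{(l)}\pm1)-f(c_p^{(l)})\bigr)$, where $f(c)=\min(c,n-c)$ and $f(c\pm1)-f(c)\in\{-1,0,+1\}$ according to whether $c$ lies below, at, or above $n/2$. If (\ref{theorem_w_top_spectra}) fails there is a ``long arc'': for odd $m$ some $c_j^{((m-1)/2)}\ge\floor{n/2}+1$, and for even $m$ some $c_j^{(m/2)}\ge\ceil{n/2}+1$ (its complementary arc, via $c_i^{(m/2)}+c_{i+m/2}^{(m/2)}=n$, being correspondingly short). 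Moving the far endpoint of such an arc one step counterclockwise rebalances the two arcs, and a term-by-term count shows the change is at least $+1$ in every parity case, the lone boundary term (occurring for odd $n$ when the arc has length exactly $\ceil{n/2}$) contributing $0$ rather than spoiling the estimate. The remaining technical point, and the main obstacle, is to guarantee that some such move is \emph{admissible}, i.e.\ that a long arc exists whose last gap has size at least $2$. I would establish this by a closure argument: if every long arc had last gap $1$, then the set of starting indices of long arcs would be closed under the shift $j\mapsto j-1$, hence equal to all of $\Z/m\Z$, forcing every gap to equal $1$ and $m=n$, contradicting the existence of a violation. The even case is entirely parallel.
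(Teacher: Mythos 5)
Your proposal is correct, and its skeleton is the same as the paper's: the cycle (\ref{theorem_w_max})$\Rightarrow$(\ref{theorem_w_local})$\Rightarrow$(\ref{theorem_w_top_spectra})$\Rightarrow$(\ref{theorem_w_all_spec})$\Rightarrow$(\ref{theorem_w_max}), with the crux (\ref{theorem_w_local})$\Rightarrow$(\ref{theorem_w_top_spectra}) handled by a one-step perturbation that shrinks a long arc, and your gain/loss count (at least $t+1$ gains versus at most $t-1$ losses when $n$ is even, at least $t$ versus $t-1$ when $n$ is odd, with the single boundary term contributing $0$) matches the paper's exactly. Two details genuinely differ. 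First, for admissibility of the move, the paper shifts the violating span-$t$ pair \emph{forward} to the first index $k$ with $d^*(a_{i+k},a_{i+k+1})>1$ (this is where its standing assumption $m\leq n-2$, arranged by a WLOG, is used) and moves the near endpoint clockwise; your mirror-image move of the far endpoint counterclockwise, with admissibility secured by the closure argument (long arcs whose last gap is $1$ propagate backward under $j\mapsto j-1$, so otherwise every gap equals $1$, forcing $m=n$ and contradicting the existence of a violation), is an equivalent mechanism that in fact dispenses with the WLOG; your explicit remark that for even $m$ the complementarity $c^{(m/2)}_i+c^{(m/2)}_{i+m/2}=n$ converts any violation into a long arc fills in a point the paper buries under ``the case in which $m$ is even is similar.'' Second, and more substantively, for (\ref{theorem_w_all_spec})$\Rightarrow$(\ref{theorem_w_max}) the paper argues indirectly: it asserts $W(A)=W(J^0_{n,m})$, takes an arbitrary maximizer $B$, and applies the already-established chain to conclude $W(B)=W(J^0_{n,m})=W(A)$. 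You instead derive the universal bound $W(B)\leq \sum_{k<m/2}kn$ (plus $\frac{m}{2}\floor{\frac{n}{2}}$ when $m$ is even) from the decomposition $W=\frac{1}{2}\sum_{k=1}^{m-1}\sum\sigma_k$, the symmetry $\sigma_k=\sigma_{m-k}$, the identity $\sum\sigma^*_k=kn$, and the termwise inequality $\min(c,n-c)\leq c$, with equality holding exactly under condition (\ref{theorem_w_all_spec}). This direct optimization argument is self-contained, makes explicit the computation the paper leaves implicit behind ``Then $W(A)=W(J^0_{n,m})$,'' and has the added benefit of identifying the maximum value of $W$ in closed form.
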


\begin{proof}
We will show that (\ref{theorem_w_max}) $\implies$ (\ref{theorem_w_local}) $\implies$ (\ref{theorem_w_top_spectra}) $\implies$ (\ref{theorem_w_all_spec}) $\iff$ (\ref{theorem_w_max}). Notice that (\ref{theorem_w_max}) $\implies$ (\ref{theorem_w_local}) is trivial. Without loss of generality, let us assume that $3\leq |A|=m\leq n-2$.

Throughout the proof, we assume $A=\{a_0,\ldots,a_{m-1}\}$ where the elements of $A$ are enumerated in increasing order. Additionally, we also assume that indices of elements of $A$ are always reduced modulo $m$.

Let us show that (\ref{theorem_w_local}) $\implies$ (\ref{theorem_w_top_spectra}). Suppose (\ref{theorem_w_top_spectra}) is false. Suppose $m$ is odd and 
\[\supp(\spec_t(A))\not\subseteq\left\{1,\ldots,\floor{\frac{n}{2}}\right\}\] where $t=\floor{\frac{m}{2}}$. Then, there is some pair $(a_i,a_j)\in A\times A$ such that $\spn_A(a_i,a_j)=t$ and $d^*_{C_n}(a_i,a_j)>\diam(C_n)=\floor{\frac{n}{2}}$. Since $|A|=m\leq n-2$, we may let $k$ be the least nonnegative integer $\ell$ such that 
\[d^*_{C_n}(a_{i+\ell},a_{i+\ell+1})>1.\] As illustrated in Figure \ref{figure_a_pair_with_large_distance}, it follows that 
\begin{align*}
\spn_A(a_{i+k},a_{i+k+t})&=t,\\
d^*_{C_n}(a_{i+k},a_{i+k+t})&>\floor{\frac{n}{2}}
\end{align*}
and 
\[d^*_{C_n}(a_{i+k},a_{i+k+1})>1.\]
\begin{figure}
\begin{tikzpicture}[scale=0.7]
  \draw (0,0) circle (1.5);

  \coordinate (ai) at ({90}:1.5);
  \node[draw, label={[label distance=3pt]90:$a_i$}, circle, fill=black, inner sep=2pt] at (ai) {};

  \coordinate (aip1) at ({70}:1.5);
  \node[draw, circle, fill=black, inner sep=2pt] at (aip1) {};

  \draw [loosely dotted, very thick, domain=45:78] plot ({2*cos(\x)}, {2*sin(\x)});

  \coordinate (aip2) at ({50}:1.5);
  \node[draw, circle, fill=black, inner sep=2pt] at (aip2) {};

 \coordinate (aipk) at ({30}:1.5);
  \node[draw, label={[label distance=1pt]30:$a_{i+k}$}, circle, fill=black, inner sep=2pt] at (aipk) {};

\draw[thick] (10:1.4) -- (10:1.6);

 \coordinate (aipkpp1) at ({10}:1.5);
  \node[label={[label distance=3pt]0:$a_{i+k}+1$}] at (aipkpp1) {};

 \coordinate (aipkp1) at ({-10}:1.5);
  \node[draw, label={[label distance=1pt]-10:$a_{i+k+1}$}, circle, fill=black, inner sep=2pt] at (aipkp1) {};

  \coordinate (aj) at ({250}:1.5);
  \node[draw, label={[label distance=5pt]273:$a_j=a_{i+t}$}, circle, fill=black, inner sep=2pt] at (aj) {};

  \coordinate (ajp1) at ({230}:1.5);
  \node[draw, circle, fill=black, inner sep=2pt] at (ajp1) {};

\draw[thick] (210:1.4) -- (210:1.6);

  \coordinate (ajp2) at ({190}:1.5);
  \node[draw, circle, fill=black, inner sep=2pt] at (ajp2) {};

    \coordinate (ajp3) at ({170}:1.5);
  \node[draw, label={[label distance=5pt]170:$a_{i+k+t}$}, circle, fill=black, inner sep=2pt] at (ajp3) {};

   \draw [loosely dotted, very thick, domain=180:240] plot ({2*cos(\x)}, {2*sin(\x)});

  \begin{pgfonlayer}{background}

  \end{pgfonlayer}

\end{tikzpicture}
\caption{\small \small A pair $(a_i,a_j)$ with $\spn_A(a_i,a_j)=t=\floor{\frac{m}{2}}$ and $d^*(a_i,a_j)>\floor{\frac{n}{2}}$.}\label{figure_a_pair_with_large_distance}
\end{figure}Thus $a_{i+k}+1\notin A$. Let $A'=(A\setminus\{a_{i+k}\})\cup\{a_{i+k}+1\}$. Suppose $n$ is even. Since $\spn_A(a_{i+k},a_{i+k+t})=t$, the multiset $D(A')$ can be obtained from $D(A)$ by subtracting $1$ from at most $t-1$ members of $D(A)$ and adding $1$ to at least $t+1$ members of $D(A)$. Hence
\[W(A')-W(A)\geq t+1-(t-1)=2,\]
and thus $A$ is not a local maximizer of $W$ on $C_n$. Suppose $n$ is odd. Since the multiset $D(A')$ can be obtained from $D(A)$ by subtracting $1$ from at most $t-1$ members of $D(A)$ and adding $1$ to at least $t$ members of $D(A)$, we have
\[W(A')-W(A)\geq t-(t-1)=1,\]
and thus $A$ is not a local maximizer of $W$ on $C_n$.

The case in which $m$ is even is similar.

Let us show that (\ref{theorem_w_top_spectra}) $\implies$ (\ref{theorem_w_all_spec}). Let $t=\floor{\frac{m}{2}}$ and suppose that for all $d\in\sigma_t^*(A)$ we have $d\leq\floor{\frac{n}{2}}$ when $m$ is odd and $d\in\left\{\floor{\frac{n}{2}},\ceil{\frac{n}{2}}\right\}$ when $m$ is even. We must show that for all integers $k$ with $1\leq k< \frac{m}{2}$ we have $\sigma_k(A)=\sigma^*_k(A)$ and, in case $m$ is even, we have $\supp(\sigma_{\frac{m}{2}}(A))=\left\{\floor{\frac{n}{2}}\right\}$. 

Fix an integer $k$ with $1\leq k<\frac{m}{2}$. Suppose $(a_i,a_j)$ is an ordered pair of elements of $A$ with $\spn_A(a_i,a_j)=k$, that is, $k$ is the least positive integer such that $k=j-1\mod n$. Then, assuming indicies have been reduced modulo $m$, 
\[\spn_A(a_i,a_{j+t-k})=t,\]
and hence
\[\d(a_i,a_j)\leq\d(a_i,a_{j+t-k}).\]
If $m$ is odd then $\d(a_i,a_{j+t-k})\leq\floor{\frac{n}{2}}$ and thus $d(a_i,a_j)=\d(a_i,a_j)$. If $m$ is even then 
\[\d(a_i,a_{j+t-k})\in\left\{\floor{\frac{n}{2}},\ceil{\frac{n}{2}}\right\},\]
and since $k<t=\frac{m}{2}$, we see that $\d(a_i,a_j)<\d(a_i,a_{j+t-k})$ and hence $d(a_i,a_j)=d^*(a_i,a_j)$. This implies $\sigma_k(A)=\sigma_k^*(A)$.

Now, suppose $m$ is even. Then, by assumption $t=\frac{m}{2}$ and for all $d\in\sigma^*_{\frac{m}{2}}(A)$ we have $d\in\left\{\floor{\frac{n}{2}},\ceil{\frac{n}{2}}\right\}$, which clearly implies $\supp(\sigma_{\frac{m}{2}}(A))=\left\{\floor{\frac{n}{2}}\right\}$.

\begin{figure}
\begin{subfigure}[b]{0.08\textwidth}
    \centering
    \begin{tikzpicture}[thick, scale=0.4]
    \tikzstyle{vertex}=[circle, draw, fill=white, inner sep=0mm, minimum size=1.5mm, line width=0.5pt]
    \tikzstyle{edge}=[thin]
    \node[vertex, fill=white] (0) at (90:1) {};\node[vertex, fill=white] (1) at (270/7:1) {};\node[vertex, fill=white] (2) at (-90/7:1) {};\node[vertex, fill=white] (3) at (-450/7:1) {};\node[vertex, fill=white] (4) at (-810/7:1) {};\node[vertex, fill=white] (5) at (-1170/7:1) {};\node[vertex, fill=white] (6) at (-1530/7:1) {};\draw[edge] (0) -- (1);\draw[edge] (1) -- (2);\draw[edge] (2) -- (3);\draw[edge] (3) -- (4);\draw[edge] (4) -- (5);\draw[edge] (5) -- (6);\draw[edge] (6) -- (0);
    \end{tikzpicture}
\end{subfigure}
\begin{subfigure}[b]{0.08\textwidth}
    \centering
    \begin{tikzpicture}[thick, scale=0.4]
    \tikzstyle{vertex}=[circle, draw, fill=white, inner sep=0mm, minimum size=1.5mm, line width=0.5pt]
    \tikzstyle{edge}=[thin]
    \node[vertex, fill=black] (0) at (90:1) {};\node[vertex, fill=white] (1) at (270/7:1) {};\node[vertex, fill=white] (2) at (-90/7:1) {};\node[vertex, fill=white] (3) at (-450/7:1) {};\node[vertex, fill=white] (4) at (-810/7:1) {};\node[vertex, fill=white] (5) at (-1170/7:1) {};\node[vertex, fill=white] (6) at (-1530/7:1) {};\draw[edge] (0) -- (1);\draw[edge] (1) -- (2);\draw[edge] (2) -- (3);\draw[edge] (3) -- (4);\draw[edge] (4) -- (5);\draw[edge] (5) -- (6);\draw[edge] (6) -- (0);
    \end{tikzpicture}
\end{subfigure}
\begin{subfigure}[b]{0.08\textwidth}
    \centering
    \begin{tikzpicture}[thick, scale=0.4]
    \tikzstyle{vertex}=[circle, draw, fill=white, inner sep=0mm, minimum size=1.5mm, line width=0.5pt]
    \tikzstyle{edge}=[thin]
    \node[vertex, fill=black] (0) at (90:1) {};\node[vertex, fill=white] (1) at (270/7:1) {};\node[vertex, fill=white] (2) at (-90/7:1) {};\node[vertex, fill=black] (3) at (-450/7:1) {};\node[vertex, fill=white] (4) at (-810/7:1) {};\node[vertex, fill=white] (5) at (-1170/7:1) {};\node[vertex, fill=white] (6) at (-1530/7:1) {};\draw[edge] (0) -- (1);\draw[edge] (1) -- (2);\draw[edge] (2) -- (3);\draw[edge] (3) -- (4);\draw[edge] (4) -- (5);\draw[edge] (5) -- (6);\draw[edge] (6) -- (0);
    \end{tikzpicture}
\end{subfigure}
\begin{subfigure}[b]{0.08\textwidth}
    \centering
    \begin{tikzpicture}[thick, scale=0.4]
    \tikzstyle{vertex}=[circle, draw, fill=white, inner sep=0mm, minimum size=1.5mm, line width=0.5pt]
    \tikzstyle{edge}=[thin]
    \node[vertex, fill=black] (0) at (90:1) {};\node[vertex, fill=black] (1) at (270/7:1) {};\node[vertex, fill=white] (2) at (-90/7:1) {};\node[vertex, fill=white] (3) at (-450/7:1) {};\node[vertex, fill=black] (4) at (-810/7:1) {};\node[vertex, fill=white] (5) at (-1170/7:1) {};\node[vertex, fill=white] (6) at (-1530/7:1) {};\draw[edge] (0) -- (1);\draw[edge] (1) -- (2);\draw[edge] (2) -- (3);\draw[edge] (3) -- (4);\draw[edge] (4) -- (5);\draw[edge] (5) -- (6);\draw[edge] (6) -- (0);
    \end{tikzpicture}
\end{subfigure}
\begin{subfigure}[b]{0.08\textwidth}
    \centering
    \begin{tikzpicture}[thick, scale=0.4]
    \tikzstyle{vertex}=[circle, draw, fill=white, inner sep=0mm, minimum size=1.5mm, line width=0.5pt]
    \tikzstyle{edge}=[thin]
    \node[vertex, fill=black] (0) at (90:1) {};\node[vertex, fill=white] (1) at (270/7:1) {};\node[vertex, fill=black] (2) at (-90/7:1) {};\node[vertex, fill=white] (3) at (-450/7:1) {};\node[vertex, fill=black] (4) at (-810/7:1) {};\node[vertex, fill=white] (5) at (-1170/7:1) {};\node[vertex, fill=white] (6) at (-1530/7:1) {};\draw[edge] (0) -- (1);\draw[edge] (1) -- (2);\draw[edge] (2) -- (3);\draw[edge] (3) -- (4);\draw[edge] (4) -- (5);\draw[edge] (5) -- (6);\draw[edge] (6) -- (0);
    \end{tikzpicture}
\end{subfigure}
\begin{subfigure}[b]{0.08\textwidth}
    \centering
    \begin{tikzpicture}[thick, scale=0.4]
    \tikzstyle{vertex}=[circle, draw, fill=white, inner sep=0mm, minimum size=1.5mm, line width=0.5pt]
    \tikzstyle{edge}=[thin]
    \node[vertex, fill=black] (0) at (90:1) {};\node[vertex, fill=black] (1) at (270/7:1) {};\node[vertex, fill=white] (2) at (-90/7:1) {};\node[vertex, fill=black] (3) at (-450/7:1) {};\node[vertex, fill=black] (4) at (-810/7:1) {};\node[vertex, fill=white] (5) at (-1170/7:1) {};\node[vertex, fill=white] (6) at (-1530/7:1) {};\draw[edge] (0) -- (1);\draw[edge] (1) -- (2);\draw[edge] (2) -- (3);\draw[edge] (3) -- (4);\draw[edge] (4) -- (5);\draw[edge] (5) -- (6);\draw[edge] (6) -- (0);
    \end{tikzpicture}
\end{subfigure}
\begin{subfigure}[b]{0.08\textwidth}
    \centering
    \begin{tikzpicture}[thick, scale=0.4]
    \tikzstyle{vertex}=[circle, draw, fill=white, inner sep=0mm, minimum size=1.5mm, line width=0.5pt]
    \tikzstyle{edge}=[thin]
    \node[vertex, fill=black] (0) at (90:1) {};\node[vertex, fill=black] (1) at (270/7:1) {};\node[vertex, fill=white] (2) at (-90/7:1) {};\node[vertex, fill=black] (3) at (-450/7:1) {};\node[vertex, fill=white] (4) at (-810/7:1) {};\node[vertex, fill=black] (5) at (-1170/7:1) {};\node[vertex, fill=white] (6) at (-1530/7:1) {};\draw[edge] (0) -- (1);\draw[edge] (1) -- (2);\draw[edge] (2) -- (3);\draw[edge] (3) -- (4);\draw[edge] (4) -- (5);\draw[edge] (5) -- (6);\draw[edge] (6) -- (0);
    \end{tikzpicture}
\end{subfigure}
\begin{subfigure}[b]{0.08\textwidth}
    \centering
    \begin{tikzpicture}[thick, scale=0.4]
    \tikzstyle{vertex}=[circle, draw, fill=white, inner sep=0mm, minimum size=1.5mm, line width=0.5pt]
    \tikzstyle{edge}=[thin]
    \node[vertex, fill=black] (0) at (90:1) {};\node[vertex, fill=black] (1) at (270/7:1) {};\node[vertex, fill=black] (2) at (-90/7:1) {};\node[vertex, fill=white] (3) at (-450/7:1) {};\node[vertex, fill=black] (4) at (-810/7:1) {};\node[vertex, fill=black] (5) at (-1170/7:1) {};\node[vertex, fill=white] (6) at (-1530/7:1) {};\draw[edge] (0) -- (1);\draw[edge] (1) -- (2);\draw[edge] (2) -- (3);\draw[edge] (3) -- (4);\draw[edge] (4) -- (5);\draw[edge] (5) -- (6);\draw[edge] (6) -- (0);
    \end{tikzpicture}
\end{subfigure}
\begin{subfigure}[b]{0.08\textwidth}
    \centering
    \begin{tikzpicture}[thick, scale=0.4]
    \tikzstyle{vertex}=[circle, draw, fill=white, inner sep=0mm, minimum size=1.5mm, line width=0.5pt]
    \tikzstyle{edge}=[thin]
    \node[vertex, fill=black] (0) at (90:1) {};\node[vertex, fill=black] (1) at (270/7:1) {};\node[vertex, fill=black] (2) at (-90/7:1) {};\node[vertex, fill=black] (3) at (-450/7:1) {};\node[vertex, fill=black] (4) at (-810/7:1) {};\node[vertex, fill=black] (5) at (-1170/7:1) {};\node[vertex, fill=white] (6) at (-1530/7:1) {};\draw[edge] (0) -- (1);\draw[edge] (1) -- (2);\draw[edge] (2) -- (3);\draw[edge] (3) -- (4);\draw[edge] (4) -- (5);\draw[edge] (5) -- (6);\draw[edge] (6) -- (0);
    \end{tikzpicture}
\end{subfigure}
\begin{subfigure}[b]{0.08\textwidth}\centering\begin{tikzpicture}[thick, scale=0.4]\tikzstyle{vertex}=[circle, draw, fill=white, inner sep=0mm, minimum size=1.5mm, line width=0.5pt]\tikzstyle{edge}=[thin]\node[vertex, fill=black] (0) at (90:1) {};\node[vertex, fill=black] (1) at (270/7:1) {};\node[vertex, fill=black] (2) at (-90/7:1) {};\node[vertex, fill=black] (3) at (-450/7:1) {};\node[vertex, fill=black] (4) at (-810/7:1) {};\node[vertex, fill=black] (5) at (-1170/7:1) {};\node[vertex, fill=black] (6) at (-1530/7:1) {};\draw[edge] (0) -- (1);\draw[edge] (1) -- (2);\draw[edge] (2) -- (3);\draw[edge] (3) -- (4);\draw[edge] (4) -- (5);\draw[edge] (5) -- (6);\draw[edge] (6) -- (0);\end{tikzpicture}\end{subfigure}
        \caption{\small All Wiener index maximizers on $C_7$ up to rotations and reflections.}
        \label{figure_WI_max_c7}
    \end{figure}

\begin{figure}
\centering
\begin{subfigure}[b]{0.08\textwidth}
    \centering
    \begin{tikzpicture}[thick, scale=0.4]
    \tikzstyle{vertex}=[circle, draw, fill=white, inner sep=0mm, minimum size=1.5mm, line width=0.5pt]
    \tikzstyle{edge}=[thin]
    \node[vertex, fill=white] (0) at (90:1) {};\node[vertex, fill=white] (1) at (45:1) {};\node[vertex, fill=white] (2) at (0:1) {};\node[vertex, fill=white] (3) at (-45:1) {};\node[vertex, fill=white] (4) at (-90:1) {};\node[vertex, fill=white] (5) at (-135:1) {};\node[vertex, fill=white] (6) at (-180:1) {};\node[vertex, fill=white] (7) at (-225:1) {};\draw[edge] (0) -- (1);\draw[edge] (1) -- (2);\draw[edge] (2) -- (3);\draw[edge] (3) -- (4);\draw[edge] (4) -- (5);\draw[edge] (5) -- (6);\draw[edge] (6) -- (7);\draw[edge] (7) -- (0);
    \end{tikzpicture}
\end{subfigure}
\begin{subfigure}[b]{0.08\textwidth}
    \centering
    \begin{tikzpicture}[thick, scale=0.4]
    \tikzstyle{vertex}=[circle, draw, fill=white, inner sep=0mm, minimum size=1.5mm, line width=0.5pt]
    \tikzstyle{edge}=[thin]
    \node[vertex, fill=black] (0) at (90:1) {};\node[vertex, fill=white] (1) at (45:1) {};\node[vertex, fill=white] (2) at (0:1) {};\node[vertex, fill=white] (3) at (-45:1) {};\node[vertex, fill=white] (4) at (-90:1) {};\node[vertex, fill=white] (5) at (-135:1) {};\node[vertex, fill=white] (6) at (-180:1) {};\node[vertex, fill=white] (7) at (-225:1) {};\draw[edge] (0) -- (1);\draw[edge] (1) -- (2);\draw[edge] (2) -- (3);\draw[edge] (3) -- (4);\draw[edge] (4) -- (5);\draw[edge] (5) -- (6);\draw[edge] (6) -- (7);\draw[edge] (7) -- (0);
    \end{tikzpicture}
\end{subfigure}
\begin{subfigure}[b]{0.08\textwidth}
    \centering
    \begin{tikzpicture}[thick, scale=0.4]
    \tikzstyle{vertex}=[circle, draw, fill=white, inner sep=0mm, minimum size=1.5mm, line width=0.5pt]
    \tikzstyle{edge}=[thin]
    \node[vertex, fill=black] (0) at (90:1) {};\node[vertex, fill=white] (1) at (45:1) {};\node[vertex, fill=white] (2) at (0:1) {};\node[vertex, fill=white] (3) at (-45:1) {};\node[vertex, fill=black] (4) at (-90:1) {};\node[vertex, fill=white] (5) at (-135:1) {};\node[vertex, fill=white] (6) at (-180:1) {};\node[vertex, fill=white] (7) at (-225:1) {};\draw[edge] (0) -- (1);\draw[edge] (1) -- (2);\draw[edge] (2) -- (3);\draw[edge] (3) -- (4);\draw[edge] (4) -- (5);\draw[edge] (5) -- (6);\draw[edge] (6) -- (7);\draw[edge] (7) -- (0);
    \end{tikzpicture}
\end{subfigure}
\begin{subfigure}[b]{0.08\textwidth}
    \centering
    \begin{tikzpicture}[thick, scale=0.4]
    \tikzstyle{vertex}=[circle, draw, fill=white, inner sep=0mm, minimum size=1.5mm, line width=0.5pt]
    \tikzstyle{edge}=[thin]
    \node[vertex, fill=black] (0) at (90:1) {};\node[vertex, fill=black] (1) at (45:1) {};\node[vertex, fill=white] (2) at (0:1) {};\node[vertex, fill=white] (3) at (-45:1) {};\node[vertex, fill=black] (4) at (-90:1) {};\node[vertex, fill=white] (5) at (-135:1) {};\node[vertex, fill=white] (6) at (-180:1) {};\node[vertex, fill=white] (7) at (-225:1) {};\draw[edge] (0) -- (1);\draw[edge] (1) -- (2);\draw[edge] (2) -- (3);\draw[edge] (3) -- (4);\draw[edge] (4) -- (5);\draw[edge] (5) -- (6);\draw[edge] (6) -- (7);\draw[edge] (7) -- (0);
    \end{tikzpicture}
\end{subfigure}
\begin{subfigure}[b]{0.08\textwidth}
    \centering
    \begin{tikzpicture}[thick, scale=0.4]
    \tikzstyle{vertex}=[circle, draw, fill=white, inner sep=0mm, minimum size=1.5mm, line width=0.5pt]
    \tikzstyle{edge}=[thin]
    \node[vertex, fill=black] (0) at (90:1) {};\node[vertex, fill=white] (1) at (45:1) {};\node[vertex, fill=black] (2) at (0:1) {};\node[vertex, fill=white] (3) at (-45:1) {};\node[vertex, fill=black] (4) at (-90:1) {};\node[vertex, fill=white] (5) at (-135:1) {};\node[vertex, fill=white] (6) at (-180:1) {};\node[vertex, fill=white] (7) at (-225:1) {};\draw[edge] (0) -- (1);\draw[edge] (1) -- (2);\draw[edge] (2) -- (3);\draw[edge] (3) -- (4);\draw[edge] (4) -- (5);\draw[edge] (5) -- (6);\draw[edge] (6) -- (7);\draw[edge] (7) -- (0);
    \end{tikzpicture}
\end{subfigure}
\begin{subfigure}[b]{0.08\textwidth}
    \centering
    \begin{tikzpicture}[thick, scale=0.4]
    \tikzstyle{vertex}=[circle, draw, fill=white, inner sep=0mm, minimum size=1.5mm, line width=0.5pt]
    \tikzstyle{edge}=[thin]
    \node[vertex, fill=black] (0) at (90:1) {};\node[vertex, fill=white] (1) at (45:1) {};\node[vertex, fill=black] (2) at (0:1) {};\node[vertex, fill=white] (3) at (-45:1) {};\node[vertex, fill=white] (4) at (-90:1) {};\node[vertex, fill=black] (5) at (-135:1) {};\node[vertex, fill=white] (6) at (-180:1) {};\node[vertex, fill=white] (7) at (-225:1) {};\draw[edge] (0) -- (1);\draw[edge] (1) -- (2);\draw[edge] (2) -- (3);\draw[edge] (3) -- (4);\draw[edge] (4) -- (5);\draw[edge] (5) -- (6);\draw[edge] (6) -- (7);\draw[edge] (7) -- (0);
    \end{tikzpicture}
\end{subfigure}
\begin{subfigure}[b]{0.08\textwidth}
    \centering
    \begin{tikzpicture}[thick, scale=0.4]
    \tikzstyle{vertex}=[circle, draw, fill=white, inner sep=0mm, minimum size=1.5mm, line width=0.5pt]
    \tikzstyle{edge}=[thin]
    \node[vertex, fill=black] (0) at (90:1) {};\node[vertex, fill=black] (1) at (45:1) {};\node[vertex, fill=white] (2) at (0:1) {};\node[vertex, fill=white] (3) at (-45:1) {};\node[vertex, fill=black] (4) at (-90:1) {};\node[vertex, fill=black] (5) at (-135:1) {};\node[vertex, fill=white] (6) at (-180:1) {};\node[vertex, fill=white] (7) at (-225:1) {};\draw[edge] (0) -- (1);\draw[edge] (1) -- (2);\draw[edge] (2) -- (3);\draw[edge] (3) -- (4);\draw[edge] (4) -- (5);\draw[edge] (5) -- (6);\draw[edge] (6) -- (7);\draw[edge] (7) -- (0);
    \end{tikzpicture}
\end{subfigure}
\begin{subfigure}[b]{0.08\textwidth}
    \centering
    \begin{tikzpicture}[thick, scale=0.4]
    \tikzstyle{vertex}=[circle, draw, fill=white, inner sep=0mm, minimum size=1.5mm, line width=0.5pt]
    \tikzstyle{edge}=[thin]
    \node[vertex, fill=black] (0) at (90:1) {};\node[vertex, fill=white] (1) at (45:1) {};\node[vertex, fill=black] (2) at (0:1) {};\node[vertex, fill=white] (3) at (-45:1) {};\node[vertex, fill=black] (4) at (-90:1) {};\node[vertex, fill=white] (5) at (-135:1) {};\node[vertex, fill=black] (6) at (-180:1) {};\node[vertex, fill=white] (7) at (-225:1) {};\draw[edge] (0) -- (1);\draw[edge] (1) -- (2);\draw[edge] (2) -- (3);\draw[edge] (3) -- (4);\draw[edge] (4) -- (5);\draw[edge] (5) -- (6);\draw[edge] (6) -- (7);\draw[edge] (7) -- (0);
    \end{tikzpicture}
\end{subfigure}
\begin{subfigure}[b]{0.08\textwidth}
    \centering
    \begin{tikzpicture}[thick, scale=0.4]
    \tikzstyle{vertex}=[circle, draw, fill=white, inner sep=0mm, minimum size=1.5mm, line width=0.5pt]
    \tikzstyle{edge}=[thin]
    \node[vertex, fill=black] (0) at (90:1) {};\node[vertex, fill=black] (1) at (45:1) {};\node[vertex, fill=black] (2) at (0:1) {};\node[vertex, fill=white] (3) at (-45:1) {};\node[vertex, fill=black] (4) at (-90:1) {};\node[vertex, fill=black] (5) at (-135:1) {};\node[vertex, fill=white] (6) at (-180:1) {};\node[vertex, fill=white] (7) at (-225:1) {};\draw[edge] (0) -- (1);\draw[edge] (1) -- (2);\draw[edge] (2) -- (3);\draw[edge] (3) -- (4);\draw[edge] (4) -- (5);\draw[edge] (5) -- (6);\draw[edge] (6) -- (7);\draw[edge] (7) -- (0);
    \end{tikzpicture}
\end{subfigure}
\begin{subfigure}[b]{0.08\textwidth}
    \centering
    \begin{tikzpicture}[thick, scale=0.4]
    \tikzstyle{vertex}=[circle, draw, fill=white, inner sep=0mm, minimum size=1.5mm, line width=0.5pt]
    \tikzstyle{edge}=[thin]
    \node[vertex, fill=black] (0) at (90:1) {};\node[vertex, fill=black] (1) at (45:1) {};\node[vertex, fill=black] (2) at (0:1) {};\node[vertex, fill=white] (3) at (-45:1) {};\node[vertex, fill=black] (4) at (-90:1) {};\node[vertex, fill=white] (5) at (-135:1) {};\node[vertex, fill=black] (6) at (-180:1) {};\node[vertex, fill=white] (7) at (-225:1) {};\draw[edge] (0) -- (1);\draw[edge] (1) -- (2);\draw[edge] (2) -- (3);\draw[edge] (3) -- (4);\draw[edge] (4) -- (5);\draw[edge] (5) -- (6);\draw[edge] (6) -- (7);\draw[edge] (7) -- (0);
    \end{tikzpicture}
\end{subfigure}

\vspace{2mm}

\begin{subfigure}[b]{0.08\textwidth}
    \centering
    \begin{tikzpicture}[thick, scale=0.4]
    \tikzstyle{vertex}=[circle, draw, fill=white, inner sep=0mm, minimum size=1.5mm, line width=0.5pt]
    \tikzstyle{edge}=[thin]
    \node[vertex, fill=black] (0) at (90:1) {};\node[vertex, fill=black] (1) at (45:1) {};\node[vertex, fill=white] (2) at (0:1) {};\node[vertex, fill=black] (3) at (-45:1) {};\node[vertex, fill=black] (4) at (-90:1) {};\node[vertex, fill=white] (5) at (-135:1) {};\node[vertex, fill=black] (6) at (-180:1) {};\node[vertex, fill=white] (7) at (-225:1) {};\draw[edge] (0) -- (1);\draw[edge] (1) -- (2);\draw[edge] (2) -- (3);\draw[edge] (3) -- (4);\draw[edge] (4) -- (5);\draw[edge] (5) -- (6);\draw[edge] (6) -- (7);\draw[edge] (7) -- (0);
    \end{tikzpicture}
\end{subfigure}
\begin{subfigure}[b]{0.08\textwidth}
    \centering
    \begin{tikzpicture}[thick, scale=0.4]
    \tikzstyle{vertex}=[circle, draw, fill=white, inner sep=0mm, minimum size=1.5mm, line width=0.5pt]
    \tikzstyle{edge}=[thin]
    \node[vertex, fill=black] (0) at (90:1) {};\node[vertex, fill=black] (1) at (45:1) {};\node[vertex, fill=black] (2) at (0:1) {};\node[vertex, fill=white] (3) at (-45:1) {};\node[vertex, fill=black] (4) at (-90:1) {};\node[vertex, fill=black] (5) at (-135:1) {};\node[vertex, fill=black] (6) at (-180:1) {};\node[vertex, fill=white] (7) at (-225:1) {};\draw[edge] (0) -- (1);\draw[edge] (1) -- (2);\draw[edge] (2) -- (3);\draw[edge] (3) -- (4);\draw[edge] (4) -- (5);\draw[edge] (5) -- (6);\draw[edge] (6) -- (7);\draw[edge] (7) -- (0);
    \end{tikzpicture}
\end{subfigure}
\begin{subfigure}[b]{0.08\textwidth}
    \centering
    \begin{tikzpicture}[thick, scale=0.4]
    \tikzstyle{vertex}=[circle, draw, fill=white, inner sep=0mm, minimum size=1.5mm, line width=0.5pt]
    \tikzstyle{edge}=[thin]
    \node[vertex, fill=black] (0) at (90:1) {};\node[vertex, fill=black] (1) at (45:1) {};\node[vertex, fill=black] (2) at (0:1) {};\node[vertex, fill=black] (3) at (-45:1) {};\node[vertex, fill=black] (4) at (-90:1) {};\node[vertex, fill=black] (5) at (-135:1) {};\node[vertex, fill=black] (6) at (-180:1) {};\node[vertex, fill=white] (7) at (-225:1) {};\draw[edge] (0) -- (1);\draw[edge] (1) -- (2);\draw[edge] (2) -- (3);\draw[edge] (3) -- (4);\draw[edge] (4) -- (5);\draw[edge] (5) -- (6);\draw[edge] (6) -- (7);\draw[edge] (7) -- (0);
    \end{tikzpicture}
\end{subfigure}
\begin{subfigure}[b]{0.08\textwidth}
    \centering
    \begin{tikzpicture}[thick, scale=0.4]
    \tikzstyle{vertex}=[circle, draw, fill=white, inner sep=0mm, minimum size=1.5mm, line width=0.5pt]
    \tikzstyle{edge}=[thin]
    \node[vertex, fill=black] (0) at (90:1) {};\node[vertex, fill=black] (1) at (45:1) {};\node[vertex, fill=black] (2) at (0:1) {};\node[vertex, fill=black] (3) at (-45:1) {};\node[vertex, fill=black] (4) at (-90:1) {};\node[vertex, fill=black] (5) at (-135:1) {};\node[vertex, fill=black] (6) at (-180:1) {};\node[vertex, fill=black] (7) at (-225:1) {};\draw[edge] (0) -- (1);\draw[edge] (1) -- (2);\draw[edge] (2) -- (3);\draw[edge] (3) -- (4);\draw[edge] (4) -- (5);\draw[edge] (5) -- (6);\draw[edge] (6) -- (7);\draw[edge] (7) -- (0);
    \end{tikzpicture}
\end{subfigure}
\caption{\small All Wiener index maximizers on $C_8$ up to rotations and reflections.}\label{figure_WI_max_c8}
\end{figure}

Next, let us prove that (\ref{theorem_w_all_spec}) $\implies$ (\ref{theorem_w_max}). Suppose that for all integers $k$ with $1\leq k< \frac{m}{2}$ we have $\sigma_k(A)=\sigma^*_k(A)$ and, in case $m$ is even, we have $\supp(\sigma_{\frac{m}{2}}(A))=\left\{\floor{\frac{n}{2}}\right\}$. Then $W(A)=W(J^0_{n,m})$. We must show that $A$ is a maximizer of $W$. Let $B\subseteq V(C_n)$ with $|B|=m$ be a maximizer of $W$. Since we already know (\ref{theorem_w_max}) $\implies$ (\ref{theorem_w_all_spec}), it follows that for each integer $k$ with $1\leq k<\frac{m}{2}$ we have $\sigma_k(B)=\sigma^*_k(B)$ and if $m$ is even then $\supp(\sigma_{\frac{m}{2}}(B))=\left\{\floor{\frac{n}{2}}\right\}$. Hence $W(B)=W(J^0_{n,m})=W(A)$ and therefore $A$ is a maximizer of $W$.
\end{proof}

\begin{corollary}\label{corollary_ascent_on_cycles}
Suppose $A$ is a set of vertices in $C_n$ where $2\leq A=m\leq n$. Then the set $\ascent(C_n,W,A)$ is a maximizer of $W$.
\end{corollary}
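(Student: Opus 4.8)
The plan is to reduce the corollary to two ingredients: the correctness of the local-search algorithm $\ascent$, and the equivalence of local and global maximizers of $W$ on $C_n$ established in Theorem \ref{theorem_w}. First I would verify that $\ascent(C_n,W,A)$ halts and returns a local maximizer of $W$ of cardinality $m$. Since every perturbation of a set of vertices has the same cardinality as that set, each $X$ that the algorithm assigns satisfies $|X|=|A|=m$; thus the algorithm only ever visits members of $\binom{V(C_n)}{m}$, of which there are finitely many. Moreover, whenever the algorithm updates $X$ in its ascending analogue of step (3), it replaces $X$ by a perturbation of strictly larger $W$-value, so the sequence of sets visited has strictly increasing $W$-values; hence no set is visited twice, and the process terminates after at most $\binom{n}{m}$ iterations. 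The halting condition is precisely that no perturbation of the returned set $X$ has strictly larger $W$-value, which is the statement that $X$ is a local maximizer of $W$ on $C_n$.

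It then remains to invoke Theorem \ref{theorem_w}. The returned set $X=\ascent(C_n,W,A)$ satisfies $2\le |X|=m\le n$ and is a local maximizer of $W$ on $C_n$, so condition (\ref{theorem_w_local}) of that theorem holds; by the equivalence (\ref{theorem_w_local}) $\iff$ (\ref{theorem_w_max}), condition (\ref{theorem_w_max}) holds as well, i.e., $X$ is a global maximizer of $W$. (The degenerate case $m=n$ is immediate, since then $V(C_n)$ is the unique set of its cardinality and is trivially a maximizer.) This completes the argument.

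The substantive mathematical work---the fact that local maximizers of $W$ on a cycle cannot be improved globally---is entirely contained in Theorem \ref{theorem_w}, so I expect no genuine obstacle to remain here. The only points requiring care are bookkeeping: confirming that perturbations preserve cardinality (so the search stays within $\binom{V(C_n)}{m}$) and that the strict increase of $W$ at each update rules out cycling and thereby guarantees termination in finitely many steps. A completely parallel argument, using Proposition \ref{proposition_wiener_path} in place of Theorem \ref{theorem_w}, yields Corollary \ref{corollary_ascent_on_paths}.
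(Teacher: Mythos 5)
Your proposal is correct and is precisely the argument the paper intends: the corollary is stated without proof as an immediate consequence of Theorem \ref{theorem_w}, the point being that $\ascent$ terminates (each update strictly increases $W$, and there are only finitely many sets in $\binom{V(C_n)}{m}$) at a local maximizer, which by the equivalence (\ref{theorem_w_max}) $\iff$ (\ref{theorem_w_local}) is a global maximizer. Your explicit bookkeeping on termination and the degenerate case $m=n$ (where $\pert_{C_n}(A)=\emptyset$ and the algorithm returns $V(C_n)$ immediately) is a careful filling-in of details the paper leaves tacit, not a different route.
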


Using either Theorem \ref{theorem_w}(\ref{theorem_w_top_spectra}) or Theorem \ref{theorem_w}(\ref{theorem_w_all_spec}), it is straightforward to verify the following.

\begin{corollary} \label{corollary_relationship}
Suppose $A$ is a set of vertices in $C_n$. If $A$ is maximally even then $A$ is a maximizer of $W$. However, not every maximizer of $W$ on $C_n$ is maximally even.
\end{corollary}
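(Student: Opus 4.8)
The plan is to prove each implication by reducing to the spectral characterization of Wiener maximizers in Theorem \ref{theorem_w}, specifically via condition (\ref{theorem_w_top_spectra}), and then to settle the second assertion with one explicit small counterexample. For the first assertion we may assume $m\geq 2$, since for $|A|\leq 1$ we have $W(A)=0$ and every such set trivially maximizes $W$.

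First I would record the elementary telescoping identity that for any $A=\{a_0,\ldots,a_{m-1}\}\subseteq V(C_n)$, enumerated in increasing order, and any $k$ with $1\leq k\leq m-1$, one has $\sum\sigma^*_k(A)=kn$. Indeed, each clockwise distance $d^*(a_i,a_{i+k})$ is the sum of the $k$ consecutive clockwise gaps $d^*(a_{i+j},a_{i+j+1})$, and summing over all $m$ ordered pairs of span $k$ counts each of the $m$ gaps exactly $k$ times, while these gaps sum to $n$. Now suppose $A$ is maximally even. By Definition \ref{definition_me}, $\supp(\sigma^*_k(A))$ consists of one or two consecutive integers, and the multiset $\sigma^*_k(A)$ has $m$ elements summing to $kn$; hence Lemma \ref{lemma_consecutive} yields $\supp(\sigma^*_k(A))=\{\floor{kn/m},\ceil{kn/m}\}$ for every such $k$.

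The heart of the argument is to control the top span $t=\floor{m/2}$. When $m$ is odd, $t=(m-1)/2<m/2$, so $t/m<1/2$ and $\frac{n}{2}-\frac{tn}{m}=\frac{n(m-2t)}{2m}\geq\frac{n}{2m}\geq\frac12$ because $n\geq m$; thus $tn/m\leq (n-1)/2$, and since these clockwise distances are at least $t\geq 1$, this gives $\supp(\sigma^*_t(A))=\{\floor{tn/m},\ceil{tn/m}\}\subseteq\{1,\ldots,\floor{n/2}\}$, which is exactly condition (\ref{theorem_w_top_spectra}) of Theorem \ref{theorem_w}. When $m$ is even, $t=m/2$ gives $tn/m=n/2$, so $\supp(\sigma^*_t(A))=\{\floor{n/2},\ceil{n/2}\}$, again matching (\ref{theorem_w_top_spectra}). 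In either case Theorem \ref{theorem_w} shows that $A$ is a maximizer of $W$. I expect the inequality $\ceil{tn/m}\leq\floor{n/2}$, together with its reliance on the hypothesis $n\geq m$, to be the only genuinely delicate point.

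For the second assertion it suffices to exhibit a single maximizer that is not maximally even. Take $n=7$ and $A=\{0,1,4\}$ (which appears, up to rotation and reflection, in Figure \ref{figure_WI_max_c7}), so that $m=3$ and $t=1$. Here $\sigma^*_1(A)=[\,1,3,3\,]$, whose support $\{1,3\}\subseteq\{1,2,3\}=\{1,\ldots,\floor{7/2}\}$, so $A$ is a maximizer of $W$ by Theorem \ref{theorem_w}(\ref{theorem_w_top_spectra}); one checks directly that $W(A)=7=W(\{0,2,4\})$. However, $\supp(\sigma^*_1(A))=\{1,3\}$ consists of two integers that are not consecutive, so it is neither a single integer nor two consecutive integers, and therefore $A$ fails Definition \ref{definition_me} and is not maximally even. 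This completes the plan.
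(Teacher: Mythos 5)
Your proof is correct and follows essentially the same route as the paper, which simply asserts that the corollary is ``straightforward to verify'' from Theorem \ref{theorem_w}(\ref{theorem_w_top_spectra}) or (\ref{theorem_w_all_spec}): you verify condition (\ref{theorem_w_top_spectra}) for a maximally even set via the telescoping identity $\sum\sigma^*_k(A)=kn$ together with Lemma \ref{lemma_consecutive}, and your counterexample $A=\{0,1,4\}$ in $C_7$ is one of the non-maximally-even maximizers the paper points to in Figure \ref{figure_WI_max_c7}. The details you supply---in particular the inequality $\ceil{tn/m}\leq\floor{n/2}$ for odd $m$ using $n\geq m$---are sound and merely fill in what the paper leaves implicit.
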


Finally, let us prove a characterization of the maximizers and local maximizers of $W$ on $C_n$ using the balanced conditions in Definition \ref{definition_balanced}. We say that a set of vertices $X$ of a finite graph $G$ is \emph{connected} if the induced subgraph $G[X]$ is connected. A partition $\mathcal{P}$ of the vertices of a graph is called \emph{equitable} if for all blocks $P,Q\in\mathcal{P}$ the cardinalities $|P|$ and $|Q|$ differ by at most one. 

\begin{definition}\label{definition_balanced}
Suppose $G$ is a finite simple connected graph and $A$ is a set of vertices in $G$.
\begin{enumerate}
\item We say that $A$ is \emph{balanced in $G$} if for every equitable $2$-partition $\mathcal{P}$ of $V(G)$ with connected blocks, the cardinalities of $A$ intersected with the two blocks of $\P$ differ by at most one.
\item We say that $A$ is \emph{weakly balanced in $G$} if and only if for every equitable $2$-partition $\mathcal{P}$ of $V(G)$ with connected blocks, the cardinalities of $A$ intersected with the two blocks of $\P$ differ by at most two, and furthermore, whenever the cardinalities of $A$ intersected with the two blocks of such a $\P$ differ by exactly $2$, the block containing more points of $A$ must be the larger block, that is, $P,Q\in\mathcal{P}$ with $|A\cap Q|=|A\cap P|+2$ implies $|P|<|Q|$.
\end{enumerate}
\end{definition}

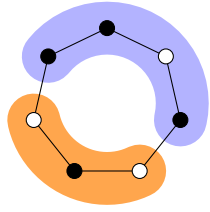
\begin{figure}
\begin{tikzpicture}[scale=0.9]
  \foreach \i in {0,2,4,6} {
    \coordinate (v\i) at ({90 - (360/7)*\i}:1);
    \node[draw, circle, fill=black, text=white, inner sep=2pt] at (v\i) {};
  }
  
  \foreach \i in {1,3,5} {
    \coordinate (v\i) at ({90 - (360/7)*\i}:1);
    \node[draw, circle, fill=white, inner sep=2pt] at (v\i) {};
  }
  
  \begin{pgfonlayer}{background}
  \draw [blue, line width=20pt, line cap=round, draw opacity=0.3, domain=90+(360/7):90-2*(360/7)] plot ({cos(\x)}, {sin(\x)});
  \draw [orange,line width=20pt, line cap=round, draw opacity=0.7,domain=90+2*(360/7):90+4*(360/7)] plot ({cos(\x)}, {sin(\x)});
    \foreach \i in {0,...,5} {
      \draw (v\i) -- (v\the\numexpr\i+1\relax);
    }
    \draw (v6) -- (v0); 
  \end{pgfonlayer}

\end{tikzpicture}
\caption{\small \small A maximizer of the Wiener index on $C_7$ with cardinality $4$ that is weakly balanced but not balanced.}
\end{figure}

In the following theorem we charaterize the Wiener index maximizers on cycle graphs in terms of balanced and weakly balanced sets; let us note that it is instructive to verify the following characterizations in the case of $C_7$ (see Figure \ref{figure_WI_max_c7}) and $C_8$ (see Figure \ref{figure_WI_max_c8}).

\begin{theorem}\label{theorem_balanced}
Suppose $A$ is a set of vertices in $C_n$ where  $2\leq|A|=m\leq n$. 
\begin{enumerate}
\item\label{item_thm_bal_balanced} Suppose $n$ is even or $m$ is odd. Then $A$ is a maximizer of $W$ on $C_n$ if and only if $A$ is balanced.
\item\label{item_thm_bal_wbnb} Suppose $n$ is odd and $m$ is even. Then $A$ is a maximizer of $W$ on $C_n$ if and only if $A$ is weakly balanced. Furthermore, in this case, every maximizer of the Wiener index on $C_n$ with cardinality $m$ is not balanced.
\item\label{item_thm_bal_weakly_balanced} $A$ is a maximizer of $W$ if and only if it is weakly balanced.
\end{enumerate}
\end{theorem}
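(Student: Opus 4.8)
The plan is to reduce the whole statement to the spectral characterization already established in Theorem \ref{theorem_w}, and then to translate the balanced conditions of Definition \ref{definition_balanced} into statements about the top multispectrum $\spec_{\floor{m/2}}(A)$. The first step is to reinterpret equitable connected $2$-partitions: on $C_n$ these are exactly the ways of cutting the cycle into two complementary arcs $P,Q$ of sizes $\floor{n/2}$ and $\ceil{n/2}$. Since $|A\cap P|+|A\cap Q|=m$, the lower bound on each block is determined by the upper bound on the other, so I will rewrite ``$A$ is balanced'' as the single arc-count bound that \emph{every arc of $\ceil{n/2}$ vertices contains at most $\ceil{m/2}$ points of $A$}, and rewrite ``$A$ is weakly balanced'', in the only nontrivial regime $n$ odd and $m$ even, as the bound that \emph{every arc of $\floor{n/2}$ vertices contains at most $m/2$ points of $A$}. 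The asymmetry between these two bounds (larger versus smaller arcs) is exactly what will later match the asymmetry in the weakly balanced clause.

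The engine of the proof is a duality between arc-counts and clockwise distances. Writing $g_i=\d(a_i,a_{i+1})$ for the clockwise gaps (so $\sum_i g_i=n$), the clockwise distance $\d(a_i,a_{i+k})$ is the sum of the $k$ consecutive gaps $g_i+\cdots+g_{i+k-1}$, so $\spec_k(A)$ is precisely the multiset of sums of $k$ consecutive gaps. Two elementary facts drive everything. First (arc--span duality): some arc of at most $L$ vertices contains at least $p$ points of $A$ if and only if some $p$ consecutive points of $A$ satisfy $\d(a_i,a_{i+p-1})\le L-1$, since the points an arc contains are consecutive in the cyclic order of $A$. Second (complementary block identity): $\d(a_i,a_{i+k})+\d(a_{i+k},a_i)=n$, where $\d(a_{i+k},a_i)$ is the sum of the complementary $m-k$ gaps; for $k=m/2$ this specializes to a self-symmetry $S\mapsto n-S$ of the $\tfrac m2$-gap-sum multiset. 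I plan to isolate both as standalone lemmas so that each case becomes a one-line application. Combining them with $p=\ceil{m/2}+1$ and $L=\ceil{n/2}$ converts ``some $\ceil{n/2}$-arc has more than $\ceil{m/2}$ points'' into ``some sum of $\floor{m/2}$ consecutive gaps exceeds $\floor{n/2}$'', which is exactly the negation of the top-spectrum condition of Theorem \ref{theorem_w}(\ref{theorem_w_top_spectra}).

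With this dictionary the three parts fall out by a short case analysis. Uniformly, the duality plus the complementary identity give: \emph{not balanced} $\iff$ some $\floor{m/2}$-gap-sum exceeds $\floor{n/2}$. When $m$ is odd this is literally the negation of Theorem \ref{theorem_w}(\ref{theorem_w_top_spectra}); when $n$ and $m$ are both even the target set $\{\floor{n/2},\ceil{n/2}\}$ collapses to $\{n/2\}$ and the self-symmetry turns ``some sum $\neq n/2$'' into ``some sum $>n/2$'', again matching. Hence in both regimes balanced $\iff$ maximizer, which is part (\ref{item_thm_bal_balanced}). For $n$ odd and $m$ even the relevant bound is on $\floor{n/2}$-arcs: the same computation, now using the self-symmetry with threshold $\ceil{n/2}$, shows weakly balanced $\iff \spec_{m/2}(A)\subseteq\{\floor{n/2},\ceil{n/2}\}$ $\iff$ maximizer. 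Moreover balanced is impossible here, since (as $m$ is even) it would force every arc of $\floor{n/2}$ vertices to contain exactly $m/2$ points; sliding the window by one vertex then makes $A$ invariant under a clockwise shift by $\floor{n/2}$, and since $2\cdot\floor{n/2}\equiv-1\pmod n$ gives $\gcd(\floor{n/2},n)=1$, this forces $A=\varnothing$ or $A=V(C_n)$, contradicting $2\le m\le n-1$; this is part (\ref{item_thm_bal_wbnb}). Finally part (\ref{item_thm_bal_weakly_balanced}) follows formally once one observes that balanced and weakly balanced coincide whenever $n$ is even or $m$ is odd: if $m$ is odd the two block-counts have opposite parity, so their difference is odd and can never equal $2$; and if $n$ is even the two blocks are equinumerous, so a difference of $2$ is forbidden by the ``larger block'' clause. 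In both situations weakly balanced collapses to balanced, so $A$ is a maximizer $\iff$ weakly balanced in every case.

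The step I expect to be the main obstacle is precisely the parity and off-by-one bookkeeping in the duality: one must track whether the governing arcs have $\floor{n/2}$ or $\ceil{n/2}$ vertices and whether the governing span is $\floor{m/2}$ or $\ceil{m/2}$, and in particular reconcile the one-sided weakly balanced clause (only the \emph{larger} block may carry the surplus point) with the one-sided target set $\{\floor{n/2},\ceil{n/2}\}$ that appears for even $m$ in Theorem \ref{theorem_w}(\ref{theorem_w_top_spectra}). Keeping the conventions for ``number of vertices in an arc'' versus ``clockwise distance between its endpoints'' consistent is where the argument is most error-prone, which is why I would factor out the arc--span duality and the complementary block identity as clean lemmas and then discharge each of the three parts as an immediate application.
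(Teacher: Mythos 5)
Your proposal is correct, and it reaches the theorem by a genuinely different route than the paper. Both arguments pivot on Theorem \ref{theorem_w}, but where the paper runs four separate parity cases, in each direction constructing an explicit equitable partition from a violating pair $(a_i,a_j)$ with $\spn_A(a_i,a_j)=\floor{\frac{m}{2}}$ and $\d(a_i,a_j)>\floor{\frac{n}{2}}$ (or extracting such a pair from an unbalanced partition, with careful bookkeeping like $\spn_A(a_{i_Q},a_{j_Q+2})\leq\floor{\frac{m}{2}}$ and $\d(a_{i_Q},a_{j_Q+2})\geq\ceil{\frac{n}{2}}+1$), you instead factor the whole comparison through a uniform dictionary: your arc--span duality plus the complementary identity $\d(a_i,a_{i+k})+\d(a_{i+k},a_i)=n$, together with your reformulations of balanced (every $\ceil{\frac{n}{2}}$-arc carries at most $\ceil{\frac{m}{2}}$ points) and weakly balanced (every $\floor{\frac{n}{2}}$-arc carries at most $\frac{m}{2}$ points), which I verified are faithful to Definition \ref{definition_balanced} in all parity regimes, the lower bounds on one block following from the upper bound on the complement via containment monotonicity of arcs. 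This collapses all cases to one computation, with the $k=\frac{m}{2}$ self-symmetry $S\mapsto n-S$ handling the even-$m$ collapse to $\{\frac{n}{2}\}$ and the one-sided weakly balanced clause. The sharpest divergence is the ``furthermore'' clause of part (\ref{item_thm_bal_wbnb}): the paper proves only that \emph{maximizers} fail to be balanced, using Corollary \ref{corollary_sum_of_spec_of_J_rep} and Lemma \ref{lemma_consecutive} to locate a span-$\frac{m}{2}$ pair at clockwise distance exactly $\floor{\frac{n}{2}}$ and then exhibiting a witnessing partition; you prove the strictly stronger fact that when $n$ is odd and $m$ is even, \emph{no} set with $2\leq m$ is balanced, since balance would force every $\floor{\frac{n}{2}}$-window to contain exactly $\frac{m}{2}$ points, whence window-sliding makes $A$ invariant under the shift by $\floor{\frac{n}{2}}$, and $2\floor{\frac{n}{2}}\equiv -1 \pmod{n}$ gives $\gcd\left(\floor{\frac{n}{2}},n\right)=1$, forcing $A\in\{\emptyset, V(C_n)\}$ (one can even shortcut this by counting: summing window counts over all $n$ windows gives $n\cdot\frac{m}{2}=\floor{\frac{n}{2}}m$, impossible for odd $n$). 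Your explicit parity argument that weakly balanced coincides with balanced when $n$ is even or $m$ is odd also fills in the step the paper dispatches with the terse remark that (\ref{item_thm_bal_weakly_balanced}) ``easily follows'' from (\ref{item_thm_bal_balanced}) and (\ref{item_thm_bal_wbnb}). The trade-off: the paper's constructions stay entirely inside its $\cw{A}{a_i}{a_j}$ formalism and need no new lemmas, while your two lemmas buy a shorter, less error-prone case analysis and a stronger conclusion; your own flagged worry about off-by-one conventions is legitimate but, on checking each of the four parity regimes, your thresholds all land correctly.
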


\begin{proof}


Fix a set $A=\{a_0,\ldots,a_{m-1}\}$ of vertices of $C_n$, where the elements of $A$ are enumerated in increasing order. Since balanced implies weakly balanced, (\ref{item_thm_bal_weakly_balanced}) easily follows from (\ref{item_thm_bal_balanced}) and (\ref{item_thm_bal_wbnb}).

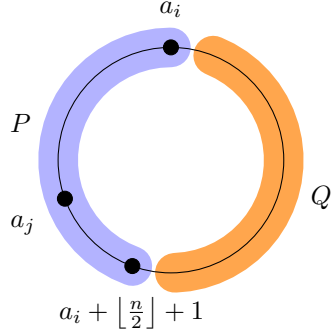
\begin{figure}
\begin{tikzpicture}[scale=0.7]
  \draw (0,0) circle (1.5);

  \coordinate (ai) at ({90}:1.5);
  \node[draw, label={[label distance=5pt]90:$a_i$}, circle, fill=black, inner sep=2pt] at (ai) {};

{[label distance=1cm]30:label}

  \coordinate (aj) at ({200}:1.5);
  \node[draw, label={[label distance=5pt]200:$a_j$}, circle, fill=black, inner sep=2pt] at (aj) {};

  \coordinate (aiplus) at ({250}:1.5);
  \node[draw, label={[label distance=5pt]268:$a_i+\floor{\frac{n}{2}}+1$}, circle, fill=black, inner sep=2pt] at (aiplus) {};

  \begin{pgfonlayer}{background}
  \draw [blue,line width=10pt, line cap=round, draw opacity=0.3,domain=250:90] plot ({1.5*cos(\x)}, {1.5*sin(\x)});
  \node[] at (-2,0.5) {$P$};
  \draw [orange,line width=10pt, line cap=round, draw opacity=0.7,domain=-88:68] plot ({1.5*cos(\x)}, {1.5*sin(\x)});
  \node[] at (2,-0.5) {$Q$};
  \end{pgfonlayer}

\end{tikzpicture}
\caption{\small The positions of $a_i$ and $a_j$.}\label{figure_positions_of_ai_and_aj}
\end{figure}

Let us prove (\ref{item_thm_bal_balanced}). First we consider the case in which $m$ is odd. For the reverse direction, assume $A$ is not a maximizer of $W$. Since Theorem \ref{theorem_w}(\ref{theorem_w_top_spectra}) fails, there exists an ordered pair $(a_i,a_j)$ of elements of $A$ such that $\spn_A(a_i,a_j)=\floor{\frac{m}{2}}$ and $\d(a_i,a_j)>\floor{\frac{n}{2}}$. It follows that $|\cw{A}{a_i}{a_j}|=\floor{\frac{m}{2}}+1$ and hence $|A\setminus \cw{A}{a_i}{a_j}|=\ceil{\frac{m}{2}}-1$. The set $P=\cw{C_n}{a_i+\floor{\frac{n}{2}}+1}{a_i}$ and its complement $Q=V(C_n)\setminus P$ form an equitable $2$-partition $\P$ with connected parts (see Figure \ref{figure_positions_of_ai_and_aj}). Clearly $a_i\in P$, and since $\d(a_i,a_j)>\floor{\frac{n}{2}}$, it follow that $a_j\in P$. Since $A\setminus \cw{A}{a_i}{a_j}\subseteq P$, $a_i,a_j\in A\cap P$ and $a_i,a_j\notin A\setminus \cw{A}{a_i}{a_j}$, it follows that
\[|A\cap P|\geq |A\setminus \cw{A}{a_i}{a_j}|+2=\ceil{\frac{m}{2}}+1.\] 
Since $a_i,a_j\in P\setminus Q$, it follows that 
\[|A\cap Q|\leq|\cw{A}{a_i}{a_j}|-2=\floor{\frac{m}{2}}-1.\]
Since $m$ is odd, $|A\cap P|$ and $|A\cap Q|$ differ by at least $3$. Thus $\P$ witnesses that $A$ is not balanced.

Still assuming $m$ is odd, let us prove the forward direction of (\ref{item_thm_bal_balanced}). Suppose $A$ is not balanced. Then there is an equitable $2$-partition $\P=\{P,Q\}$ of $V(C_n)$ with connected blocks $P$ and $Q$, where $|P|=\floor{\frac{n}{2}}\leq|Q|=\ceil{\frac{n}{2}}$ such that the cardinalities of $A$ intersected with the two blocks of $\P$ differ by at least $2$, and since $m$ is odd these cardinalities differ by at least $3$. Let $A\cap P=\cw{A}{a_{i_P}}{a_{j_P}}$ and $A\cap Q=\cw{A}{a_{i_Q}}{a_{j_Q}}$ where $i_P,j_P,i_Q,j_Q\in\{0,\ldots,m-1\}$. Then either $|A\cap Q|\leq\floor{\frac{m}{2}}-1$ or $|A\cap Q|\geq\ceil{\frac{m}{2}}+1$. Suppose $|A\cap Q|\leq\floor{\frac{m}{2}}-1$. Then $|\cw{A}{a_{i_Q}}{a_{j_Q+2}}|\leq \floor{\frac{m}{2}}+1$ and hence $\spn_A(a_{i_Q},a_{j_Q+2})\leq\floor{\frac{m}{2}}$. But $\d(a_{i_Q},a_{j_Q+2})\geq |Q|+1=\ceil{\frac{n}{2}}+1$, which implies $A$ is not a maximizer of $W$ since Theorem \ref{theorem_w}(\ref{theorem_w_all_spec}) is false. Now suppose $|A\cap Q|\geq\ceil{\frac{m}{2}}+1$. Then $|\cw{A}{a_{i_P}}{a_{j_P+2}}|=|A\cap P|+2\leq\floor{\frac{m}{2}}+1$, which implies $\spn_A(a_{i_P},a_{j_P+2})\leq\floor{\frac{m}{2}}$. But $\d(a_{i_P},a_{j_P+2})\geq\floor{\frac{n}{2}}+1$, and since $m$ is odd, Theorem \ref{theorem_w}(\ref{theorem_w_all_spec}) is false and hence $A$ is not a maximizer of $W$. This completes the proof of (\ref{item_thm_bal_balanced}) in case $m$ is odd.


To finish the proof of (\ref{item_thm_bal_balanced}), it remains to consider the case in which $n$ is even and $m$ is even. For the reverse direction, suppose $A$ is not a maximizer of $W$. Then, since Theorem \ref{theorem_w}(\ref{theorem_w_top_spectra}) is false, $\supp(\spec_{\frac{m}{2}}(A))\not\subseteq\left\{\floor{\frac{n}{2}},\ceil{\frac{n}{2}}\right\}=\left\{\frac{n}{2}\right\}$. It follows that there exists an ordered pair $(a_i,a_j)\in A\times A$ such that $\spn_A(a_i,a_j)=\frac{m}{2}$ and $d^*(a_i,a_j)>\frac{n}{2}$. Notice that $|\cw{A}{a_i}{a_j}|=\frac{m}{2}+1$ and $|A\setminus \cw{A}{a_i}{a_j}|=\frac{m}{2}-1$. Then the set $P=\cw{C_n}{a_i+\frac{n}{2}+1}{a_i}$ and its complement $Q$ form an equitable $2$-partition of $V(C_n)$ with connected parts. Furthermore, since $A\setminus \cw{A}{a_i}{a_j}\subseteq P$ and $a_i,a_j\in P$, it follows that $|P|\geq\frac{m}{2}+1$ and $|Q|\leq\frac{m}{2}-1$. Hence $A$ is not balanced.

Now we prove the forward direction of (\ref{item_thm_bal_balanced}) assuming $n$ and $m$ are even. Suppose $A$ is not balanced. This means that there is an equitable $2$-partition $\P=\{P,Q\}$ of $V(C_n)$ with connected blocks such that $|P|=|Q|=\frac{n}{2}$, and the quantities $|A\cap P|$ and $|A\cap P|$ differ by at least $2$. Without loss of generality, say $|A\cap P|<|A\cap Q|$. Then $|A\cap P|\leq \frac{m}{2}-1$ and $|A\cap Q|\geq\frac{m}{2}+1$. Since the blocks of $\P$ are connected, it follows that $A\cap P=\cw{A}{a_{i_P}}{a_{j_P}}$ and $A\cap Q=\cw{A}{a_{i_Q}}{a_{j_Q}}$ for some $i_P,j_P,i_Q,j_Q\in\{0,\ldots,m-1\}$. Since 
\[|\cw{A}{a_{j_Q}}{a_{i_Q}}|=|A\cap P|+2\leq\frac{m}{2}+1\]
we see that $\spn_A(a_{j_Q},a_{i_Q})\leq\frac{m}{2}$. But $\d(a_{j_Q},a_{i_Q})>\frac{n}{2}$, so $A$ is not a maximizer of $W$ because Theorem \ref{theorem_w}(\ref{theorem_w_top_spectra}) is false. This completes the proof of (\ref{item_thm_bal_balanced}) in the case where $m$ is odd.

Now, to prove (\ref{item_thm_bal_wbnb}), suppose $n$ is odd and $m$ is even. For the reverse direction of the equivalence in (\ref{item_thm_bal_wbnb}), suppose $A$ is not a maximizer of $W$. Then, since Theorem \ref{theorem_w}(\ref{theorem_w_top_spectra}) is false, there is a pair $(a_i,a_j)\in A\times A$ such that $\spn_A(a_i,a_j)=\frac{m}{2}$ and $\d(a_i,a_j)>\ceil{\frac{n}{2}}$. Then $|a_{[i,j]}|=\frac{m}{2}+1$ and $|A\setminus a_{[i,j]}|=\frac{m}{2}-1$. It follows that the set $P=\cw{C_n}{a_i+\ceil{\frac{n}{2}}+1}{a_i}$ and its complement $Q$ form an equitable $2$-partition of $V(C_n)$ with connected parts, where $|P|=\floor{\frac{n}{2}}<|Q|=\ceil{\frac{n}{2}}$. We have 
\[|A\cap P|\geq|A\setminus \cw{A}{a_i}{a_j}|+2=\frac{m}{2}+1\]
and
\[|A\cap Q|\leq|\cw{A}{a_i}{a_j}|-2=\frac{m}{2}-1.\]
So $|A\cap P|$ and $|A\cap Q|$ differ by at least $2$. If these cardinalities differ by $3$ or more, then clearly $A$ is not weakly balanced. If $|A\cap P|$ and $|A\cap Q|$ differ by exactly $2$, then $A$ is not weakly balanced because the smaller block $P$ of the partition contains more elements of $A$ than the larger block.

Still assuming $n$ is odd and $m$ is even, let us prove the forward direction of the equivalence in (\ref{item_thm_bal_wbnb}). Suppose $A$ is not weakly balanced. Thus, we may let $\mathcal{P}=\{P,Q\}$ be an equitable $2$-partition of $V(C_n)$ with connected blocks such that $|P|=\floor{\frac{n}{2}}<|Q|=\ceil{\frac{n}{2}}$ where the cardinalities $|A\cap P|$ and $|A\cap Q|$ differ by at least $3$ or these cardinalities differ by exactly $2$ and the smaller block of the partition contains more members of $A$, that is, $|A\cap P|=|A\cap Q|+2$. Let $A\cap P=\cw{A}{a_{i_P}}{a_{j_P}}$ and $A\cap Q=\cw{A}{a_{i_Q}}{a_{j_Q}}$ where $i_P,j_P,i_Q,j_Q\in\{0,\ldots,m-1\}$. Suppose $|A\cap Q|$ and $|A\cap P|$ differ by at least $3$. Since $m$ is even it follows that $|A\cap Q|$ and $|A\cap P|$ differ by at least $4$.  Then either $|A\cap Q|\leq \frac{m}{2}-2$ or $|A\cap Q|\geq\frac{m}{2}+2$. Suppose $|A\cap Q|\leq \frac{m}{2}-2$, then $\spn_A(a_{i_Q},a_{j_Q})\leq\frac{m}{2}-3$ and hence $\spn_A(a_{i_Q},a_{j_Q+3})\leq\frac{m}{2}$. But $\d(a_{i_Q},a_{j_Q+3})\geq|Q|-1+3=\ceil{\frac{n}{2}}+2$, which implies Theorem \ref{theorem_w}(\ref{theorem_w_all_spec}) is false and hence $A$ is not a maximizer of $W$. Now suppose $|A\cap Q|\geq\frac{m}{2}+2$. Then $|\cw{A}{a_{i_P}}{a_{j_P+3}}|=|A\cap P|+3\leq\frac{m}{2}+1$, which implies that $\spn_A(a_{i_P},a_{j_P+3})\leq\frac{m}{2}$, but $\d(a_{i_P},a_{j_P+3})\geq |P|-1+3=\floor{\frac{n}{2}}+2=\ceil{\frac{n}{2}}+1$. Thus Theorem \ref{theorem_w}(\ref{theorem_w_all_spec}) is false and hence $A$ is not a maximizer of $W$. Now suppose $|A\cap P|=|A\cap Q|+2$. Then $|A\cap P|=\frac{m}{2}+1$ and $|A\cap Q|=\frac{m}{2}-1$. Then $\spn_A(a_{i_P},a_{j_P})=\frac{m}{2}$, but $\d(a_{i_P},a_{j_P})\leq |P|-1=\floor{\frac{n}{2}}-1$, which implies that Theorem \ref{theorem_w}(\ref{theorem_w_top_spectra}) is false and hence $A$ is not a maximizer of $W$.

Finally, to prove the second statement in (\ref{item_thm_bal_wbnb}), suppose $A$ is any maximizer of the Wiener index on $C_n$ where $n$ is odd and $|A|=m$ is even. Let $m=2\ell$ where $\ell\in\Z$. In order to prove $A$ is not balanced we must find an equitable $2$-partition $\P=\{P,Q\}$ of $V(C_n)$ with connected blocks such that $|A\cap P|$ and $|A\cap Q|$ are not within one of each other. By Theorem \ref{theorem_w}(\ref{theorem_w_top_spectra}), we have
\[\supp(\sigma^*_\ell(A))\subseteq\left\{\floor{\frac{n}{2}},\ceil{\frac{n}{2}}\right\}.\]
By Corollary \ref{corollary_sum_of_spec_of_J_rep} we have $\sum\spec_\ell(A)=\ell n$, and since $\max(\spec_\ell(A))-\min(\spec_\ell(A))\leq 1$, it follows by Lemma \ref{lemma_consecutive} that there is a pair $(a_i,a_j)\in A\times A$ such that $\spn_A(a_i,a_j)=\ell$ and $d^*(a_i,a_j)=\floor{\frac{n}{2}}$. Let us take $Q=[a_i,a_j]$ and $P=V(C_n)\setminus [a_i,a_j]$. We have
\[|Q|=d^*(a_i,a_j)+1=\floor{\frac{n}{2}}+1\]
and 
\[|P|=n-|Q|=\ceil{\frac{n}{2}}-1,\]
which implies that $\mathcal{P}=\{P,Q\}$ is an equitable $2$-partition of $V(C_n)$ with connected blocks. Furthermore,
\[|A\cap Q|=|\cw{A}{a_i}{a_j}|=\spn_A(a_i,a_j)+1=\ell+1\]
and
\[|A\cap P|=|A\setminus \cw{A}{a_i}{a_j}|=2\ell-(\ell+1)=\ell-1.\]
Thus $|A\cap P|$ and $|A\cap Q|$ differ by $2$ and hence $A$ is not balanced.
\end{proof}

\section{Future Directions}\label{section_future}

Throughout the preceding sections, we have hinted toward directions for further study, and sticking points for extending our results.  In this short section, we collect a few ideas for future research in this area.

Several results in the current article provide characterizations of minimizers or maximizers of various real-valued functions of the form $E_g$, defined on the powerset of the vertex set of a finite simple connected graph, which allow us to list all minimizers or maximizers of $E_g$ on graphs in the class relatively quickly without resorting to the brute force algorithm. In what other cases are results like this possible?  When can we characterize the minimizers/maximizers of $E_g$ on a particular class of graphs, or even give an efficient algorithm for finding such sets?

\begin{question}\label{question_find_mins}
Given a class of graphs $\mathcal{G}$ and a function $g:[1,\infty)\to\R$, can one find a formula or an algorithm, faster than the brute force algorithm, for generating all minimizers or maximizers of $E_g$ among all graphs in $\mathcal{G}$?
\end{question}

Let us state a few specific instances of Question \ref{question_find_mins} that seem approachable, but which remain open.

\begin{question}\label{question_specific}
Suppose $g:[1,\infty)\to\R$ is a strictly decreasing strictly convex function (or just take $g(r)=\frac{1}{r}$). Which sets of vertices are minimizers $E_g$ on the cylinders $P_m\, \Box\, C_n$? On the toroidal graphs $C_{n_1}\, \Box\, \cdots \, \Box\, C_{n_k}$? Which sets of vertices are maximizers of the Wiener index $W$ on these graphs?
\end{question}

If $G$ is either a path $P_n$ or a cycle $C_n$ on $n$ vertices we showed that, on $G$, every local maximizer of $W$ is a maximizer of $W$ (see Proposition \ref{proposition_wiener_path} and Theorem \ref{theorem_w}). Thus, as noted at the end of Section \ref{section_intro} and in Corollary \ref{corollary_ascent_on_paths} and Corollary \ref{corollary_ascent_on_cycles}, on paths and cycles the algorithm $\ascent$ produces a maximizer of $W$. The corresponding problems involving paths, cycles and $E_g$ for functions other than $g(r)=r$ remain open.

\begin{question}\label{question_local}
Suppose $g:\left\{1,\ldots,\floor{\frac{n}{2}}\right\}\to\R$ is a strictly decreasing convex function (or for concreteness take $g(r)=\frac{1}{r}$) and let $G$ be either a path $P_n$ or a cycle $C_n$ on $n$ vertices. Is every local minimizer of $E_g$ on $G$ also a global minimizer of $E_g$? If $A$ is a set of vertices of $G$, does it follow that $\descent(G,E_g,A)$ is a minimizer of $E_g$?
\end{question}

The authors suspect that the notion of majorization, specifically Corollary \ref{corollary_maj_consistent_with_energy}, will be a useful tool for any future work on Question \ref{question_specific} and Question \ref{question_local}.


\end{document}